\documentclass[a4paper,12pt]{article}
\usepackage{latexsym,amsmath,amssymb}
\usepackage{ascmac} 
\usepackage{amscd}
\usepackage{amsfonts}
\usepackage{mathrsfs}
\usepackage{bm} 
\usepackage{enumerate}  
\usepackage{hhline} 
\usepackage{amsthm}
\usepackage{secdot}
\sectiondot{subsection}
\DeclareSymbolFont{lettersA}{U}{txmia}{m}{it}
 \DeclareMathSymbol{\Indi}{\mathord}{lettersA}{'211}

\makeatletter
    
    \@addtoreset{equation}{section}
  \makeatother


 \def\XXint#1#2#3{{\setbox0=\hbox{$#1{#2#3}{\int}$}
 \vcenter{\hbox{$#2#3$}}\kern-.5\wd0}}

\def\Swiech{{\accent"13S}wie{\hbox{\kern -0.21em\lower 0.79ex\hbox{$\textfont1=\scriptfont1\lhook$}}}ch}
\def\SWIECH{{\accent"13S}WIE{\hbox{\kern -0.26em\lower 0.77ex\hbox{$\textfont1=\scriptfont1\lhook$}}}CH}

\def\k{\kappa}
\def\P{{\cal P}}

\def\to{\rightarrow}
\def\R{{\mathbb R}}
\def\N{{\mathbb N}}
\def\l{\lambda}
\def\L{\Lambda}

\def\ti{\times}
\def\le{\left}
\def\ri{\right}
\def\O{\Omega}
\def\o{\omega}

\def\fr{\frac}
\def\p{\partial}
\def\e{\varepsilon}
\def\d{\delta}
\def\s{\sigma}

\def\b{\beta}
\def\a{\alpha}
\def\ol{\overline}

\def\ul{\underline}

\def\ra{\rangle}
\def\la{\langle}

\def\dist{\mathrm{dist}}

\newtheorem{thm}{Theorem}[section]
\newtheorem{dfn}[thm]{Definition}
\newtheorem{lem}[thm]{Lemma}
\newtheorem{prop}[thm]{Proposition}

\newtheorem{rem}[thm]{Remark}

\addcontentsline{toc}{section}{Acknowledgement}

\title{On $L^p$-viscosity solutions of parabolic bilateral
obstacle problems with unbounded ingredients}
\author{
Shota Tateyama\\
Department of Mathematics\\
Waseda University\\
Tokyo 169-8555\\
JAPAN\\
s.tateyama@kurenai.waseda.jp
}
\date{}

\begin{document}
\pagenumbering{roman}
\maketitle
\pagenumbering{arabic}

\begin{abstract}
The global equi-continuity estimate on $L^p$-viscosity solutions of parabolic bilateral obstacle problems with unbounded ingredients is established when obstacles are merely continuous. 
The existence of $L^p$-viscosity solutions is established via an approximation of given data. The local H\"older continuity estimate on the space derivative of $L^p$-viscosity solutions is shown when the obstacles belong to $C^{1, \beta}$, and $p>n+2$. 
\end{abstract}

{\it 
\begin{center}
\begin{tabular}{ll}Keywords:&fully nonlinear parabolic equations, viscosity 
solutions, \\& 
obstacle problems.\vspace{1mm}\\
2010 MSC:&49L25 
  , 35D40 
  ,35K10 
   , 35R35, 
  35B65. 
  \vspace{1mm}\\

\end{tabular}
\end{center}
} 

\section*{Acknowledgements} 
The author would like to thank Professor S. Koike for his helpful comments. 
The author is supported by Grant-in-Aid for Japan Society for Promotion Science Research Fellow 16J02399, in part by Grant-in-Aid for Scientific Research (No. JP16H06339, JP19H05599) of Japan Society for Promotion Science, and by Foundation of Research Fellows, The Mathematical Society of Japan.\section{Introduction}
  \label{sec:intro}
    

  %
  In this paper, we consider the following parabolic bilateral obstacle problem 
\begin{equation}\label{E0} 
\min\le\{\max\le\{u_t+F(x, t, Du, D^2u)-f, u-\psi\ri\}, u-\varphi\ri\}=0\quad\mbox{in }\O_T
\end{equation}
under the Cauchy-Dirichlet condition $u=g$ on $\p_p \O_T$. 
Here, $\O_T:=\O\ti(0, T]$ for a bounded domain $\O\subset\R^n$ and $T>0$, $F$ is at least a measurable function on $\O_T\ti\R^n\ti S^n$, and $f$, $\varphi$, $\psi$ and $g$ are given. We denote $S^n$ by the set of all $n\ti n$ real-valued symmetric matrices with the standard order, and set 
\[
S^n_{\l, \L}:=\le\{ X\in S^n : \l I\leq X \leq \L I\ri\}\quad\mbox{for }0<\l\leq \L.
\]
 Moreover, we denote the parabolic boundary of $\O_T$ by
\[\p_p \O_T:=\O\ti\{0\}\bigcup \p\O\ti[0, T).\]

To begin with, the theory of obstacle problems is motivated by numerous applications, e.g. in stochastic control theory, in economics, in mechanics, in mathematical physics or in mathematical biology. 

An existence theory for parabolic unilateral obstacle problems was first introduced by J.-L. Lions and G. Stampacchia in \cite{LS}. 
 In \cite{Bre}, regularity of solutions of parabolic unilateral obstacle problems was studied by H. Br\'ezis. 
Then, A. Friedman in \cite{F1, F2} considered stochastic games and studied regularity of solutions of bilateral obstacle problems. Afterwards, there appeared numerous researches on parabolic obstacle problems when $F$ are partial differential operators of divergence form. 
 We only refer to \cite{F3, KS, BL, F4, Bar, Rod} and references therein for the existence and regularity of solutions of parabolic obstacle problems and applications. 

In \cite{PS, Sh}, we considered unilateral obstacle problems for fully nonlinear uniformly parabolic operators under appropriate assumptions for applying the regularity theory of viscosity solutions in \cite{W, W2, W3}. It is natural to ask whether the results can be extended to bilateral obstacle problems. We refer to \cite{MR} for an accomplished overview of bilateral obstacle problems.


Although bilateral obstacle problems have been studied since the 1960s, some results on bilateral obstacle problems for non-divergence form operators only have been obtained very recently. In particular, L.F. Duque in \cite{D} showed interior H\"older estimates on viscosity solutions of bilateral obstacle problems for fully nonlinear uniformly parabolic operators with no variable coefficients, no first derivative terms and constant inhomogeneous terms when the obstacles are independent of time and H\"older continuous; 
\[
\begin{cases}
F(x, t, \xi, X)=F(X)\ \mbox{ for } (x, t, \xi, X)\in \O_T\ti\R^n\ti S^n,\\
 f\equiv C, \\
  \varphi(x, t)=\varphi(x),\  \psi(x, t)=\psi(x)\ \mbox{ for } (x, t)\in \O_T,  \\ 
  \varphi, \psi\in C^{\alpha}(\O_T)\ \mbox{ for }\alpha\in(0, 1).
\end{cases}
\]
Under the above hypotheses, in \cite{D}, we obtain the existence of viscosity solutions of $(\ref{E0})$ under the Cauchy-Dirichlet condition, and interior H\"older estimates on the space derivative when the obstacles are in $C^{1, \b}$ for $\b\in(0, 1)$ and separated. 
The corresponding results for elliptic problems are also established in \cite{D}. It was later extended for fully nonlinear uniformly elliptic equations with unbounded coefficients and inhomogeneous terms in \cite{KT}. We will give the definition of $C^{k, \alpha}$ for $k=0$, $1$ and $\alpha\in (0, 1)$ in Section 2.

This paper is the parabolic counterpart of \cite{KT} on fully nonlinear elliptic bilateral obstacle problems. Our aim in this paper is to extend results in \cite{D} when $F$ is a fully nonlinear uniformly parabolic operator. More precisely, under more general hypotheses than those in \cite{D}, we show the equi-continuity of $L^p$-viscosity solutions of (\ref{E0}) in $\ol \O_T$, the existence of $L^p$-viscosity solutions of $(\ref{E0})$, and their local H\"older continuity of space derivatives under additional assumptions. In \cite{D}, it is assumed that the obstacles are separated in order to obtain interior H\"older estimates on the space derivative of viscosity solutions of bilateral obstacle problems. In this paper, we remove this hypothesis (for elliptic case see the Appendix).

Because most of results on the equi-continuity and existence of $L^p$-viscosity solutions of $(\ref{E0})$ follow the same line of arguments as that of its elliptic counterpart used in \cite{KT}, we shall give the outline of proofs. 
As for the local H\"older continuity of space derivatives of $L^p$-viscosity solutions of $(\ref{E0})$, we cannot use our argument used in \cite{KT} because the domain, where the infimum is taken, differs from that of the $L^{\e_0}$ (quasi-) norm in the weak Harnack inequality, which arises in Proposition \ref{paraWeak}. Instead, we use a compactness-based technique developed in \cite{Sh}.

For any $p>0$ and $u:\O_T\to\R$, we denote the quasi-norm: 
$$
\| u\|_{L^p(\O_T)}=\le(\int_0^T\int_\O |u(x, t)|^p \, dx dt\ri)^{\fr1p}.$$
We note that $\|\cdot\|_{L^p(\O_T)}$ satisfies 
\begin{equation}\label{eq:tri}
\| u+v\|_{L^p(\O_T)}\leq C_p\le(\| u\|_{L^p(\O_T)}+\|v\|_{L^p(\O_T)}\ri) \quad \mbox{for some }C_p\geq 1. 
\end{equation}
Notice that we may choose $C_p=1$ when $p\geq 1$.

This paper is organized as follows: In Section 2, we recall the definition of $L^p$-viscosity solutions, basic properties and exhibit main results. Section 3 is devoted to the weak Harnack inequality both in $K\Subset\O_T$ and near $\p_p \O_T$, which yields the global equi-continuity of $L^p$-viscosity solutions. In Section 4, we establish the existence of $L^p$-viscosity solutions of (\ref{E0}) when the obstacles are only continuous under appropriate hypotheses. We obtain H\"older estimates on the space derivative of $L^p$-viscosity solutions in Section 5. 

\section{Preliminaries and main results}
For $(x, t)\in\R^{n+1}$ and $r>0$, we set 
\[B_r:=\{y\in \R^n : |y|<r\},\quad B_r(x):=x+B_r, \]
\[
Q_r:=B_r\ti(-r^2, 0],\quad\mbox{and}\quad Q_r(x, t):=(x, t)+Q_r.  
\]
For any measurable set $A\subset \R^{n+1}$, we denote by $|A|$ the $(n+1)$-dimensional Lebesgue measure of $A$. The parabolic distance is defined by 
\[
d((x, t), (y, s)):=\sqrt{|x-y|^2+|t-s|}.
\]
For $U$, $V\subset\R^{n+1}$, we define the distance between $U$ and $V$ by
\[
\dist(U, V):=\inf\le\{ d((x, t), (y, s)) : (x, t)\in U, (y, s)\in V\ri\}. 
\]
In what follows, $K\Subset \O_T$ means that $K\subset \O_T$ is a compact set satisfying $\dist(K, \p_p \O_T)>0$. 

We denote by $C^{2, 1}(\O_T)$ the space of functions $u\in C(\O_T)$ such that $u_t$, $\fr{\p u}{\p x_k}$, $\fr{\p^2u}{\p x_\ell\p x_k}\in C(\O_T)$ for $1\leq k, \ell\leq n$. For $1\leq p\leq \infty$, 
we denote by $W^{2, 1}_{p}(\O_T)$ the space of functions $u\in L^p(\O_T)$ such that $u_t$, $\fr{\p u}{\p x_k}$, $\fr{\p^2u}{\p x_\ell\p x_k}\in L^p(\O_T)$ for $1\leq k, \ell\leq n$.

By following notations from \cite{CKS}, for any $\tilde \O_T\subset\R^{n+1}$ such that $\O_T\subset\tilde\O_T\subset\ol \O_T$, and $\alpha\in(0, 1)$, the spaces $C^{0,\alpha}(\tilde \O_T)$ and $C^{1, \alpha}(\tilde\O_T)$ denote the set of all functions $u$ defined in $\tilde\O_T$ satisfying 
\[\|u\|_{C^{0,\alpha}(\tilde\O_T)}:=\|u\|_{L^\infty(\tilde\O_T)}+\sup_{\substack{(x, t), (y, s)\in \tilde\O_T\\ (x, t)\neq (y, s)}}\fr{|u(x, t)-u(y, s)|}{d((x, t), (y, s))^\alpha}<\infty
\]
and 
\begin{align*}
\|u\|_{C^{1, \alpha}(\tilde\O_T)}:=&\|u\|_{L^\infty(\tilde\O_T)}+\|Du\|_{L^\infty(\tilde\O_T)}\\
&+\sup_{\substack{(x, t), (y, s)\in \tilde\O_T\\ (x, t)\neq (y, s)}}\fr{|u(y, s)-u(x, t)-\langle Du(x, t), y-x\rangle|}{d((x, t), (y, s))^{1+\alpha}}<\infty,
\end{align*}
respectively. 
In what follows, we simply write $C^\alpha(\tilde\O_T)$ for $C^{0,\alpha}(\tilde\O_T)$.

We recall the definition of $L^p$-viscosity solutions of general parabolic partial differential equations (PDE for short) from \cite{CKS}: 
\begin{equation}\label{Gpara}
u_t+G(x, t, u, Du, D^2u)=0 \quad\mbox{in }\O_T,
\end{equation}
where $G:\O_T\ti\R\ti \R^n\ti S^n\to\R$ is a measurable function.    
\begin{dfn}
We say that a function $u\in C(\O_T)$ is an $L^p$-viscosity subsolution (resp., supersolution) of $(\ref{Gpara})$ when $u$ satisfies for any $\eta\in W^{2, 1}_{p}(\O_T)$, 
\[
\lim_{r\to 0} ess \inf_{Q_r(x_0, t_0)} \{\eta_t(x, t)+G(x, t, u(x, t), D\eta(x, t), D^2\eta(x, t))\}\leq 0
\]
\[
\le(resp.,\ \lim_{r\to 0} ess \sup_{Q_r(x_0, t_0)} \{\eta_t(x, t)+G(x, t, u(x, t), D\eta(x, t), D^2\eta(x, t))\}\geq 0\ri)
\]
provided that $u-\eta$ attains its local maximum (resp., minimum) at $(x_0, t_0)\in \O_T$. We say that $u$ is an $L^p$-viscosity solution of $(\ref{Gpara})$ when $u$ is an $L^p$-viscosity subsolution and an $L^p$-viscosity supersolution of $(\ref{Gpara})$. 
\end{dfn}

\begin{rem}
We will call $C$-viscosity subsolutions (resp., supersolutions, solutions) if we replace $W^{2,1}_{p}(\O_T)$ by $C^{2, 1}(\O_T)$ in the above when given $G$ is continuous. 
We refer to \cite{CIL} for the theory of $C$-viscosity solutions. 
\end{rem}

In order to present main results, we shall prepare some notations and hypotheses. 
Throughout this paper, under the hypothesis 
\begin{equation}\label{paraApq}
p_1<p\leq q, \quad q>n+2,
\end{equation}
where $p_1=p_1(n, \fr\L\l)\in [\fr{n+2}{2}, n+1)$ is the constant in \cite{Es}, we suppose 
\begin{equation}\label{paraAf}
f\in L^p(\O_T).
\end{equation}
The structure condition on $F$ is that there exist constants $0<\l\leq\L$ and 
\begin{equation}\label{paraAmu}
\mu\in L^q(\O_T) 
\end{equation}
such that 
\begin{equation}\label{paraAF}
\begin{cases}
&F(x, t, 0, O)=0 \quad\mbox{and}\\
&\P^-_{\l, \L}(X-Y)-\mu(x, t)|\xi-\zeta|\leq F(x, t, \xi, X)-F(x, t, \zeta, Y)\\
&\leq \P^+_{\l, \L}(X-Y)+\mu(x, t)|\xi-\zeta|
\end{cases}
\end{equation}
for $(x, t)\in \O_T$, $\xi$, $\zeta\in\R^n$, $X$, $Y\in S^n$, where $\P^\pm_{\l. \L}:S^n\to\R$ are defined by 
\[
\P^+_{\l, \L}(X):=\max\{-\mathrm{Tr}(AX) : A\in S^n_{\l, \L}\},\quad\mbox{and}\quad
\P^-_{\l, \L}(X):=-\P^+_{\l, \L}(-X)
\]
for $X\in S^n$. Because we fix $0<\l\leq \L$ in this paper, we shall write $\P^\pm:=\P^\pm_{\l, \L}$ for simplicity. 
 We note that (\ref{paraAF}) implies 
$\mu\geq0$ in $\O_T.$

For obstacles $\varphi$ and $\psi$, as compatibility conditions, we suppose that 
\begin{equation}\label{paraAob} 
  \varphi\leq \psi\quad\mbox{in }\O_T.
\end{equation}

\subsection{Basic properties}

We first give a direct consequence from the definition, which is a modification of Proposition 2.3 in \cite{KT}.  
 Hereafter, for $\a$, $\b\in\R$, we use 
 \[\a \vee \b:=\max\{\a, \b\},\, \a \wedge \b:=\min\{\a, \b\},\, \a^+:=\a \vee 0 \,\mbox{ and }\, \a^-:=(-\a)\vee 0.\]  
 
 \begin{prop}\label{prop:def}
Assume $(\ref{paraApq})$, $(\ref{paraAf})$, $(\ref{paraAmu})$, $(\ref{paraAF})$ and $(\ref{paraAob})$. Let $u\in C(\O_T)$ be an $L^p$-viscosity subsolution (resp., supersolution) of $(\ref{E0})$. 
Assume that an affine function $\ell(x):=a+\langle b, x\rangle$, 
where $(a, b)\in\R\ti\R^n$, 
satisfies that $\ell\geq\varphi$ $\le(\mbox{resp., } \ell\leq \psi\ri)$ in a relatively open subset $Q\subset \O_T.$
Then, 
$u\vee \ell$ $(\mbox{resp., } u\wedge \ell)$ is an $L^p$-viscosity subsolution (resp., supersolution) of 
\[
u_t+\P^-(D^2u)-\mu|Du|-f^+=0 \quad \mbox{in }Q
\]
\[\le(\mbox{resp., } u_t+\P^+(D^2u)+\mu|Du|+f^-=0\quad \mbox{in }Q\ri).
\]
\end{prop}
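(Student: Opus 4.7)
The plan is to run a two-case analysis based on whether $u$ strictly exceeds $\ell$ at the test point; the supersolution assertion follows by the symmetric argument (working with $u \wedge \ell$, the upper structure bound in (\ref{paraAF}), and $\ell \leq \psi$), so I focus on the subsolution case. Set $v := u \vee \ell$, fix a test function $\eta \in W^{2,1}_p(Q)$, and suppose $v - \eta$ attains a local maximum at some $(x_0, t_0) \in Q$.

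In the non-contact case $u(x_0, t_0) > \ell(x_0, t_0)$, continuity gives a neighborhood on which $v \equiv u$, so $u - \eta$ also has a local maximum at $(x_0, t_0)$. The hypothesis $\ell \geq \varphi$ combined with the strict inequality $u(x_0, t_0) > \ell(x_0, t_0)$ produces $\delta, r_0 > 0$ with $u - \varphi \geq \delta$ throughout $Q_{r_0}(x_0, t_0)$. I would then invoke the $L^p$-viscosity subsolution condition for $u$ with the test function $\eta$: on any sub-level set where the full nested $\min / \max$ expression dips below $\delta$, the term $u - \varphi \geq \delta$ cannot realize the outer minimum, so the $\min$ reduces to $\max\{\eta_t + F(\cdot, D\eta, D^2\eta) - f,\, u - \psi\}$, and in particular $\eta_t + F(\cdot, D\eta, D^2\eta) - f$ inherits the bound. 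A routine application of the one-sided structure bound $\P^-(D^2\eta) - \mu|D\eta| \leq F(\cdot, D\eta, D^2\eta)$ from (\ref{paraAF}) together with $-f^+ \leq -f$ then delivers the desired $L^p$-viscosity subsolution inequality for $\eta_t + \P^-(D^2\eta) - \mu|D\eta| - f^+$.

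In the contact case $u(x_0, t_0) \leq \ell(x_0, t_0)$ one has $v(x_0, t_0) = \ell(x_0, t_0)$, and since $v \geq \ell$ throughout $Q$ the chain $(\ell - \eta)(x, t) \leq (v - \eta)(x, t) \leq (v - \eta)(x_0, t_0) = (\ell - \eta)(x_0, t_0)$ holds near $(x_0, t_0)$; hence $\ell - \eta$ also attains a local maximum at $(x_0, t_0)$. Because $\ell$ is affine, $\ell_t \equiv 0$, $D\ell \equiv b$, $D^2\ell \equiv O$, and so the identity $\ell_t + \P^-(D^2\ell) - \mu|D\ell| - f^+ = -\mu|b| - f^+ \leq 0$ holds almost everywhere by $\mu \geq 0$ and $f^+ \geq 0$. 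The standard passage from $W^{2,1}_p$ strong subsolutions to $L^p$-viscosity subsolutions developed in \cite{CKS} then identifies $\ell$ as an $L^p$-viscosity subsolution of $w_t + \P^-(D^2 w) - \mu|Dw| - f^+ = 0$ in $Q$, and applying this subsolution inequality with $\eta$ at $(x_0, t_0)$ closes the case.

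The step I expect to be the main obstacle is the peeling in the non-contact case: the nested $\min / \max$ in the obstacle equation must be unpacked within the $L^p$-viscosity framework, where the inequality lives on approximate $\mathrm{ess\,inf}$ sub-level sets rather than pointwise. Concretely, one has to translate "$u - \varphi \geq \delta$ everywhere on $Q_r$" into "on the small subsets that witness the approximate $\mathrm{ess\,inf}$ of the overall expression, the $u - \varphi$ slot is not the minimizer, so the inner $\max$ is what is bounded, and hence so is $\eta_t + F - f$." Once this measure-theoretic translation is set up, the remaining work is purely algebraic through (\ref{paraAF}) and the identity $f = f^+ - f^-$.
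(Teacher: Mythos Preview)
Your proposal is correct and follows essentially the same two-case argument as the paper's proof: the paper (working on the supersolution side) likewise splits on whether $u(x_0,t_0)<\ell(x_0)$, peels the irrelevant obstacle term using $\ell\leq\psi$ to reduce to the inner $\min\{\eta_t+F-f,\,u-\varphi\}$, and in the other case simply notes that $\ell$ is an $L^p$-viscosity supersolution of $u_t+\mathcal P^+(D^2u)=0$. Your worry about the measure-theoretic ``peeling'' is somewhat overstated---once you have $u-\varphi\geq\delta$ on $Q_{r_0}(x_0,t_0)$, the pointwise inequality $\min\{\max\{A,B\},C\}\geq \max\{A,B\}\wedge\delta$ immediately forces $\operatorname*{ess\,inf}\max\{A,B\}\leq 0$, and then $A\leq\max\{A,B\}$ gives $\operatorname*{ess\,inf}A\leq 0$; the paper dispatches the analogous step in a single line.
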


\begin{proof}
We only give a proof for supersolutions. 

For $\eta\in W^{2,1}_{p}(Q)$, we suppose that $(u\wedge \ell)-\eta$ attains its local minimum at $(x_0, t_0)\in Q$. 

If $u(x_0, t_0)<\ell(x_0)$ holds, then $u-\eta$ attains its local minimum at $(x_0, t_0)\in Q$, and $u<\psi$ near $(x_0, t_0)$. 
Thus, by the definition, we have 
\[
\lim_{r\to 0} ess \sup_{Q_r(x_0, t_0)} \min\le\{\eta_t+F(x, t, D\eta, D^2\eta)-f, u-\varphi\ri\}\geq 0, 
\]
which gives the assertion by $(\ref{paraAF})$. 

When $u(x_0, t_0)\geq \ell(x_0)$, we only note that $\ell$ is an $L^p$-viscosity supersolution of $u_t+\P^+(D^2u)=0$ in $\O_T$.
\end{proof}

We shall introduce a scaled version of the weak Harnack inequality and a H\"older continuity in \cite{KST}. Modifying the result in \cite{KST} by an argument of the compactness, we state the next proposition as simple as possible for later use. See \cite{KST} for the original version. Hereafter, we use 
the notation 
\[ \alpha_0:=2-\fr{n+2}{p\wedge (n+2)}\in(0, 1].\]
\begin{prop} (cf. Theorem 3.4 in \cite{KST})\label{paraWeak}
Let $r>0$. Under the hypothesis $(\ref{paraApq})$, we assume $\mu\in L^q(Q_{2r})$. Then, there exist constants $\e_0>0$ and $C_0>0$ such that for any $f\in L^p(Q_{2r})$ and any nonnegative $L^p$-viscosity supersolution $u\in C(Q_{2r})$ of 
\begin{equation}\label{eq:P+}
u_t+\P^+(D^2u)+\mu|Du|-f=0\quad\mbox{in }Q_{2r}, 
\end{equation}
we have
\begin{equation}\label{pppWHI}
\|u\|_{L^{\e_0}(Q_r(0, -3r^2))}\leq C_0r^{\fr{n+2}{\e_0}}\le(\inf_{Q_r} u+r^{\alpha_0}\|f\|_{L^{p\wedge(n+2)}(Q_{2r})}\ri).
\end{equation}
Here, $\e_0$ and $C_0$ depend only on $n$, $\L$, $\l$, $p$, $q$ and $r^{1-\fr{n+2}{q}}\|\mu\|_{L^q(Q_{2r})}$. 
\end{prop}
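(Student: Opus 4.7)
The plan is to reduce the proposition to the unscaled version (i.e.\ the case $r=1$) of the weak Harnack inequality in \cite{KST}, by a parabolic rescaling that keeps the structural quantities scale-invariant. The key point is that the combination $r^{1-(n+2)/q}\|\mu\|_{L^q(Q_{2r})}$ appearing in the dependence of the constants is precisely the $L^q(Q_2)$-norm of the rescaled first-order coefficient, and analogously the factor $r^{\alpha_0}$ in front of $\|f\|_{L^{p\wedge(n+2)}}$ comes from the change of variables in the inhomogeneous term.

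Concretely, first I would define
\[
v(y,s):=u(ry,\,r^2 s)\qquad\text{for }(y,s)\in Q_2.
\]
A direct computation using $v_s=r^2u_t$, $Dv=rDu$, $D^2v=r^2D^2u$ and the $1$-homogeneity of $\mathcal P^+$ shows that $v\in C(Q_2)$ is a nonnegative $L^p$-viscosity supersolution of
\[
v_s+\mathcal P^+(D^2 v)+\tilde\mu(y,s)|Dv|-\tilde f(y,s)=0\quad\text{in }Q_2,
\]
where $\tilde\mu(y,s):=r\mu(ry,r^2 s)$ and $\tilde f(y,s):=r^2 f(ry,r^2 s)$. The change of variables in the Lebesgue integral yields
\[
\|\tilde\mu\|_{L^q(Q_2)}=r^{1-\frac{n+2}{q}}\|\mu\|_{L^q(Q_{2r})},\qquad
\|\tilde f\|_{L^{p\wedge(n+2)}(Q_2)}=r^{\alpha_0}\|f\|_{L^{p\wedge(n+2)}(Q_{2r})},
\]
with $\alpha_0=2-(n+2)/(p\wedge(n+2))$ as defined in the text.

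Second, I would apply Theorem 3.4 of \cite{KST} to $v$ on $Q_2$: there exist $\varepsilon_0>0$ and $C_0>0$, depending only on $n$, $\lambda$, $\Lambda$, $p$, $q$ and $\|\tilde\mu\|_{L^q(Q_2)}$, such that
\[
\|v\|_{L^{\varepsilon_0}(Q_1(0,-3))}\leq C_0\Bigl(\inf_{Q_1}v+\|\tilde f\|_{L^{p\wedge(n+2)}(Q_2)}\Bigr).
\]
Undoing the rescaling gives $\inf_{Q_1}v=\inf_{Q_r}u$ and $\|v\|_{L^{\varepsilon_0}(Q_1(0,-3))}=r^{-\frac{n+2}{\varepsilon_0}}\|u\|_{L^{\varepsilon_0}(Q_r(0,-3r^2))}$, and substituting the two norm identities above produces \eqref{pppWHI}. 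Since the dependencies of $\varepsilon_0$ and $C_0$ in \cite{KST} are only through the scale-invariant $L^q$-norm of the drift coefficient, they transfer to the claimed dependence on $r^{1-(n+2)/q}\|\mu\|_{L^q(Q_{2r})}$.

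The only nontrivial point is ensuring that the constants provided by \cite{KST} actually depend on $\|\tilde\mu\|_{L^q(Q_2)}$ only through an upper bound; this is the role of the "compactness" modification alluded to in the statement. One can argue as follows: if the conclusion failed, one could extract a sequence of pairs $(u_k,\tilde\mu_k,\tilde f_k)$ with $\|\tilde\mu_k\|_{L^q(Q_2)}$ uniformly bounded but for which \eqref{pppWHI} fails with constants $C_0=k$. Standard stability of $L^p$-viscosity supersolutions under $L^q$-convergence of the drift (\cite{CKS}) together with $C^\alpha$ interior estimates yields a subsequence converging to a limiting nonnegative supersolution of a problem to which Theorem 3.4 of \cite{KST} applies, contradicting the blow-up. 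This compactness step---specifically, verifying the stability needed to pass to the limit in the viscosity sense under weak assumptions on $\tilde\mu_k$---is where I would expect the main technical effort; the scaling itself is elementary.
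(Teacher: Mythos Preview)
Your proposal is correct and matches the paper's approach: the paper does not prove Proposition~\ref{paraWeak} at all but simply states it as a ``scaled version'' of Theorem~3.4 in \cite{KST}, noting only that a compactness argument is used to put it in this simplified form. Your rescaling $v(y,s)=u(ry,r^2s)$ and the resulting identities for $\|\tilde\mu\|_{L^q}$, $\|\tilde f\|_{L^{p\wedge(n+2)}}$, and $\|v\|_{L^{\varepsilon_0}}$ are exactly the computations that justify calling this a scaled version, and your closing paragraph on compactness is precisely the modification the paper alludes to in the sentence preceding the proposition.
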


We next recall how to derive a local H\"older estimate on $L^p$-viscosity solutions of parabolic extremal equations in order to show a key idea of this paper. The following proposition is a scaled version of Theorem 4.2 in \cite{KST}.  
\begin{prop}[cf. Theorem 4.2 in \cite{KST}]\label{prop:Holder}
Let $R>0$. Under $(\ref{paraApq})$, we assume $\mu\in L^q(Q_{2R})$. 
Then, there exist constants $K_1>0$ and $\hat{\alpha}\in(0, \alpha_0]$ such that for any $f\in L^p(Q_{2R})$, if $u\in C(Q_{2R})$ is an $L^p$-viscosity subsolution and an $L^p$-viscosity supersolution, respectively, of   
\[
u_t+\P^-(D^2u)-\mu|Du|-f=0 \quad\mbox{and}\quad u_t+\P^+(D^2u)+\mu|Du|-f=0
\]
in $Q_{2R}$, then it follows that 
\[
|u(x, t)-u(y, s)|\leq K_1\biggl(\fr{d((x, t), (y, s))}{R}\biggr)^{\hat\alpha}\le(\|u\|_{L^\infty(Q_{2R})}+R^{\alpha_0}\|f\|_{L^{p\wedge(n+2)}(Q_{2R})}\ri)
\]
for $(x, t), (y, s)\in Q_{R}$. 
Here, $K_1$ and $\hat\alpha$ depend only on $n, \L, \l, p, q$ and $R^{1-\fr{n+2}{q}}\|\mu\|_{L^q(Q_{2R})}$. 
\end{prop}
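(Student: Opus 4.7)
The plan is to reduce the estimate to its unit-scale version by a parabolic rescaling, obtain the Hölder continuity at unit scale as a consequence of Proposition \ref{paraWeak}, and then scale back. First, assuming the non-trivial case $\|u\|_{L^\infty(Q_{2R})} + R^{\alpha_0}\|f\|_{L^{p\wedge(n+2)}(Q_{2R})} > 0$, I would set $N$ equal to this quantity and define
\[
v(x,t) := \frac{1}{N}\, u(Rx, R^2 t)\qquad \text{on } Q_2.
\]
A direct chain-rule computation (using $v_t = (R^2/N)\, u_t$ and $D^2 v = (R^2/N)\, D^2 u$ evaluated at $(Rx, R^2 t)$, together with the positive $1$-homogeneity of $\P^\pm$) shows that $v$ is simultaneously an $L^p$-viscosity sub- and supersolution of the extremal equations on $Q_2$ with coefficients $\tilde\mu(x,t) := R\,\mu(Rx, R^2 t)$ and inhomogeneity $\tilde f(x,t) := (R^2/N)\, f(Rx, R^2 t)$. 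The change-of-variables formula yields $\|\tilde\mu\|_{L^q(Q_2)} = R^{1-(n+2)/q}\|\mu\|_{L^q(Q_{2R})}$, $\|v\|_{L^\infty(Q_2)} \leq 1$, and---precisely because $\alpha_0 = 2 - (n+2)/(p\wedge(n+2))$---$\|\tilde f\|_{L^{p\wedge(n+2)}(Q_2)} \leq 1$.

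Second, I would establish the unit-scale estimate
\[
|v(x,t) - v(y,s)| \leq K_1'\, d((x,t),(y,s))^{\hat\alpha}\bigl(\|v\|_{L^\infty(Q_2)} + \|\tilde f\|_{L^{p\wedge(n+2)}(Q_2)}\bigr)
\]
for $(x,t),(y,s) \in Q_1$, with $K_1' > 0$ and $\hat\alpha \in (0, \alpha_0]$ depending only on $n,\Lambda,\lambda,p,q$ and $\|\tilde\mu\|_{L^q(Q_2)}$. This is the standard parabolic oscillation-decay argument: on a geometric family of backward-shifted cylinders $r = 2^{-k}$, apply Proposition \ref{paraWeak} to both $\sup_{Q_{2r}} v - v$ and $v - \inf_{Q_{2r}} v$, which are nonnegative $L^p$-viscosity supersolutions of the $\P^+$-type equation (the sign reversal for the sub-solution branch uses $\P^-(X) = -\P^+(-X)$). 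Adding the two resulting $L^{\e_0}$ bounds via (\ref{eq:tri}) and dividing by the measure of the past cylinder yields an inequality of the form
\[
\osc_{Q_{r/2}} v \leq \gamma\, \osc_{Q_{2r}} v + C\, r^{\alpha_0}\,\|\tilde f\|_{L^{p\wedge(n+2)}(Q_2)}
\]
for some $\gamma \in (0,1)$; iterating in $k$ produces the claimed Hölder estimate. This is exactly Theorem 4.2 in \cite{KST}.

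Finally, I would scale back. For any $(x,t), (y,s) \in Q_R$ I apply the unit-scale estimate to $(x/R, t/R^2)$ and $(y/R, s/R^2) \in Q_1$; since $d((x/R, t/R^2), (y/R, s/R^2)) = d((x,t),(y,s))/R$, multiplying through by $N$ and relabeling gives exactly the desired inequality with $K_1$ a fixed multiple of $K_1'$. The one genuinely delicate point lies in the oscillation-decay step: Proposition \ref{paraWeak} couples the $L^{\e_0}$ norm on the \emph{past} cylinder $Q_r(0,-3r^2)$ with the infimum on the \emph{present} cylinder $Q_r$, so the iteration must be arranged on a nested family of time-shifted cylinders rather than centered ones, and this asymmetry is precisely what \cite{KST} handles carefully. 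Beyond that point the whole argument is bookkeeping: the three quantities $\|u\|_{L^\infty(Q_{2R})}$, $R^{1-(n+2)/q}\|\mu\|_{L^q(Q_{2R})}$, and $R^{\alpha_0}\|f\|_{L^{p\wedge(n+2)}(Q_{2R})}$ are exactly the parabolic-scale invariants under the map $u \mapsto v$, which is what makes the cited unit-scale result pass to a scale-free statement with no loss in the dependence structure of the constants.
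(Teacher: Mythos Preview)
Your proposal is correct and follows essentially the same oscillation-decay argument as the paper, with two cosmetic differences. First, the paper skips your normalization/rescaling step entirely, working directly at scale $R$: since Proposition~\ref{paraWeak} is already stated at arbitrary radius $r$ with the scale-invariant dependence on $r^{1-(n+2)/q}\|\mu\|_{L^q}$, there is nothing to gain by reducing to $R=1$ first. Second, your worry about arranging the iteration on ``time-shifted cylinders'' is overstated; the paper iterates on the standard nested family $Q_r(x,t)\subset Q_{2r}(x,t)$ and resolves the past/present mismatch in Proposition~\ref{paraWeak} by a one-line observation: $M_{2r}-m_{2r}$ is a constant, so its $L^{\e_0}$-norm over the past cylinder $Q_r(x,t-3r^2)\subset Q_{2r}(x,t)$ is just $(M_{2r}-m_{2r})|Q_r|^{1/\e_0}$, which is then bounded via (\ref{eq:tri}) by $\|U\|_{L^{\e_0}}+\|V\|_{L^{\e_0}}$ on that same set. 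The rest---getting $\omega(r)\le\theta_0\,\omega(2r)+Cr^{\alpha_0}\|f\|$ and invoking Lemma~8.23 of \cite{GilTru83}---matches your sketch exactly.
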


\begin{proof}
Fix $(x, t)\in Q_{R}$. 
For $0<s\leq R$, we set 
\[
M_{s}:=\sup_{Q_{s}(x, t)}u, \quad\mbox{and} \quad m_s:=\inf_{Q_{s}(x, t)} u.
\]
Now, for $0<r\leq \fr R2$, setting 
\[
U:=u-m_{2r}\geq 0, \quad\mbox{and}\quad V:=M_{2r}-u\geq 0\quad\mbox{in }Q_{2r}(x, t),
\]
we immediately see that $U$ and $V$ are $L^p$-viscosity supersolutions of (\ref{eq:P+}) in $Q_{2r}(x, t)$ with $f$ replaced by $-f^-$ and $-f^+$, respectively. Hence, in view of Proposition \ref{paraWeak}, we have 
\[
\|U\|_{L^{\e_0}(Q_r(x, t-3r^2))}\leq C_0 r^{\fr{n+2}{\e_0}}\le(\inf_{Q_r(x, t)} U+r^{\alpha_0}\|f\|_{L^{p\wedge(n+2)}(Q_{2r}(x, t))}\ri),
\]
\[
\|V\|_{L^{\e_0}(Q_r(x, t-3r^2))}\leq C_0 r^{\fr{n+2}{\e_0}}\le(\inf_{Q_r(x, t)} V+r^{\alpha_0}\|f\|_{L^{p\wedge(n+2)}(Q_{2r}(x, t))}\ri). 
\]
Therefore, by (\ref{eq:tri}), these inequalities imply 
\begin{align*}
M_{2r}-m_{2r}&=|Q_r|^{-\fr{1}{\e_0}}\|M_{2r}-m_{2r}\|_{L^{\e_0}(Q_r(x, t-3r^2))}\\
&\leq |Q_r|^{-\fr{1}{\e_0}}C_{\e_0}\le(\|V\|_{L^{\e_0}(Q_r(x, t-3r^2))}+\|U\|_{L^{\e_0}(Q_r(x, t-3r^2))}\ri)\\
&\leq C'_0\le(M_{2r}-M_r+m_r-m_{2r}+2r^{\alpha_0}\|f\|_{L^{p\wedge(n+2)}(Q_{2R})}\ri),
\end{align*}
where $C'_0:=|Q_1|^{-\fr{1}{\e_0}}C_{\e_0}C_0$. 
Thus, there exists $\theta_0\in(0, 1)$ such that 
\[
\o(r)\leq \theta_0\o(2r)+2r^{\alpha_0}\|f\|_{L^{p\wedge(n+2)}(Q_{2R})}, 
\]
where $\o(r):=M_r-m_r$. Hence, the standard argument (e.g. Lemma 8.23 in \cite{GilTru83}) implies that 
\[
|u(x, t)-u(y, s)|\leq K_1\biggl(\fr{d((x, t), (y, s))}{R}\biggr)^{\hat{\alpha}}\le(\|u\|_{L^\infty(Q_{2R})}+R^{\alpha_0}\|f\|_{L^{p\wedge(n+2)}(Q_{2R})}\ri)
\]
for some $K_1>0$ and $\hat{\alpha}\in(0, \alpha_0]$. 
\end{proof}

\begin{rem}
One of key ideas of this paper is a different choice of $M_s$ and $m_s$ in the above for the proof of Lemma 
	\ref{paralemlocal}.
	\end{rem}

We introduce the H\"older continuity of the space derivative for $C$-viscosity solutions of fully nonlinear uniformly parabolic PDE. 

\begin{prop}\label{prop:C1}(cf. Theorem 4.8 in \cite{W2}, Proposition 5.4 in \cite{CKS})
Assume that $F: S^n\to\R$ satisfies
\[\P^-(X-Y)\leq F(X)-F(Y)\leq\P^+(X-Y) \]
for $X, Y\in S^n$. Then, there exist constants $K_2>0$ and $\hat{\beta}\in(0, 1)$, depending only on $n$, $\L$ and $\l$, such that if $u\in C(Q_1)$ is a $C$-viscosity solution of $u_t+F(D^2u)=0$ in $Q_1$, then it follows that  
\[
\|u\|_{C^{1, \hat{\beta}}(\ol{Q}_{\fr12})}\leq K_2\|u\|_{L^\infty(Q_1)}. 
\]
\end{prop}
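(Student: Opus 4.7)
The plan is to apply the Caffarelli-type compactness-and-iteration scheme, adapted to the parabolic setting by Wang. Since the equation is translation-invariant in $(x,t)$, it suffices to establish the pointwise $C^{1,\hat\beta}$ estimate at $(0,0)$: after normalizing $\|u\|_{L^\infty(Q_1)}\leq 1$, I would produce $b\in\R^n$ with $|b|\leq K_2$ such that
\[
\sup_{Q_r}\bigl|u(x,t)-u(0,0)-\langle b, x\rangle\bigr|\leq K_2\, r^{1+\hat\beta}\quad\text{for all }r\in(0,1/2).
\]
Covering by translates then yields the full $C^{1,\hat\beta}(\ol{Q}_{1/2})$ bound.

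The core step is a flatness-improvement lemma: there exist universal constants $\mu\in(0,1/2)$, $\hat\beta\in(0,1)$, and $C_*>0$, depending only on $n$, $\lambda$, $\Lambda$, such that for every $C$-viscosity solution $u$ of $u_t+F(D^2u)=0$ in $Q_1$ with $\|u\|_{L^\infty(Q_1)}\leq 1$, there is an affine function $\ell(x)=a+\langle b, x\rangle$ with $|a|+|b|\leq C_*$ satisfying $\sup_{Q_\mu}|u-\ell|\leq\mu^{1+\hat\beta}$. One may assume $F(O)=0$ after replacing $F(X)$ by $F(X)-F(O)$ and $u$ by $u+F(O)\cdot t$. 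Given the lemma, the rescaling
\[
u_1(x,t):=\mu^{-(1+\hat\beta)}\bigl[u(\mu x,\mu^2 t)-\ell(\mu x)\bigr]
\]
again solves a constant-coefficient Pucci-type equation with $\|u_1\|_{L^\infty(Q_1)}\leq 1$, so the lemma applies inductively and produces affine approximations $\ell_k$ whose gradients form a Cauchy sequence converging to $Du(0,0)$, with geometric decay at rate $\mu^{k(1+\hat\beta)}$ on $Q_{\mu^k}$.

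The flatness-improvement lemma itself is proved by contradiction and compactness. Fix $\mu$ and $\hat\beta$ (to be chosen) and suppose the lemma fails. Then for every $k\in\N$ there exist a Pucci-type operator $F_k:S^n\to\R$ and a $C$-viscosity solution $u_k$ of $u_{k,t}+F_k(D^2u_k)=0$ in $Q_1$ with $\|u_k\|_{L^\infty(Q_1)}\leq 1$ such that no affine function $\ell$ with $|a|+|b|\leq k$ achieves $\sup_{Q_\mu}|u_k-\ell|\leq\mu^{1+\hat\beta}$. By Proposition \ref{prop:Holder}, the family $\{u_k\}$ is equicontinuous on $Q_{3/4}$, and the $F_k$ are uniformly Lipschitz on bounded subsets of $S^n$ by the Pucci bounds, so both families are precompact. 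Extracting subsequences, $u_k\to u_\infty$ locally uniformly and $F_k\to F_\infty$ locally uniformly, with $F_\infty$ still Pucci-type, and stability of $C$-viscosity solutions gives that $u_\infty$ solves $u_{\infty,t}+F_\infty(D^2u_\infty)=0$ in $Q_{3/4}$.

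The hard part is to show that $u_\infty$ is classically $C^{1,\hat\beta}$ at $(0,0)$ with a universal constant $M$, so that its tangent affine function $\ell_\infty$, with $|a|+|b|\leq M$, also approximates $u_k$ well for large $k$ by uniform convergence, contradicting the assumed property of $u_k$ once $k>M$. Since Proposition \ref{prop:Holder} only yields $C^{\hat\alpha}$, an independent a priori $C^{1,\hat\beta}$ bound for solutions of constant-coefficient fully nonlinear operators is required. This is the technical heart of Wang's argument: one smooths $F_\infty$ by inf-sup convolution and formally differentiates the regularized equation, so that each directional difference quotient of a smooth solution satisfies a linear uniformly parabolic equation with bounded measurable coefficients, to which Proposition \ref{prop:Holder} applies. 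Combined with a universal a priori Lipschitz estimate on $u_\infty$, obtained by barrier or sup-convolution arguments that exploit the translation invariance of the constant-coefficient operator, this produces the required regularity and closes the compactness argument.
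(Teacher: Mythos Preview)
The paper does not prove this proposition at all: it is quoted from the literature (Theorem~4.8 in \cite{W2} and Proposition~5.4 in \cite{CKS}) and used as a black box, so there is no ``paper's own proof'' to compare against. Your task was therefore only to reconstruct a plausible argument for a cited result.

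That said, your outline has a structural redundancy that borders on circularity. The compactness-and-iteration scheme you describe is the right machinery for \emph{variable-coefficient} operators (and is exactly what the paper deploys later, in Proposition~\ref{CKS}, to pass from $F(x,t,Du,D^2u)$ to a frozen-coefficient model). But here $F$ already depends only on $D^2u$: the equation is translation-invariant from the outset. Your limit problem $u_{\infty,t}+F_\infty(D^2u_\infty)=0$ is of \emph{exactly the same type} as the original, so the compactness step gains nothing, and the ``hard part'' you identify --- proving $C^{1,\hat\beta}$ for $u_\infty$ --- is the very statement you set out to prove.

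The actual content of Wang's proof is what you relegate to the last paragraph: because $F$ has no $(x,t)$-dependence, each spatial difference quotient $v_h(x,t)=(u(x+he,t)-u(x,t))/h$ is simultaneously an $L^p$-viscosity sub- and supersolution of the extremal equations $v_t+\P^\mp(D^2v)=0$, so Proposition~\ref{prop:Holder} applies directly to $v_h$. Bootstrapping the H\"older exponent (first $u\in C^{\hat\alpha}$, then $u\in C^{2\hat\alpha}$ or $C^{1,2\hat\alpha-1}$, and so on) yields Lipschitz continuity in $x$ after finitely many steps, and one more application gives $Du\in C^{\hat\beta}$ with $\hat\beta=\hat\alpha$. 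No compactness, no approximation of $F$, and no auxiliary limit equation is needed. Your sketch would be correct once the superfluous compactness wrapper is stripped away and the difference-quotient argument is promoted to the main line.
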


We finally give a reasonable property of $L^p$-viscosity solutions of (\ref{E0}), which will be often used without mentioning it. Because the proof follows by the arguments of the proof of Proposition 2.9 in \cite{KT}, we omit it. 

\begin{prop}
\label{prop:bounded} (cf. Proposition 2.9 in \cite{KT})
Under $(\ref{paraApq})$, $(\ref{paraAf})$, $(\ref{paraAmu})$, $(\ref{paraAF})$ and $(\ref{paraAob})$, we assume $\varphi$, $\psi\in C(\O_T)$. Then, for any $L^p$-viscosity subsolution (resp., supersolution) $u\in C(\O_T)$ of $(\ref{E0})$, we have 
\[
u\leq\psi \quad(\mbox{resp., } u\geq \varphi) \quad \mbox{in }\O_T. 
\]
\end{prop}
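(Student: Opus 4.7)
The plan is a proof by contradiction that mirrors the elliptic argument of Proposition 2.9 in \cite{KT}. I treat the subsolution statement $u \leq \psi$; the supersolution statement $u \geq \varphi$ is symmetric and will be sketched at the end. Suppose, for contradiction, that $u(x_0, t_0) > \psi(x_0, t_0)$ at some $(x_0, t_0) \in \O_T$. First I would use continuity of $u$, $\psi$, $\varphi$ together with (\ref{paraAob}) to fix $r > 0$ with $\ol Q \subset \O_T$ (where $Q := Q_r(x_0, t_0)$) and $c_0 > 0$ such that $u - \psi \geq c_0$ and $u - \varphi \geq c_0$ on $\ol Q$.

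The substantive step is to produce an admissible test function $\eta \in W^{2,1}_p(\O_T)$ for which $u - \eta$ attains a local maximum at some interior point of $Q$. Since $\psi$ is merely continuous, the naive $\eta := \psi + \sup_{\ol Q}(u - \psi)$ need not lie in $W^{2,1}_p$, so I would instead introduce the smooth penalty
$$\chi(x, t) := \fr{C}{r^2}\le(|x - x_0|^2 + (t_0 - t)\ri), \quad \mbox{with } C > \mathrm{osc}_{\ol Q}\, u.$$
Then $\chi \in C^\infty \subset W^{2, 1}_p(\O_T)$, $\chi(x_0, t_0) = 0$, and $\chi \geq C$ on the parabolic boundary $\p_p Q$, so the continuous function $u - \chi$ attains its maximum on the compact set $\ol Q$ at some interior point $(y, s) \in Q \subset \O_T$. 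Setting $\eta := \chi + (u - \chi)(y, s)$ produces a smooth test function with $u \leq \eta$ on $\ol Q$ and equality at $(y, s)$, so $u - \eta$ has a local maximum there.

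The $L^p$-viscosity subsolution definition applied to $\eta$ at $(y, s)$ then gives
$$\lim_{\rho \downarrow 0} \mathrm{ess\,inf}_{Q_\rho(y, s)} \min\le\{\max\le\{\eta_t + F(x, t, D\eta, D^2\eta) - f, u - \psi\ri\}, u - \varphi\ri\} \leq 0,$$
but for $\rho < \mathrm{dist}((y, s), \p_p Q)$ the cylinder $Q_\rho(y, s)$ sits inside $Q$, where the integrand is pointwise bounded below by $\min\{u - \psi, u - \varphi\} \geq c_0 > 0$; this is the contradiction. The supersolution case is identical up to signs: assume $u(x_0, t_0) < \varphi(x_0, t_0)$, shrink to a cylinder on which $u - \varphi \leq -c_0$ (hence $u - \psi \leq -c_0$), replace $\chi$ by $-\chi$ so that $u + \chi$ has an interior minimum at some $(y, s)$, and take $\eta := -\chi + (u + \chi)(y, s)$ as a test function from below; the bound $\min\{\max\{\cdot, u - \psi\}, u - \varphi\} \leq u - \varphi \leq -c_0$ then contradicts $\lim \mathrm{ess\,sup} \geq 0$. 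The only delicate point is the penalty construction, because a merely continuous $u$ cannot in general be touched from above or below by a $W^{2,1}_p$ function at a preassigned point; $\chi$ relocates the touching point to a nearby interior extremum, after which the obstacle structure makes the contradiction immediate without any quantitative input from $F$, $\mu$, or $f$.
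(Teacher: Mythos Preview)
Your argument is correct and is precisely the parabolic adaptation the paper has in mind: the paper omits the proof entirely, stating only that it ``follows by the arguments of the proof of Proposition 2.9 in \cite{KT},'' and your penalty construction with $\chi(x,t)=\tfrac{C}{r^2}(|x-x_0|^2+(t_0-t))$ is the natural parabolic version of that elliptic argument. There is nothing further to add.
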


\subsection{Main results}

For obstacles, under $(\ref{paraAob})$, we at least assume 
\begin{equation}\label{paraAconti}
\varphi, \psi\in C(\ol \O_T). 
\end{equation} 

In order to obtain the estimate near $\p_p\O_T$, we suppose the following condition on the shape of $\O$, which was introduced in \cite{BNV}: 
\begin{equation}\label{paraAboundary}
\le\{\begin{array}{l}
\mbox{There exist } R_0>0\mbox{ and } \Theta_0>0 \mbox{ such that }\\
|B_r(x)\setminus\O|\geq \Theta_0 r^n \mbox{ for } (x, r)\in \p\O\ti(0, R_0).
\end{array}\ri.
\end{equation}
For the Cauchy-Dirichlet datum, we suppose that 
\begin{equation}\label{paraCDdata}
  g\in C(\p_p \O_T),\quad\mbox{and}\quad\varphi\leq g\leq \psi\quad\mbox{on }\p_p \O_T. 
\end{equation}

We call a function $\o:[0, \infty)\to[0, \infty)$ a modulus of continuity if $\o$ is nondecreasing and continuous in $[0, \infty)$ such that $\o(0)=0$. 

Our first result is the global equi-continuity estimate on $L^p$-viscosity solutions of (\ref{E0}). We present a proof of the following theorem in Section 3. 

\begin{thm}\label{cor:Global}
Assume $(\ref{paraApq})$, $(\ref{paraAf})$, $(\ref{paraAmu})$, $(\ref{paraAF})$, $(\ref{paraAob})$, $(\ref{paraAconti})$, $(\ref{paraAboundary})$ and $(\ref{paraCDdata})$. Then, there exists a modulus of continuity $\omega_0$ such that if $u\in C(\ol \O_T)$ is an $L^p$-viscosity solution of $(\ref{E0})$ satisfying \begin{equation}\label{paraBC}
u=g \quad\mbox{on } \p_p \O_T,
\end{equation} then it follows that 
\[
|u(x, t)-u(y, s)|\leq \omega_0(d((x, t), (y, s)))\quad\mbox{for } (x, t), (y, s)\in \ol \O_T. 
\]
Moreover, if we assume that
\[
\varphi, \psi\in C^{\alpha_1}(\ol \O_T), \quad\mbox{and}\quad g\in C^{\alpha_1}(\p_p \O_T)\quad\mbox{for } \alpha_1\in(0, 1),
\]
then there exist $\alpha_2\in (0, \alpha_0\wedge\alpha_1]$ and $C>0$, independent of $u$, such that 
\[
|u(x, t)-u(y, s)|\leq Cd((x, t), (y, s))^{\alpha_2}\quad\mbox{for } (x, t), (y, s)\in \ol \O_T.
\]
\end{thm}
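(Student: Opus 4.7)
The plan is to adapt the elliptic argument of \cite{KT} to the parabolic setting by reducing the obstacle problem, via Proposition \ref{prop:def} applied with constant affine functions, to extremal inequalities to which Propositions \ref{paraWeak} and \ref{prop:Holder} apply, and then patching the resulting interior oscillation decay with a boundary barrier argument. Fix $(x_0, t_0) \in \O_T$ with $Q_{2r}(x_0, t_0) \Subset \O_T$, write $M_s := \sup_{Q_s(x_0, t_0)} u$ and $m_s := \inf_{Q_s(x_0, t_0)} u$, and introduce the constants $\Phi^* := \sup_{Q_{2r}(x_0, t_0)} \varphi$ and $\Psi_* := \inf_{Q_{2r}(x_0, t_0)} \psi$. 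Since $\Phi^* \geq \varphi$ and $\Psi_* \leq \psi$ in $Q_{2r}(x_0, t_0)$, Proposition \ref{prop:def} identifies $v := u \vee \Phi^*$ and $w := u \wedge \Psi_*$ as an $L^p$-viscosity subsolution of $u_t + \P^-(D^2 u) - \mu|Du| - f^+ = 0$ and an $L^p$-viscosity supersolution of $u_t + \P^+(D^2 u) + \mu|Du| + f^- = 0$ in $Q_{2r}(x_0, t_0)$, while Proposition \ref{prop:bounded} makes $V := M_{2r} - v \geq 0$ and $W := w - m_{2r} \geq 0$ admissible for the weak Harnack estimate.

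For the interior oscillation decay, carry out a three-case analysis. A direct pointwise check via the signs of $u - \Phi^*$ and $u - \Psi_*$ yields $0 \leq v - w \leq \osc_{Q_{2r}(x_0, t_0)} \varphi + \osc_{Q_{2r}(x_0, t_0)} \psi$. If $M_r < \Phi^*$, then $M_r - m_r \leq \Phi^* - \inf_{Q_{2r}} \varphi \leq \osc_{Q_{2r}} \varphi$ (using $u \geq \varphi$), and symmetrically $m_r > \Psi_*$ gives $M_r - m_r \leq \osc_{Q_{2r}} \psi$. Otherwise $\inf_{Q_r} V = M_{2r} - M_r$ and $\inf_{Q_r} W = m_r - m_{2r}$, so Proposition \ref{paraWeak} applied to $V$ and $W$, combined with the identity $V + W + (v - w) = M_{2r} - m_{2r}$ and the quasi-triangle inequality (\ref{eq:tri}), yields, for some universal $\theta_0 \in (0, 1)$,
\[
M_r - m_r \leq \theta_0 (M_{2r} - m_{2r}) + C r^{\alpha_0} \|f\|_{L^{p \wedge (n+2)}(\O_T)} + C\bigl(\osc_{Q_{2r}} \varphi + \osc_{Q_{2r}} \psi\bigr).
\]
Merging the three cases into a single recursion for $\omega(r) := \osc_{Q_r(x_0, t_0)} u$ and iterating via Lemma 8.23 in \cite{GilTru83} produces a local modulus of continuity depending only on $\dist((x_0, t_0), \p_p \O_T)$ and the universal data.

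For the boundary estimate, fix $(x_0, t_0) \in \p_p \O_T$ and build upper and lower barriers matching $g(x_0, t_0)$ at $(x_0, t_0)$: on the lateral portion $\p \O \ti [0, T)$ the measure-density condition (\ref{paraAboundary}) together with ABP-type arguments provides barriers for the extremal equations as in \cite{KT}, and on $\O \ti \{0\}$ standard tent-type functions suffice. Comparing these barriers against the truncated functions $v$ and $w$ from the previous step yields a boundary modulus $\omega_1$ depending only on $\omega_g, \omega_\varphi, \omega_\psi$ and the universal constants, and a triangle-inequality patching with the interior modulus delivers the global $\omega_0$ asserted. In the H\"older case $\varphi, \psi \in C^{\alpha_1}(\ol\O_T)$ and $g \in C^{\alpha_1}(\p_p\O_T)$, each defect term in both the interior recursion and the boundary barrier becomes $O(r^{\alpha_1})$, so the standard dyadic iteration gives the H\"older modulus with exponent $\alpha_2 \in (0, \alpha_0 \wedge \alpha_1]$.

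The principal obstacle is organizing the interior step so that the pointwise defect $v - w$ from the obstacle truncation, which destroys the identity $V + W = \text{const}$ used in the proof of Proposition \ref{prop:Holder}, is controlled quantitatively by $\osc \varphi + \osc \psi$, and then verifying that this obstacle defect together with the PDE-induced defect $r^{\alpha_0} \|f\|$ can be absorbed into a single geometric recursion that still iterates cleanly under only the continuity (respectively, H\"older continuity) of the data.
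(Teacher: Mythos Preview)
Your interior argument is essentially the paper's: both truncate $u$ above and below by constants lying between the obstacles and then feed the resulting extremal sub/supersolutions into the weak Harnack inequality (Proposition~\ref{paraWeak}). The paper chooses the truncation levels $\varphi(x_0,t_0)+\sigma_0(2\sqrt2\,r)$ and $\psi(x_0,t_0)-\sigma_0(2\sqrt2\,r)$ and, crucially, defines $M_s:=\sup_{Q_s}\ol u$ and $m_s:=\inf_{Q_s}\ul u$ using the \emph{truncated} functions, so that $\inf_{Q_r}V=M_{2r}-M_r$ and $\inf_{Q_r}U=m_r-m_{2r}$ hold automatically and no case analysis is needed; your choice $\Phi^*=\sup_{Q_{2r}}\varphi$, $\Psi_*=\inf_{Q_{2r}}\psi$ with $M_s,m_s$ defined from $u$ itself is equivalent up to the three-case split you introduce. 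Either route yields the recursion $M_r-m_r\le\theta_0(M_{2r}-m_{2r})+Cr^{\alpha_0}\|f\|_{L^{p\wedge(n+2)}}+C\sigma_0(Cr)$ and concludes by Lemma~8.23 of \cite{GilTru83}.

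The boundary step, however, diverges from the paper and is where your proposal is thin. The paper (Lemma~\ref{paralemboundary}) does \emph{not} build barriers: under only the measure-density hypothesis (\ref{paraAboundary}) explicit barrier constructions are not available in general. Instead it repeats the interior truncation, sets
\[
U:=(\ul u-m_{2r})\wedge\ul c\ \mbox{ in }Q_{2r}(x_0,t_0)\cap\O_T,\qquad U:=\ul c\ \mbox{ in }Q_{2r}(x_0,t_0)\setminus\O_T,
\]
and similarly for $V$, with $\ul c,\ol c$ determined by the boundary datum $g$, and then applies the weak Harnack inequality on the \emph{full} cylinder $Q_{2r}(x_0,t_0)$ (with $\mu,f$ extended by zero). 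The measure-density condition (\ref{paraAboundary}) enters only to guarantee that the set where $U\equiv\ul c$, $V\equiv\ol c$ has volume $\gtrsim\Theta_0 r^{n+2}$, which converts the $L^{\e_0}$ lower bound from Proposition~\ref{paraWeak} into a pointwise inequality involving $\o_g$. The initial-time case $t_0=0$ is handled the same way by centering the cylinder at $(x_0,r^2)$. Your appeal to ``ABP-type arguments provide barriers'' is not how (\ref{paraAboundary}) is used here, and absent an exterior cone condition a genuine comparison-with-barriers argument would require substantial additional work; you should replace that sketch by the weak-Harnack-plus-constant-extension scheme above.
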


 Thanks to Theorem \ref{cor:Global}, we establish the following existence result whose proof is presented in Section 4.  

\begin{thm}\label{thm:paraEx}
Under $(\ref{paraApq})$, $(\ref{paraAf})$, $(\ref{paraAmu})$, $(\ref{paraAF})$, $(\ref{paraAob})$, $(\ref{paraAconti})$ and $(\ref{paraCDdata})$, we assume the uniform exterior cone condition on $\O$. Then, there exists an $L^p$-viscosity solution $u\in C(\ol \O_T)$ of $(\ref{E0})$ satisfying $(\ref{paraBC})$.
\end{thm}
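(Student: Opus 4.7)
The plan is to follow the elliptic argument of \cite{KT}: approximate all ingredients by smoother ones for which existence is classical, then pass to the limit using the global equi-continuity of Theorem \ref{cor:Global} together with the $L^p$-viscosity stability theory of \cite{CKS}. First I would mollify $f$ and $\mu$ to bounded $f^\e\to f$ in $L^p(\O_T)$ and $\mu^\e\to\mu$ in $L^q(\O_T)$ (keeping $\mu^\e\geq 0$), mollify $\varphi,\psi,g$ to continuous $\varphi^\e,\psi^\e,g^\e$, and adjust by constants of order $o(1)$ so as to preserve $\varphi^\e\leq g^\e\leq\psi^\e$ on $\p_p\O_T$ and $\varphi^\e\leq\psi^\e$ on $\ol\O_T$.

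Second, for each regularized problem with bounded ingredients and continuous obstacles I would obtain a continuous $L^p$-viscosity solution $u^\e\in C(\ol\O_T)$ of
\[
\min\le\{\max\le\{u^\e_t+F(x,t,Du^\e,D^2u^\e)-f^\e,\, u^\e-\psi^\e\ri\},\, u^\e-\varphi^\e\ri\}=0
\]
with $u^\e=g^\e$ on $\p_p\O_T$, by Perron's method adapted to the bilateral structure: the obstacles themselves serve as natural admissible sub/supersolution candidates in the min-max formulation, and the uniform exterior cone condition on $\O$ provides continuous barriers at the parabolic boundary which let Perron's envelope attain $g^\e$ continuously. This step is essentially the parabolic analog of the corresponding construction in \cite{KT} and uses no ingredient beyond the standard comparison principle for $L^p$-viscosity solutions.

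Third, Theorem \ref{cor:Global} applied to $u^\e$ produces a modulus of continuity $\omega_0^\e$ that depends on the data only through uniform bounds on $\|f^\e\|_{L^p(\O_T)}$, $\|\mu^\e\|_{L^q(\O_T)}$ and on the moduli of continuity of $\varphi^\e,\psi^\e,g^\e$, all of which are uniform in $\e$ by construction. Combined with the two-sided bound $\|\varphi\|_\infty\leq u^\e\leq\|\psi\|_\infty$ from Proposition \ref{prop:bounded}, this yields equi-continuity and uniform boundedness of $\{u^\e\}$ on $\ol\O_T$, so Arzel\`a-Ascoli provides a subsequence converging uniformly to some $u\in C(\ol\O_T)$, and $u=g$ on $\p_p\O_T$ by uniform convergence of $g^\e$.

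Finally, to identify $u$ as an $L^p$-viscosity solution of (\ref{E0}), I would invoke the $L^p$-viscosity stability theorem of \cite{CKS} applied to the two auxiliary PDE built from Proposition \ref{prop:def}. The main obstacle is ensuring that the min-max algebraic structure is preserved in the limit: given $\eta\in W^{2,1}_p$ for which $u-\eta$ attains a local extremum at $(x_0,t_0)$, one has to distinguish the three cases $u(x_0,t_0)=\varphi(x_0,t_0)$, $u(x_0,t_0)=\psi(x_0,t_0)$, and $\varphi(x_0,t_0)<u(x_0,t_0)<\psi(x_0,t_0)$, and in each case transfer a local extremum of $u^\e-\eta$ at some $(x_\e,t_\e)\to(x_0,t_0)$ into the appropriate one-sided inequality for the approximate problem before passing to the limit; the uniform convergence of $\varphi^\e,\psi^\e$ guarantees that the case distinction is stable along the subsequence. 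This case analysis mirrors the one carried out for the elliptic case in \cite{KT}, which the author indicates can be reused essentially verbatim.
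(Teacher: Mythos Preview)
Your overall strategy---approximate, apply Theorem \ref{cor:Global} to get a uniform modulus, extract a limit by Arzel\`a--Ascoli, and identify the limit via $L^p$-stability---matches the paper. But your Step 2 diverges substantially from what the paper actually does, and this is where the gap lies. The paper does \emph{not} use Perron's method for the approximated bilateral problem. Instead it mollifies $F$ itself to a continuous $F_\e$, and then introduces a second parameter $\delta>0$ through the \emph{penalized} equation
\[
u_t+F_\e(x,t,Du,D^2u)+\tfrac1\delta(u-\psi_\e)^+-\tfrac1\delta(\varphi_\e-u)^+=f_\e,
\]
which is an unconstrained fully nonlinear parabolic PDE with continuous ingredients. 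Existence of a $C$-viscosity solution $u_\e^\delta$ then comes directly from \cite{CKLS}. A separate step (Theorem \ref{thm:approx}) shows that the penalty term $\tfrac1\delta(u_\e^\delta-\psi_\e)^++\tfrac1\delta(\varphi_\e-u_\e^\delta)^+$ is bounded independently of $\delta$ and sends $\delta\to0$ to obtain the $C$-viscosity solution $u_\e$ of the bilateral problem with smoothed data. Only then does the paper let $\e\to0$ along the lines you describe.

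Your shortcut through Perron's method is not justified by the reference you give: you appeal to \cite{KT} for ``the corresponding construction,'' but the present paper explicitly says it follows the same line as \cite{KT}, and that line is penalization, not Perron. A direct Perron argument for the bilateral min--max problem in the $L^p$-viscosity framework would require you to supply a comparison principle for $L^p$-viscosity sub- and supersolutions of (\ref{E0}) (not stated anywhere in the paper) and to carry out the bump construction while respecting both obstacle constraints; you provide neither. Note also that mollifying $\mu$ while leaving $F$ untouched does not give you ``bounded ingredients'': the structure condition (\ref{paraAF}) for the original $F$ still carries the unbounded $\mu$, so nothing is gained. The paper's two-parameter penalization route sidesteps all of this by reducing to an unconstrained equation with continuous data before any existence result is invoked.
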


In Section 5, assuming 
\begin{equation}\label{paraApq2}
q\geq p>n+2,
\end{equation}
we define 
\[
\beta_0:=1-\fr{n+2}{p}\in(0, 1). 
\]
To obtain $C^{1, \beta}$ estimates on $L^p$-viscosity solutions of (\ref{E0}), we suppose that 
\begin{equation}\label{paraAregob}
\varphi, \psi\in C^{1, \beta_1}(\O_T)\quad\mbox{for }\beta_1\in (0, 1).
\end{equation}
In Section 5.2, we will use the constant $\beta_2$ defined by 
\[
\beta_2:=\beta_0\wedge\beta_1\in(0, 1).
\]

In order to state the next theorem, we prepare some notations. 
For small $r>0$, we introduce subdomains of $\O$ and $\O_T$, respectively, 
\begin{equation*}
\O^r:=\{x\in \O : \dist(x, \p\O)>r\},\quad\mbox{and}\quad
\O_T^r:=\O^r\ti(r^2, T].
\end{equation*}

For $u\in C(\O_T)$ such that $\varphi\leq u \leq \psi$ in $\O_T$, we set coincidence sets
\begin{align*}
C^-[u]&:=\{(x, t)\in \O_T : u(x, t)=\varphi(x, t)\},\\
 C^+[u]&:=\{(x, t)\in \O_T : u(x, t)=\psi(x, t)\},\\
C^\pm[u]&:=C^-[u]\cup C^+[u]\subset \O_T, 
 \end{align*}
and the non-coincidence set
\[
N[u]:=\O_T\setminus C^\pm[u]=\{(x, t)\in \O_T : \varphi(x, t)<u(x, t)<\psi(x, t)\}.
\]
For small $r>0$, we define subdomains of $N[u]$ 
\[ N_r[u]:=\{(x, t)\in \O^r_T : \dist((x, t), C^\pm[u])>r\}.\] 
 
For $F$ in (\ref{E0}), we use the following notation: 
\[
\theta((x, t), (y, s)):=\sup_{X\in S^n}\fr{|F(x, t, 0, X)-F(y, s, 0, X)|}{1+\|X\|}\quad\mbox{for }(x, t), (y, s)\in \O_T.
\]

\begin{thm}\label{Nearcoin}
Assume $(\ref{paraApq2})$, $(\ref{paraAf})$, $(\ref{paraAmu})$, $(\ref{paraAF})$, $(\ref{paraAob})$, and $(\ref{paraAregob})$. 
Let $\beta_3\in (0, \hat{\beta}\wedge\beta_0)\cap(0, \beta_1]$,
where $\hat\beta\in(0, 1)$ is from Proposition $\ref{prop:C1}$.
For each small $\e>0$, there exist $C>0$ and $\delta_0>0$ such that if $u\in C(\O_T)$ is an $L^p$-viscosity solution of $(\ref{E0})$, and if 
\begin{equation}\label{paraAF2}
\fr{1}{r}\|\theta((y, s), \cdot)\|_{L^{n+2}(Q_r(y, s))}\leq \delta_0 \quad\mbox{for } r\in (0, \e]\mbox{ and }(y, s)\in N_{\e}[u],
\end{equation} 
then it follows that 
\[
|Du(x, t)-Du(y, s)|\leq Cd((x, t), (y, s))^{\beta_3}\quad\mbox{for }(x, t), (y, s)\in \O_T^\e. 
\]
\end{thm}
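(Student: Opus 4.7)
The strategy follows the standard dichotomy for obstacle problems: on the non-coincidence set $N[u]$ the solution satisfies the unobstructed fully nonlinear equation, while on the coincidence set $C^\pm[u]$ it coincides with one of the $C^{1,\beta_1}$ obstacles. The plan is to construct, at every base point $(y_0,s_0)\in\O_T^\e$, an affine function $L_{(y_0,s_0)}(x):=a(y_0,s_0)+\langle b(y_0,s_0),x-y_0\rangle$ such that
\[
\sup_{(x,t)\in Q_r(y_0,s_0)\cap\O_T}|u(x,t)-L_{(y_0,s_0)}(x)|\leq Cr^{1+\beta_3}\quad\mbox{for all }r\in(0,\e/2],
\]
with $C$ uniform in $(y_0,s_0)$. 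A standard comparison of these affine approximations at nearby base points then yields $b(\cdot)=Du(\cdot)$ and the required $\beta_3$-H\"older estimate on $Du$ in $\O_T^\e$.

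For a base point $(y_0,s_0)\in C^-[u]$ (the case of $C^+[u]$ being symmetric), I would take $L_{(y_0,s_0)}(x):=\varphi(y_0,s_0)+\langle D\varphi(y_0,s_0),x-y_0\rangle$. The lower bound $u-L_{(y_0,s_0)}\geq\varphi-L_{(y_0,s_0)}\geq -Cr^{1+\beta_1}$ on $Q_r(y_0,s_0)$ is immediate from Proposition \ref{prop:bounded} and (\ref{paraAregob}). For the matching upper bound, I would apply Proposition \ref{prop:def} to $\ell:=L_{(y_0,s_0)}+Cr^{1+\beta_1}$, which satisfies $\ell\geq\varphi$ in $Q_r(y_0,s_0)$: the function $u\vee\ell$ is then an $L^p$-viscosity subsolution of the extremal equation $w_t+\P^-(D^2w)-\mu|Dw|-f^+=0$, and the usual ABP / weak-Harnack argument combined with the upper obstacle $u\leq\psi$ gives $u-L_{(y_0,s_0)}\leq Cr^{1+\beta_3}$.

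For base points $(y_0,s_0)\in N_\e[u]$, $u$ satisfies the unobstructed PDE in $Q_\e(y_0,s_0)$, and I would follow the compactness argument of \cite{Sh}. The smallness hypothesis (\ref{paraAF2}), together with the smallness of $\|\mu\|_{L^q(Q_r)}$ and $\|f\|_{L^p(Q_r)}$ on small balls (via $q\geq p>n+2$ and the scaling factor $r^{1-(n+2)/q}$), lets one approximate $u$ on $Q_r(y_0,s_0)$ by a $C$-viscosity solution $v$ of a frozen constant-coefficient equation $v_t+F_0(D^2v)=0$ with quantitative error $o(r^{1+\hat\beta})$. Proposition \ref{prop:C1} gives $v\in C^{1,\hat\beta}$, and transferring the decay back to $u$ produces the one-step oscillation decay of $u-L$ at scale $\rho r$; an induction across scales $\rho^k r$ yields the affine approximation. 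For points in $N[u]\setminus N_\e[u]$, the iteration is run only down to $d_0:=\dist((y_0,s_0),C^\pm[u])$, and the resulting affine function is then glued to the one produced in the previous paragraph at the nearest contact point; the restriction $\beta_3\leq\beta_1$ ensures the gluing preserves the decay rate.

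The main obstacle, flagged in Remark 2.4, is that the oscillation-decay argument of Proposition \ref{prop:Holder} cannot be applied verbatim at a free point whose neighbourhood meets the coincidence set. If $Q_{2r}(y_0,s_0)\cap C^-[u]\neq\emptyset$, then $U:=u-m_{2r}$ is not in general a supersolution of the extremal equation on all of $Q_{2r}$, since inside $C^-[u]$ the supersolution property of $u$ is inherited from $\varphi$ rather than from the PDE; the same failure arises for $V:=M_{2r}-u$ near $C^+[u]$. The resolution is to replace the literal $\sup$ and $\inf$ over $Q_s$ by quantities taken on modified sets (for instance, over $Q_s\setminus C^\pm[u]$, or over $Q_s$ after subtracting a tangent plane of the touching obstacle) so that the weak Harnack inequality of Proposition \ref{paraWeak} can still be invoked. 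Reconciling this modified oscillation decay with the iteration across scales and with the gluing at the free boundary, uniformly in $(y_0,s_0)\in\O_T^\e$, is the delicate part of the argument.
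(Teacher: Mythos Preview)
Your overall architecture---build a uniform $(1+\beta_3)$-decay of $u$ minus an affine function at every base point, treat the pure non-coincidence region by a compactness/freezing argument, and glue near the free boundary via the nearest contact point---is exactly the one the paper uses (Proposition~\ref{CKS} for $N_{r_1}[u]$, Lemma~\ref{leminductive} for contact points, and the Case I/II dichotomy in the proof of Theorem~\ref{Nearcoin}).

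The genuine gap is in your second paragraph, the upper bound at a contact point $(y_0,s_0)\in C^-[u]$.  You propose to obtain $u-L_{(y_0,s_0)}\leq Cr^{1+\beta_3}$ from ``the usual ABP/weak-Harnack argument combined with $u\leq\psi$''.  This is precisely the elliptic argument of the Appendix, and the paper explicitly explains why it does \emph{not} transfer to the parabolic setting (see the remark opening Section~5.2): in Proposition~\ref{paraWeak} the $L^{\e_0}$-norm is taken over $Q_r(0,-3r^2)$ while the infimum is over $Q_r$, and these cylinders are disjoint.  Hence the step ``weak Harnack on the supersolution $\Rightarrow$ small $L^{\e_0}$-mass $\Rightarrow$ local maximum principle on the subsolution'' cannot be closed, because the $L^{\e_0}$-control lives at strictly earlier times than the cylinder on which you want $\sup(u-L)$.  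The bound $u\leq\psi$ does not help either: $\psi-L_{(y_0,s_0)}$ involves $\psi-\varphi(y_0,s_0)-\langle D\varphi(y_0,s_0),x-y_0\rangle$, which is $O(1)$ in general, not $O(r^{1+\beta})$.

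What the paper does instead for the contact-point estimate (Lemma~\ref{leminductive}) is a second compactness argument, Lemma~\ref{lemflat}: arguing by contradiction, one extracts a limit $u_\infty\leq 0$ with $u_\infty(0,0)=0$ that is simultaneously a sub- and supersolution of the extremal Pucci equations; the strong maximum principle forces $u_\infty\equiv 0$ on the backward half of the cylinder, and an explicit barrier $\eta(x,t)=-4|x|^2-8n\Lambda t$ is then used to propagate this forward in time and reach the contradiction.  Your proposal should replace the weak-Harnack step at contact points by this compactness-plus-barrier mechanism; the remark in your final paragraph about modifying $M_s$, $m_s$ is the idea behind the $C^\alpha$ estimate of $u$ (Lemma~\ref{paralemlocal}), not the $C^{1,\beta}$ estimate at contact points.
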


\section{Global equi-continuity estimates}

In what follows, under (\ref{paraAconti}), we denote by $\sigma_0$ the modulus of continuity of $\varphi$ and $\psi$ in $\ol \O_T$:
\[
\sigma_0(r):=\sup\biggl\{|\varphi(x, t)-\varphi(y, s)|\vee|\psi(x, t)-\psi(y, s)|\le.:\begin{array}{l} (x, t), (y, s)\in \O_T\\
d((x, t), (y, s))<r	
\end{array}\ri.
\biggr\}. 
\]

\subsection{Local estimates}
We first show the local equi-continuity estimate on $L^p$-viscosity solutions of (\ref{E0}). 

\begin{lem}\label{paralemlocal}
Assume $(\ref{paraApq})$, $(\ref{paraAf})$, $(\ref{paraAmu})$, $(\ref{paraAF})$, $(\ref{paraAob})$ and $(\ref{paraAconti})$. For any compact set $K\Subset \O_T$, there exists a modulus of continuity $\omega_0$ such that if $u\in C(\O_T)$ is an $L^p$-viscosity solution of $(\ref{E0})$, then it follows that 
\[
|u(x, t)-u(y, s)|\leq \omega_0(d((x, t), (y, s)))\quad\mbox{for }(x, t), (y, s)\in K. 
\] 
\end{lem}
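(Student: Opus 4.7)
My plan is to mimic the oscillation-decay proof of Proposition \ref{prop:Holder}, but to replace the canonical sup and inf of $u$ by obstacle-adapted cut-offs to which Proposition \ref{prop:def} applies; this is the ``different choice of $M_s$ and $m_s$'' announced in the remark above. Fix $(x_0,t_0)\in K$ and $r>0$ so small that $Q^\sharp:=Q_{2r}(x_0,t_0)\subset \O_T$. Set
\[
\tilde\varphi:=\sup_{Q^\sharp}\varphi,\quad \tilde\psi:=\inf_{Q^\sharp}\psi,\quad m:=\inf_{Q^\sharp}u,\quad M:=\sup_{Q^\sharp}u,
\]
and introduce the constants $\ell_+:=m\vee\tilde\varphi$ and $\ell_-:=M\wedge\tilde\psi$. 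Using $\varphi\leq u\leq\psi$ and the common modulus $\s_0$ of the two obstacles, one checks $\tilde\varphi-m,\ M-\tilde\psi\leq \s_0(Cr)$ for a universal geometric constant $C$. Hence the cut-offs
\[
\hat u_+:=u\vee\ell_+,\qquad \hat u_-:=u\wedge\ell_-
\]
satisfy $0\leq \hat u_+-u\leq \s_0(Cr)$ and $0\leq u-\hat u_-\leq \s_0(Cr)$ in $Q^\sharp$. Since in addition $\ell_+\geq \tilde\varphi\geq\varphi$ and $\ell_-\leq \tilde\psi\leq\psi$ there, Proposition \ref{prop:def} (with the affine function taken as the constant $\ell_\pm$) gives that $\hat u_+$ is an $L^p$-viscosity subsolution of $w_t+\P^-(D^2 w)-\mu|Dw|-f^+=0$ and $\hat u_-$ is an $L^p$-viscosity supersolution of $w_t+\P^+(D^2 w)+\mu|Dw|+f^-=0$ in $Q^\sharp$.

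Next define $U:=\hat u_--m\geq 0$ and $V:=M-\hat u_+\geq 0$. The function $U$ is already a nonnegative $L^p$-viscosity supersolution of an extremal $\P^+$-equation, and the sign-flipped $V$ becomes one via the identity $\P^-(-X)=-\P^+(X)$. Applying the parabolic weak Harnack inequality (Proposition \ref{paraWeak}) separately to $U$ and $V$ on $Q^\sharp$, combining the two estimates through (\ref{eq:tri}), and using the pointwise inequality
\[
U+V=(M-m)-(\hat u_+-\hat u_-)\geq (M-m)-2\s_0(Cr)
\]
on $Q^\sharp$ together with the easy conversion between oscillations of $\hat u_\pm$ and oscillations of $u$ supplied by $|\hat u_\pm-u|\leq \s_0(Cr)$, I arrive at an oscillation-decay inequality
\[
\o(r)\leq \theta_0\,\o(2r)+\t(r),
\]
where $\o(s):=\osc_{Q_s(x_0,t_0)}u$, $\theta_0\in(0,1)$ depends only on the constants of Proposition \ref{paraWeak}, and $\t(r):=C\bigl(\s_0(Cr)+r^{\alpha_0}\|f\|_{L^{p\wedge(n+2)}(Q^\sharp)}\bigr)$ tends to $0$ as $r\to 0^+$, thanks to the continuity of $\varphi,\psi$ and the absolute continuity of the $L^p$-integral of $f$ over shrinking cylinders. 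Iterating this relation exactly as in the proof of Proposition \ref{prop:Holder} produces the sought modulus of continuity $\o_0$, uniform over $(x_0,t_0)\in K$.

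The principal obstacle is that neither $\hat u_+$ nor $\hat u_-$ is simultaneously both a sub- and a supersolution of one and the same extremal equation, so the single-function H\"older argument of Proposition \ref{prop:Holder} cannot be applied verbatim. The remedy is to work with the pair $(U,V)$: the two independent weak Harnack bounds are glued together through the uniform control $\hat u_+-\hat u_-\leq 2\s_0(Cr)$, which is exactly what allows the combined $L^{\e_0}$-estimate to encode honest oscillation decay for $u$, up to an infinitesimal error dictated by the obstacle modulus $\s_0$ and the shrinking $L^p$-norm of $f$.
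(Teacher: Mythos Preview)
Your proof is correct and follows essentially the same approach as the paper's: both replace $u$ by obstacle-adapted constant cut-offs so that Proposition~\ref{prop:def} applies, run the weak Harnack inequality on the resulting nonnegative super-solutions $U$ and $V$, and absorb the discrepancy $\hat u_+-\hat u_-$ (respectively $\ol u-\ul u$) as a $\sigma_0$-error in the oscillation recursion. The only cosmetic differences are that the paper takes the cut-off levels $\varphi(x_0,t_0)+\sigma_0$ and $\psi(x_0,t_0)-\sigma_0$ and tracks $M_s:=\sup_{Q_s}\ol u$, $m_s:=\inf_{Q_s}\ul u$, whereas you use $\ell_+=m\vee\tilde\varphi$, $\ell_-=M\wedge\tilde\psi$ and track $\osc_{Q_s}u$ directly; both choices lead to the same recursion $\o(r)\le\theta_0\,\o(2r)+\t(r)$ and the same appeal to Lemma~8.23 of \cite{GilTru83}.
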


\begin{proof}
Let $r\in(0, \fr{d_0}{2}]$, where $d_0:=\dist(K, \p_p \O_T)$, and $(x_0, t_0)\in K$. Considering $u(x+x_0, t+t_0)$, we may suppose that $(x_0, t_0)=(0, 0)$. 
Setting $\sigma_0:=\sigma_0(2\sqrt{2}r)$, we define 
\[
\ol{u}:=u\vee(\varphi(0, 0)+\sigma_0)\quad\mbox{and}\quad \ul{u}:=u\wedge(\psi(0, 0)-\sigma_0). 
\]
By noting that $\varphi(0, 0)+\sigma_0\geq\varphi$ and $\psi\geq \psi(0, 0)-\sigma_0$ in $Q_{2r}$, it follows from Proposition \ref{prop:def} that $\ol{u}$ and $\ul{u}$ are, respectively, an $L^p$-viscosity subsolution and an $L^p$-viscosity supersolution of 
\[
u_t+\P^-(D^2u)-\mu|Du|-f^+=0\quad\mbox{and}\quad u_t+\P^+(D^2u)+\mu|Du|+f^-=0 
\]
in $Q_{2r}$. 

Now, for $s\in(0, d_0]$, setting 
\[
M_s:=\sup_{Q_s} \,\ol{u},\quad\mbox{and}\quad m_s:=\inf_{Q_s} \,\ul{u},
\]
we define
\[
U:=\ul{u}-m_{2r}, \quad\mbox{and}\quad V:=M_{2r}-\ol{u}
\] for $r\in(0, \fr{d_0}{2}]$. 

It is easy to see that $U$ and $V$ are, respectively, nonnegative $L^p$-viscosity supersolutions of 
\[
u_t+\P^+(D^2u)+\mu|Du|+f^\mp=0\quad\mbox{in }Q_{2r}. 
\]
Hence, by Proposition \ref{paraWeak}, we have
\begin{equation}\label{parawH1}
\|U\|_{L^{\e_0}(Q_r(0, -3r^2))}\leq Cr^{\fr{n+2}{\e_0}}\le(\inf_{Q_r}\, U+r^{\alpha_0}\|f\|_{L^{p\wedge(n+2)}(Q_{2r})}\ri)
\end{equation}
and 
\begin{equation}\label{parawH2}
\|V\|_{L^{\e_0}(Q_r(0, -3r^2))}\leq Cr^{\fr{n+2}{\e_0}}\le(\inf_{Q_r}\, V+r^{\alpha_0}\|f\|_{L^{p\wedge(n+2)}(Q_{2r})}\ri).
\end{equation}
Hereafter, $C>0$ denotes the various constant depending only on known quantities. 
Since $M_{2r}-m_{2r}=V+(\ol{u}-u)+(u-\ul{u})+U\leq V+4\sigma_0+U$ in $Q_{2r}$ by Proposition \ref{prop:bounded}, we have 
\[
M_{2r}-m_{2r}\leq Cr^{-\fr{n+2}{\e_0}}\le(\|V\|_{L^{\e_0}(Q_r(0, -3r^2))}+\sigma_0 r^{\fr{n+2}{\e_0}}+\|U\|_{L^{\e_0}(Q_r(0, -3r^2))}\ri). 
\]
Combining this with (\ref{parawH1}) and (\ref{parawH2}), we can find $\theta_0\in(0, 1)$ such that 
\[
M_r-m_r\leq \theta_0(M_{2r}-m_{2r})+r^{\alpha_0}\|f\|_{L^{p\wedge(n+2)}(\O_T)}+\sigma_0(2\sqrt{2}r). 
\]
We note here that 
\[u(x, t)-u(y, s)\leq \ol{u}(x, t)-\ul{u}(y, s)\quad \mbox{for } (x, t), (y, s)\in Q_{2r}.
\]
Thus, as in the proof of Proposition \ref{prop:Holder}, in view of Lemma 8.23 in \cite{GilTru83}, it is standard to find a modulus of continuity $\o_0$ in the assertion. 
\end{proof}

\begin{rem}\label{rem:holder}
As noted in Section 2.2, if we suppose $\varphi, \psi\in C^{\alpha_1}(\O_T)$ for $\alpha_1\in(0, 1)$, then we can show $u\in C^{\alpha_2}(\O_T)$ for some $\alpha_2\in\le(0, \alpha_0\wedge\alpha_1\ri]$ because we can choose $\sigma_0(r)=C r^{\alpha_1}$ for some $C>0$ in the above. 
\end{rem}

\subsection{Equi-continuity near $\p_p \O_T$}
We next prove that $u$ is equi-continuous near $\p_p \O_T$.

\begin{lem}\label{paralemboundary}
Assume $(\ref{paraApq})$, $(\ref{paraAf})$, $(\ref{paraAmu})$, $(\ref{paraAF})$, $(\ref{paraAob})$, $(\ref{paraAconti})$, $(\ref{paraAboundary})$ and $(\ref{paraCDdata})$. For small $\e>0$, there exists a modulus of continuity $\omega_0$ such that if $u\in C(\ol \O_T)$ is an $L^p$-viscosity solution of $(\ref{E0})$ satisfying $(\ref{paraBC})$, then it follows that 
\[
|u(x, t)-u(y, s)|\leq \omega_0(d((x, t), (y, s)))\quad\mbox{for }(x, t), (y, s)\in \ol \O_T\setminus\O_T^\e. 
\] 
\end{lem}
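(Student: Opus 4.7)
The plan is to adapt the truncation-plus-weak-Harnack scheme of Lemma \ref{paralemlocal} to points of $\ol\O_T$ near $\p_p \O_T$, using the exterior measure density (\ref{paraAboundary}) (for lateral-boundary points) and the automatic measure gain from $\{t<0\}$ (for initial-boundary points) to furnish the oscillation decay that in the interior came for free from the choice of sampling cylinder. First I would reduce to a single-point boundary oscillation estimate: any pair $(x,t), (y,s)\in \ol\O_T\setminus\O_T^\e$ can be compared either directly via Lemma \ref{paralemlocal}, when their parabolic distance is small relative to a fixed interior buffer, or via chaining through nearest points of $\p_p\O_T$ using the continuity of $g$ and the one-point boundary modulus.

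Fix $(z_0,s_0)\in\p_p\O_T$ and $r\in(0,\e]$, set $\bar g:=g(z_0,s_0)$ and
\[
\tau(r):=\sigma_0(2\sqrt 2\, r)+\sigma_g(2\sqrt 2\, r),
\]
where $\sigma_g$ denotes the modulus of $g$ on $\p_p\O_T$. In $Q_{2r}(z_0,s_0)\cap\O_T$, define
\[
\ol u:=u\vee(\bar g+\tau(r)),\qquad \ul u:=u\wedge(\bar g-\tau(r)).
\]
Since $\varphi(z_0,s_0)\leq \bar g\leq\psi(z_0,s_0)$ and $\varphi,\psi$ have modulus $\sigma_0$ on $\ol\O_T$, we have $\bar g+\tau(r)\geq\varphi$ and $\bar g-\tau(r)\leq\psi$ throughout $Q_{2r}(z_0,s_0)\cap\O_T$. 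Proposition \ref{prop:def} applied to the affine (constant) functions $\bar g\pm\tau(r)$ then gives that $\ol u, \ul u$ are $L^p$-viscosity sub/supersolutions of the extremal equations appearing in the proof of Lemma \ref{paralemlocal}.

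Next I would extend $\ol u$ and $\ul u$ to $Q_{2r}(z_0,s_0)$ by the constants $\bar g+\tau(r)$ and $\bar g-\tau(r)$ in the exterior of $\O_T$. Since $u=g$ on $\p_p\O_T$ satisfies $|g-\bar g|\leq\sigma_g(2\sqrt 2\, r)\leq\tau(r)$, the truncations match the exterior constants on $\p_p\O_T\cap Q_{2r}(z_0,s_0)$, so the extensions $\tilde{\ol u},\tilde{\ul u}$ are continuous; gluing the interior extremal inequality with the trivial one for constants, they remain $L^p$-viscosity sub/supersolutions on $Q_{2r}(z_0,s_0)$. Applying Proposition \ref{paraWeak} to $U:=\tilde{\ul u}-m_{2r}$ and $V:=M_{2r}-\tilde{\ol u}$ (with $M_{2r},m_{2r}$ the sup and inf over $Q_{2r}(z_0,s_0)$), and lower-bounding the $L^{\e_0}$-norms using that $\tilde{\ul u}\equiv\bar g-\tau(r)$ and $\tilde{\ol u}\equiv\bar g+\tau(r)$ on $Q_r(z_0,s_0-3r^2)\setminus\O_T$ --- whose measure is at least $c\, r^{n+2}$ by (\ref{paraAboundary}) in the lateral case and equals $|Q_r|$ in the initial case, since $Q_r(z_0,-3r^2)\subset\{t<0\}$ when $s_0=0$ --- I obtain
\[
\osc_{Q_r(z_0,s_0)\cap\ol\O_T} u\leq \theta_0\,\osc_{Q_{2r}(z_0,s_0)\cap\ol\O_T} u+C\bigl(r^{\alpha_0}\|f\|_{L^{p\wedge(n+2)}(\O_T)}+\tau(r)\bigr)
\]
for some $\theta_0\in(0,1)$. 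A dyadic iteration via Lemma 8.23 of \cite{GilTru83} then produces a modulus $\omega_0$ at $(z_0,s_0)$, uniform in the boundary point, depending only on $n,\l,\L,p,q$, $\|\mu\|_{L^q(\O_T)}$, $R_0,\Theta_0$, the moduli $\sigma_0,\sigma_g$ and an $L^\infty$-bound on $u$.

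The main obstacle I foresee is the rigorous verification that the constant extension of the truncated functions across $\p_p\O_T$ still satisfies the relevant $L^p$-viscosity inequality \emph{at boundary points themselves}: one has to analyse $W^{2,1}_p$ test functions that touch on $\p_p\O_T$ and check that the one-sided extremal inequality (with $f^+\geq 0$ or $f^-\geq 0$ on the right) survives gluing an interior-defined viscosity (super/sub)solution with a constant. A secondary technicality is uniformizing the exterior-measure lower bound across lateral-, initial-, and corner-boundary points, and arranging the threshold $r\leq\e\wedge R_0$ so as to obtain a universal decay factor $\theta_0$ and hence a single modulus $\omega_0$ valid on all of $\ol\O_T\setminus\O_T^\e$.
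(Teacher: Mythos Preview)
Your overall strategy matches the paper's: truncate to neutralise the obstacles, extend by constants into the complement of $\O_T$, and feed the result into the weak Harnack inequality, using exterior measure density to drive oscillation decay. The paper truncates at $\varphi(x_0,t_0)+\sigma_0$ and $\psi(x_0,t_0)-\sigma_0$ (obstacle-based) and then performs a Berestycki--Nirenberg--Varadhan style cap $(\ul u-m_{2r})\wedge\ul c$, with $\ul c$ the boundary infimum, to make the extension continuous; your choice of truncating at $\bar g\pm\tau(r)$ with the combined modulus $\tau=\sigma_0+\sigma_g$ is a legitimate variant that makes the extension step automatic, since your $\tilde{\ul u}$ (resp.\ $\tilde{\ol u}$) attains its global maximum (resp.\ minimum) $\bar g-\tau(r)$ (resp.\ $\bar g+\tau(r)$) throughout the exterior and on the parabolic boundary. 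In particular, your self-identified ``main obstacle'' is not one: any $W^{2,1}_p$ test function touching $\tilde{\ul u}$ from below at a point of $\p_p\O_T$ also touches the constant $\bar g-\tau(r)$ from below there, and the constant is trivially a supersolution of the extremal equation (with $\hat f,\hat\mu\equiv 0$ outside $\O_T$).

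There is, however, a genuine gap in your treatment of the initial case $s_0=0$. With your centering, $Q_r(z_0,0)=B_r(z_0)\times(-r^2,0]$ does not meet $\O_T=\O\times(0,T]$; the set $Q_r(z_0,0)\cap\ol\O_T$ reduces to the initial slice $B_r(z_0)\times\{0\}$, on which $u=g$. Both the $L^{\e_0}$-cylinder $Q_r(z_0,-3r^2)$ and the infimum cylinder $Q_r(z_0,0)$ lie entirely in the region where your extensions are constant, so the weak Harnack inequality is vacuous and the oscillation decay you claim gives no control of $|u(x,t)-u(z_0,0)|$ for $t>0$. The paper avoids this by shifting the center upward in time: in Case~II it works in $Q_{2r}(0,r^2)$ and takes $M_s,m_s$ over $Q_s(0,s^2/4)\cap\O_T$, so that the infimum cylinder $Q_r(0,r^2)=B_r\times(0,r^2]$ sits in $\O_T$ while the $L^{\e_0}$-cylinder $Q_r(0,-2r^2)$ remains in $\{t<0\}$. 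You need the analogous shift.
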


\begin{proof}
Let $(x_0, t_0)\in \p_p \O_T$. For simplicity, we may suppose $x_0=0\in\ol\O$ by translation. 

$\ul{\mbox{Case I}: 0\in \p\O \mbox{ and } 0<t_0 \leq T.}$ Let $0<r\leq \fr12$. As in the proof of Lemma \ref{paralemlocal}, we set 
\[
\ol{u}:=u\vee(\varphi(0, t_0)+\sigma_0)\quad\mbox{and}\quad \ul{u}:=u\wedge(\psi(0, t_0)-\sigma_0),
\]
where $\sigma_0:=\sigma\le(2\sqrt{2}r\ri)$. 
In view of Proposition \ref{prop:def} again, we see that $\ol{u}$ and $\ul{u}$ are, respectively, an $L^p$-viscosity subsolution and an $L^p$-viscosity supersolution of 
\begin{align*}
u_t+\P^-(D^2u)-\mu|Du|-f^+=0,\quad\mbox{and}\quad u_t+\P^+(D^2u)+\mu|Du|+f^-=0
\end{align*}
in $Q_{2r}(0, t_0)\cap \O_T$. 
Now, as in \cite{GilTru83, KS3} for instance, for $s\in(0, 1]$, setting 
\[
M_s:=\sup_{Q_{s}(0, t_0)\cap \O_T} \ol{u},\quad\mbox{and}\quad m_s:=\inf_{Q_{s}(0, t_0)\cap \O_T} \ul{u},
\]
we define 
\[
U:=
\le\{\begin{array}{ll}
(\ul{u}-m_{2r})\wedge \ul{c}&\mbox{in } Q_{2r}(0, t_0)\cap \O_T, \\
\ul{c}&\mbox{in } Q_{2r}(0, t_0)\setminus \O_T, 
\end{array}\ri.
\]
and
\[
V:=
\le\{\begin{array}{ll}
(M_{2r}-\ol{u})\wedge \ol{c}&\mbox{in } Q_{2r}(0, t_0)\cap \O_T, \\
\ol{c}&\mbox{in } Q_{2r}(0, t_0)\setminus \O_T, 
\end{array}\ri.
\]
where nonnegative constants $\ul{c}$ and $\ol{c}$ are given by 
\[
\ul{c}:=\inf_{Q_{2r}(0, t_0)\cap \p_p \O_T} \ul{u}-m_{2r},\quad\mbox{and}\quad \ol{c}:=M_{2r}-\sup_{Q_{2r}(0, t_0)\cap \p_p \O_T} \ol{u}. 
\]
Hence, it is easy to verify that $U$ and $V$ are nonnegative $L^p$-viscosity supersolutions of 
\[
u_t+\P^+(D^2u)+\hat{\mu}|Du|+|\hat{f}|=0\quad\mbox{in }Q_{2r}(0, t_0),
\]
where $\hat{f}$ and $\hat{\mu}$ are zero extensions of $f$ and $\mu$ outside of $\O_T$, respectively. In view of Proposition \ref{paraWeak},
 we have 
 \[
\Theta_0^{\fr{1}{\e_0}}\le(\inf_{Q_{2r}(0, t_0)\cap \p_p \O_T} \ul{u}-m_{2r}\ri)\leq C\le(\inf_{Q_{r}(0, t_0)\cap \O_T} \ul{u}-m_{2r} +r^{\alpha_0}\|f\|_{L^{p\wedge(n+2)}(\O_T)}\ri), 
\] 
 and
\[
\Theta_0^{\fr{1}{\e_0}}\le(M_{2r}-\sup_{Q_{2r}(0, t_0)\cap \p_p \O_T} \ol{u}\ri)\leq C\le(M_{2r}-\sup_{Q_{r}(0, t_0)\cap \O_T} \ol{u} +r^{\alpha_0}\|f\|_{L^{p\wedge(n+2)}(\O_T)}\ri).
\]
These inequalities imply that there exists $\theta_0\in(0, 1)$ such that 
\[
M_r-m_r\leq \theta_0(M_{2r}-m_{2r})+2r^{\alpha_0}\|f\|_{L^p(\O_T)}+\sigma_0(2\sqrt{2}r)+\omega_g(2\sqrt{2}r),
\] 
where 
\[\omega_g(r):=\sup\biggl\{|g(x, t)-g(y, s)|\le.:\begin{array}{l} (x, t), (y, s)\in \p_p \O_T\\
d((x, t), (y, s))<r	
\end{array}\ri.
\biggr\}. 
\] 
Therefore, we can find a modulus of continuity $\omega_0$ such that 
\[
|u(x, t)-u(y, s)|\leq \omega_0(d((x, t), (y, s)))\quad\mbox{for } (x, t), (y, s)\in (\ol{\O}\setminus \O^\e)\ti[0, T].
\]

$\ul{\mbox{Case II}: t_0=0.}$ Let $r\in (0, \e]$. As in Case I, setting 
\[
\ol{u}:=u\vee(\varphi(0, 0)+\sigma_0),\quad\mbox{and}\quad 
\ul{u}:=u\wedge(\psi(0, 0)-\sigma_0) \quad\mbox{in }Q_{2r}(0, r^2),
\]
where $\sigma_0:=\sigma_0\le(2\sqrt{2}r\ri)$, we have
\[
\inf_{Q_{2r}(0, r^2)\cap \p_p \O_T} \ul{u}-m_{2r}\leq C\le(\inf_{Q_r(0, r^2)\cap \O_T} \ul{u}-m_{2r} +r^{\alpha_0}\|f\|_{L^{p\wedge(n+2)}(\O_T)}\ri)
\]
and
\[
M_{2r}-\sup_{Q_{2r}(0, r^2)\cap \p_p \O_T} \ol{u}\leq C\le(M_{2r}-\sup_{Q_r(0, r^2)\cap \O_T} \ol{u} +r^{\alpha_0}\|f\|_{L^{p\wedge(n+2)}(\O_T)}\ri),
\]
where
\[
M_s:=\sup_{Q_{s}(0, \fr{s^2}{4})\cap \O_T} \ol{u},\quad\mbox{and}\quad m_s:=\inf_{Q_{s}(0, \fr{s^2}{4})\cap \O_T} \ul{u}
\]
for $s\in(0, 2\e]$. Therefore, as in the Case I, we can find a modulus of continuity $\omega_0$ such that 
\[
|u(x, t)-u(y, s)|\leq \omega_0(d((x, t), (y, s)))\quad\mbox{for } (x, t), (y, s)\in \ol\O\ti[0, \e^2].
\]\end{proof}

\begin{rem}
As in Remark \ref{rem:holder}, if we suppose that $\varphi$, $\psi\in C^{\alpha_1}(\ol \O_T \setminus \O_T^{2\e})$, and $g\in C^{\alpha_1}(\p_p \O_T)$ for $\alpha_1\in(0, 1)$, then $u\in C^{\alpha_2}(\ol \O_T \setminus \O_T^\e)$ holds for some $\alpha_2\in(0, \alpha_0\wedge\alpha_1]$
\end{rem}

\begin{proof}[Proof of Theorem \ref{cor:Global}]
In view of Lemma \ref{paralemlocal} and \ref{paralemboundary}, we immediately obtain the conclusion.
\end{proof}

\section{Existence results}

In this section, we present an existence result of $L^p$-viscosity solutions of (\ref{E0}) under suitable conditions when obstacles are merely continuous. 

Using the parabolic mollifier by $\rho\in C^\infty_0(\R^{n+1})$ with $\rho\geq0$ in $\R^{n+1}$, $\rho\equiv 0$ in $\R^{n+1}\setminus Q_1$ and 
$\iint_{R^{n+1}} \rho \, dxdt=1$, we introduce smooth approximations of $f$, $\mu$ and $F$ by 
\[
f_\e:=f\ast\rho_\e,\quad \mu_\e:=\mu\ast\rho_\e\]
and
\[F_\e(x, t, \xi, X):=\iint_{\R^{n+1}}\rho_\e(x-y, t-s)F(y, s, \xi, X)\, dyds
\]
for $(x, t, \xi, X)\in \R^{n+1}\ti\R^n\ti S^n$, where $\rho_\e(x, t):=\e^{-n-2}\rho(\fr x\e, \fr{t}{\e^2})$. Here and later, we use the same notion $f$, $\mu$ and $F$ for their zero extension outside of $\O_T$. Under (\ref{paraAf}), (\ref{paraAmu}) and (\ref{paraAF}), it is easy to observe that for $(x, t, \xi, X)\in\R^{n+1}\ti\R^n\ti S^n$, 
\begin{equation}\label{Approxprop}
\le\{
\begin{array}{ll}
(i) &\P^-(X)-\mu_\e(x, t)|\xi|\leq F_\e(x, t, \xi, X)\leq \P^+(X)+\mu_\e(x, t)|\xi|,\\
(ii)& \|f_\e\|_{L^p(\R^{n+1})}\leq \|f\|_{L^p(\O_T)},\\
(iii)& \|\mu_\e\|_{L^q(\R^{n+1})}\leq \|\mu\|_{L^q(\O_T)}. 
\end{array}\ri.
\end{equation}

Furthermore, we shall suppose that $\varphi$ and $\psi$ are defined in a neighborhood of $\ol \O_T$ with the same modulus of continuity. More precisely, there is $\e_1>0$ such that for $(x, t), (y, s)\in N_{\e_1}$, 
\begin{equation}\label{paraAphipsi}
|\varphi(x, t)-\varphi(y, s)|\vee|\psi(x, t)-\psi(y, s)|\leq \sigma_0(d((x, t), (y, s))), 
\end{equation}
where $N_{\e_1}:=\{(x, t)\in \R^{n+1} : \dist((x, t), \O_T)<\e_1\}$. Under (\ref{paraAphipsi}), we define $\varphi_\e$ and $\psi_\e$ as follows: \[
\varphi_\e:=\varphi\ast\eta_\e-\sigma_0(\sqrt{2}\e), \quad\mbox{and}\quad \psi_\e:=\psi\ast\eta_\e+\sigma_0(\sqrt{2}\e).
\]
It is easy to see that for $\e\in(0, \fr{\e_1}{\sqrt{2}})$, \[
\varphi_\e\leq g\leq \psi_\e\quad\mbox{on }\p_p \O_T, 
\]
and 
\[
|\varphi_\e(x, t)-\varphi_\e(y, s)|\vee|\psi_\e(x, t)-\psi_\e(y, s)|\leq \sigma_0(d((x, t), (y, s)))
\]
for $(x, t), (y, s)\in \ol \O_T$. 

For $\e>0$ and $\delta>0$, we shall consider approximate equations: 
\begin{equation}\label{paraApproximate}
u_t+F_\e(x, t, Du, D^2u)+\fr{1}{\delta}(u-\psi_\e)^+-\fr{1}{\delta}(\varphi_\e-u)^+=f_\e\quad\mbox{in }\O_T.
\end{equation}

In order to apply an existence result in \cite{CKLS}, we shall suppose the uniform exterior cone condition on $\O$ in \cite{M}, which is stronger than (\ref{paraAboundary}).
\begin{prop}\label{prop:paraEx}
Under $(\ref{paraApq})$, $(\ref{paraAf})$, $(\ref{paraAmu})$, $(\ref{paraAF})$, $(\ref{paraAob})$, $(\ref{paraAconti})$ and $(\ref{paraAphipsi})$, we assume the uniform exterior cone condition on $\O$. Then, there exists a $C$-viscosity solution $u_\e^\delta\in C(\ol \O_T)$ of $(\ref{paraApproximate})$ satisfying (\ref{paraBC}).
\end{prop}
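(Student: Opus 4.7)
The plan is to rewrite (\ref{paraApproximate}) as a single parabolic PDE with continuous, proper ingredients and invoke the existence theorem of \cite{CKLS}. Define
\[
G_\e^\d(x, t, r, \xi, X) := F_\e(x, t, \xi, X) + \fr{1}{\d}(r - \psi_\e(x, t))^+ - \fr{1}{\d}(\varphi_\e(x, t) - r)^+ - f_\e(x, t),
\]
so that (\ref{paraApproximate}) becomes $u_t + G_\e^\d(x, t, u, Du, D^2u) = 0$. First I would verify that $G_\e^\d$ is jointly continuous on $\ol\O_T \ti \R \ti \R^n \ti S^n$, which follows from the smoothness of $F_\e$, $f_\e$, $\mu_\e$ in $(x, t)$, the continuity of $F_\e$ in $(\xi, X)$, and the Lipschitz character of $r \mapsto (r - a)^+$. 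In particular, $C$-viscosity solutions of this equation are well-defined and the $L^p$-viscosity and $C$-viscosity notions agree.

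Next I would check the structural hypotheses needed for comparison and Perron's method. By $(\ref{Approxprop})$(i), $G_\e^\d$ satisfies the $(\l, \L, \mu_\e)$-structure condition with respect to the Pucci operators in the $(\xi, X)$-variables. Moreover, the map $r \mapsto \fr{1}{\d}(r - \psi_\e)^+ - \fr{1}{\d}(\varphi_\e - r)^+$ is non-decreasing, so $G_\e^\d$ is proper in $r$. Combined with the continuity of all ingredients, this yields the comparison principle for bounded $C$-viscosity sub- and super-solutions via the standard doubling-the-variable argument of \cite{CIL}. A priori bounds are automatic: if $u_\e^\d$ is any bounded sub-solution, the penalty $\fr{1}{\d}(u_\e^\d-\psi_\e)^+$ forces $u_\e^\d \le \psi_\e + C\d$ (and symmetrically $u_\e^\d \ge \varphi_\e - C\d$), where $C$ depends on $\|f_\e\|_{L^\infty}$ and the $L^\infty$ norm of $F_\e$ on a bounded set.

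To handle the Cauchy-Dirichlet data, the uniform exterior cone condition on $\O$ supplies, at each $(x_0, t_0) \in \p_p\O_T$, classical barrier functions in the sense of \cite{M}, and continuity of $g$ on $\p_p\O_T$ allows one to pinch these barriers to $g(x_0, t_0)$ at that point. The penalty terms do not obstruct this: any classical super-solution of $u_t + F_\e - f_\e = 0$ that is bounded above by $\psi_\e + C\d$ is automatically a super-solution of the penalized equation, and symmetrically for sub-solutions with a lower bound by $\varphi_\e - C\d$. Assembling the bounded sub-solution, the comparison principle, and Perron's method as in \cite{CKLS} then produces a $C$-viscosity solution $u_\e^\d \in C(\ol\O_T)$ of (\ref{paraApproximate}) satisfying (\ref{paraBC}).

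The main obstacle is the boundary regularity step: one must produce at each $(x_0, t_0) \in \p\O \ti (0, T]$ (and similarly for $t_0=0$) a local pair of continuous sub- and super-solutions of the penalized equation that touch $g(x_0, t_0)$ at $(x_0, t_0)$ and respect the envelope $[\varphi_\e - C\d, \psi_\e + C\d]$. This is precisely where the uniform exterior cone condition from \cite{M} is used, via barriers of the form $\chi(d((x,t),(x_0,t_0)))$ with $\chi$ chosen so that the Pucci operator applied to the barrier dominates $\mu_\e|D\chi| + \|f_\e\|_{L^\infty}$; continuity of $u_\e^\d$ up to $\p_p\O_T$ then follows from the standard sandwiching argument.
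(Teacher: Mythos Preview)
Your strategy is exactly what the paper has in mind: the proposition is stated without proof and is meant as a direct citation of the existence machinery in \cite{CKLS}, applied to the penalized equation with continuous (mollified) ingredients under the uniform exterior cone condition. Your reconstruction of that machinery---rewriting (\ref{paraApproximate}) as $u_t+G_\e^\d(x,t,u,Du,D^2u)=0$ with $G_\e^\d$ continuous and proper in $r$, checking the $(\l,\L,\mu_\e)$ structure from (\ref{Approxprop})(i), and then running comparison plus Perron with cone-based barriers---is precisely the content of \cite{CKLS}.

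One technical point in your barrier discussion is stated with the wrong sign. You write that a classical supersolution $w$ of $w_t+F_\e-f_\e=0$ that is \emph{bounded above by} $\psi_\e+C\d$ is automatically a supersolution of the penalized equation. In fact, for a supersolution of the penalized operator you need
\[
w_t+F_\e(x,t,Dw,D^2w)+\fr{1}{\d}(w-\psi_\e)^+-\fr{1}{\d}(\varphi_\e-w)^+-f_\e\geq 0,
\]
and the dangerous term is $-\fr{1}{\d}(\varphi_\e-w)^+\leq 0$; an upper bound on $w$ does nothing here. What you actually need is $w\geq \varphi_\e$ near the boundary point (so that the negative penalty vanishes), and symmetrically $w\leq \psi_\e$ for subsolution barriers. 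This is harmless because $\varphi_\e\leq g\leq \psi_\e$ on $\p_p\O_T$: the upper barrier at $(x_0,t_0)$ can be taken of the form $g(x_0,t_0)+\eta+C\chi$, which lies above $\varphi_\e$ in a neighbourhood by continuity, and analogously for the lower barrier. With this correction your argument goes through and matches the paper's intended appeal to \cite{CKLS}.
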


We next show an existence result for (\ref{E0}) when $\varphi$, $\psi$, $F$ and $f$ are smooth. 

\begin{thm}(cf. Theorem 2.1 in \cite{CKLS})
\label{thm:approx}
Under the same hypotheses in Proposition $\ref{prop:paraEx}$, let $u_\delta^\e\in C(\ol \O_T)$ be $C$-viscosity solutions of $(\ref{paraApproximate})$ satisfying $(\ref{paraBC})$. For each small $\e>0$, there exist $\delta_\e>0$ and $\hat C_\e>0$ such that 
\begin{equation}\label{paraEstBeta}
0\leq \fr{1}{\delta}(u_\e^\delta-\psi_\e)^++\fr{1}{\delta}(\varphi_\e-u^\delta_\e)^+\leq \hat C_\e\quad \mbox{in }\ol \O_T\quad\mbox{for } \delta\in(0, \delta_\e).  
\end{equation}
Furthermore, there exist a subsequence $\{\delta_k\}_{k=1}^\infty$ and $u_\e\in C(\ol \O_T)$ such that $\delta_k\to0$ as $k\to\infty$, $(\ref{paraBC})$ holds for $u_\e$, 
\begin{equation}\label{paraConverge}
u_\e^{\delta_k}\to u_\e \quad\mbox{uniformly in }\ol \O_T, \mbox{ as } k\to\infty,
\end{equation}
and $u_\e$ is a (unique) $C$-viscosity solution of 
 \begin{equation}\label{ApproximateEq}
\min\{\max\{u_t+F_\e(x, t, Du, D^2u)-f_\e, u-\psi_\e\}, u-\varphi_\e\}=0\quad\mbox{in }\O_T. 
  \end{equation}
\end{thm}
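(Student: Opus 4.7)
The plan is to execute the classical penalization scheme in three steps: establish the $\delta$-independent bound (\ref{paraEstBeta}) via smooth barriers; deduce equi-continuity of the family $\{u_\e^\delta\}_\delta$ from the resulting uniform bound on the right-hand side of (\ref{paraApproximate}); and identify the uniform limit $u_\e$ as a $C$-viscosity solution of (\ref{ApproximateEq}) by the standard stability argument. Throughout, $\e>0$ is fixed, so the mollified data $F_\e, f_\e, \mu_\e, \varphi_\e, \psi_\e$ are all smooth with bounds depending only on $\e$.

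For (\ref{paraEstBeta}), set $v^+ := \psi_\e + \delta M_\e$ for a constant $M_\e$ to be chosen. Since $\varphi_\e \leq \psi_\e < v^+$, the lower penalty $\fr{1}{\delta}(\varphi_\e - v^+)^+$ vanishes, and plugging $v^+$ into the left-hand side of (\ref{paraApproximate}) produces
\[
(\psi_\e)_t + F_\e(x, t, D\psi_\e, D^2\psi_\e) + M_\e.
\]
Taking $M_\e$ larger than $\sup_{\O_T}|(\psi_\e)_t + F_\e(\cdot, \cdot, D\psi_\e, D^2\psi_\e) - f_\e|$, which is finite because $\psi_\e$ and all coefficients are smooth for fixed $\e$, makes $v^+$ a classical supersolution of (\ref{paraApproximate}); moreover $v^+ \geq \psi_\e \geq g$ on $\p_p\O_T$ by (\ref{paraCDdata}). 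The map $u \mapsto \fr{1}{\delta}(u-\psi_\e)^+ - \fr{1}{\delta}(\varphi_\e - u)^+$ is nondecreasing, so the penalized equation is proper and the standard $C$-viscosity comparison principle (cf.\ \cite{CKLS}) gives $u_\e^\delta \leq v^+$ in $\ol\O_T$. A symmetric barrier $v^- := \varphi_\e - \delta M_\e$ yields $u_\e^\delta \geq v^-$, so
\[
\varphi_\e - \delta M_\e \leq u_\e^\delta \leq \psi_\e + \delta M_\e\quad\mbox{in }\ol \O_T,
\]
which is (\ref{paraEstBeta}) with $\hat C_\e := M_\e$.

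With the penalty bounded, $u_\e^\delta$ is a $C$-viscosity solution of $u_t + F_\e(x, t, Du, D^2u) = h_\e^\delta$ in $\O_T$ with $\|h_\e^\delta\|_{L^\infty(\O_T)}$ bounded independently of $\delta$. Global H\"older estimates for parabolic $C$-viscosity solutions of uniformly parabolic equations with bounded inhomogeneous term (Proposition \ref{prop:Holder} inside the domain, combined with the boundary argument of Lemma \ref{paralemboundary}) furnish a modulus of continuity for $\{u_\e^\delta\}_\delta$ in $\ol \O_T$ depending only on $\e$. Arzel\`a--Ascoli then yields $\delta_k \to 0$ and $u_\e \in C(\ol \O_T)$ with $u_\e^{\delta_k} \to u_\e$ uniformly; the boundary condition (\ref{paraBC}) is preserved in the limit, and (\ref{paraEstBeta}) forces $\varphi_\e \leq u_\e \leq \psi_\e$.

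The identification of $u_\e$ as a $C$-viscosity solution of (\ref{ApproximateEq}) follows by the usual stability argument. If $\eta \in C^{2,1}$ and $u_\e - \eta$ has a strict local maximum at $(x_0, t_0) \in \O_T$ with $u_\e(x_0, t_0) > \varphi_\e(x_0, t_0)$, uniform convergence produces local maxima $(x_k, t_k) \to (x_0, t_0)$ of $u_\e^{\delta_k} - \eta$ with $u_\e^{\delta_k}(x_k, t_k) > \varphi_\e(x_k, t_k)$ for large $k$, so the lower penalty vanishes there. The subsolution inequality for $u_\e^{\delta_k}$, together with the nonnegativity of the remaining upper penalty, collapses to $\eta_t + F_\e(x_k, t_k, D\eta, D^2\eta) - f_\e(x_k, t_k) \leq 0$; passing to the limit, combined with $u_\e \leq \psi_\e$, yields the required $\min\leq 0$ inequality. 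When $u_\e(x_0, t_0) = \varphi_\e(x_0, t_0)$, (\ref{ApproximateEq}) holds trivially. The supersolution case is symmetric, and uniqueness of $u_\e$ follows from comparison for the smooth obstacle problem. The main obstacle is the comparison step in the barrier argument: the signed penalty term at first threatens properness, but once one recognizes that the combined $u$-dependence is monotone, properness is restored and the standard $C$-viscosity comparison principle applies.
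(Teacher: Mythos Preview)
Your argument is correct, and the overall scheme (penalty bound $\Rightarrow$ equi-continuity $\Rightarrow$ compactness $\Rightarrow$ stability) matches the paper. The one place where you genuinely diverge is in the proof of (\ref{paraEstBeta}).

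The paper obtains (\ref{paraEstBeta}) by a direct test-function argument: at an interior maximum point $(x_0,t_0)$ of $\frac{1}{\delta}(\varphi_\e-u_\e^\delta)^+$ (interior because $u_\e^\delta=g\geq\varphi_\e$ on $\p_p\O_T$), the function $u_\e^\delta-\varphi_\e$ attains a minimum, so plugging the smooth $\varphi_\e$ as test function into the supersolution inequality for (\ref{paraApproximate}) gives
\[
\frac{1}{\delta}(\varphi_\e-u_\e^\delta)^+(x_0,t_0)\leq (\varphi_\e)_t+F_\e(x_0,t_0,D\varphi_\e,D^2\varphi_\e)-f_\e,
\]
and symmetrically for the upper penalty. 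Your route instead builds the barriers $v^\pm=\psi_\e+\delta M_\e,\ \varphi_\e-\delta M_\e$ and invokes the comparison principle for the penalized equation. Both give the same bound with the same $\hat C_\e$; the paper's argument is a little more self-contained (no comparison principle needed), while yours fits the classical penalization template and makes the role of the smoothness of $\varphi_\e,\psi_\e$ equally transparent.

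Two small remarks. First, Lemma \ref{paralemboundary} as stated is for the obstacle problem (\ref{E0}), not for (\ref{paraApproximate}); what you actually need near $\p_p\O_T$ is the simpler obstacle-free version of that argument for $u_t+F_\e(x,t,Du,D^2u)=h_\e^\delta$ with $h_\e^\delta\in L^\infty$, which is standard and indeed easier. Second, for the identification step you treat the subsolution case while the paper treats the supersolution case; the two arguments are mirror images and both rely, as you note, on $\varphi_\e\leq u_\e\leq\psi_\e$, which follows from (\ref{paraEstBeta}) in the limit.
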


\begin{proof}
To prove the estimate on $\fr{1}{\delta}(\varphi_\e-u_\e^\delta)^+$, independent of $\delta$, we let $(x_0, t_0)\in \ol \O_T$ satisfy that 
\[
\max_{\ol \O_T}\fr{1}{\delta}(\varphi_\e-u_\e^\delta)^+=\fr{1}{\delta}(\varphi_\e-u_\e^\delta)(x_0, t_0)^+>0.\]
Thus, we observe that $u_\e^\delta-\varphi_\e$ attains its minimum at $(x_0, t_0)\in \O_T$. Hence, the definition implies 
\[
0\leq \fr{1}{\delta}(\varphi_\e-u_\e^\delta)^+\leq (\varphi_\e)_t+F_\e(x_0, t_0, D\varphi_\e, D^2\varphi_\e)-f_\e\quad \mbox{at } (x_0, t_0)
\]
because of $\fr{1}{\delta}(u_\e^\delta-\psi_\e)(x_0, t_0)^+=0$.

It follows from the same argument that the estimate on $\fr{1}{\delta}(u_\e^\delta-\psi_\e)^+$ holds. Thus, we conclude the first assertion (\ref{paraEstBeta}). This implies the $L^\infty$ bound of $u_\e^\delta$ independent of $\delta\in(0, 1)$ for each $\e\in(0, 1)$. 

By regarding the penalty term as the right hand side, it is standard to establish the equi-continuity and uniform boundedness of $\{u_\e^\delta\}_{\delta>0}$ for each $\e>0$. Therefore, by Ascoli-Arzel\`a theorem, we can find a subsequence $\{u_\e^{\delta_k}\}_{k=1}^\infty$ and $u_\e\in C(\ol \O_T)$ satisfying (\ref{paraConverge}). 

We shall show that $u_\e$ is a $C$-viscosity supersolution of (\ref{ApproximateEq}) by contradiction. Thus, we suppose that $u_\e-\eta$ attains its local strict minimum at $(x_0, t_0)\in \O_T$ for $\eta\in C^{2, 1}(\O_T)$, and 
\begin{equation}\label{Verify}
\min\{\max\{\eta_t+F_\e(x_0, t_0, D\eta, D^2\eta)-f_\e, u_\e-\psi_\e\}, u_\e-\varphi_\e\}\leq -2\theta\mbox{ at }(x_0, t_0)
\end{equation}
for some $\theta>0$. By the uniform convergence, we may suppose that $u_\e^{\delta_k}-\eta$ attains its local minimum at $(x_{\delta_k}, t_{\delta_k})\in \O_T$, where $(x_{\delta_k}, t_{\delta_k})\to (x_0, t_0)$ as $k\to\infty$. For simplicity, we shall write $\delta$ for $\delta_k$. 

By (\ref{Verify}), since we may suppose that  
\[
(u_\e^\delta-\psi_\e)(x_\delta, t_\delta)\leq -\theta\quad\mbox{for small } \delta>0, 
\]
we have $\fr{1}{\delta}(u_\e^\delta-\psi_\e)^+=0$ at $(x_\delta, t_\delta)$. Hence, sending $k\to\infty$ in (\ref{paraApproximate}) with $\delta=\delta_k$, we obtain
\[
\eta_t+F_\e(x_0, t_0, D\eta, D^2\eta)\geq f_\e\quad\mbox{at }(x_0, t_0),
\]
which together with (\ref{Verify}) yields 
\[
(u_\e^\delta-\varphi_\e)(x_\delta, t_\delta)\leq -\theta
\]
for small $\delta>0$. However, this together with (\ref{paraEstBeta}) yields a contradiction for large $k\geq 1$. 
\end{proof}

Now, we shall show our proof of Theorem \ref{thm:paraEx}. 

\begin{proof}[Proof of Theorem \ref{thm:paraEx}]
Let $u_\e\in C(\ol \O_T)$ be $C$-viscosity solutions of (\ref{ApproximateEq}) satisfying (\ref{paraBC}) in Theorem \ref{thm:approx}. In view of Lemma 2.9 in \cite{CKS}, since $F_\e$ and $f_\e$ are continuous, it is known to see that $u_\e$ is an $L^p$-viscosity solution of (\ref{ApproximateEq}). Furthermore, recalling (\ref{Approxprop}), from Theorem \ref{cor:Global}, there is a modulus of continuity $\o_0$, independent of $\e$, such that 
\[
|u_\e(x, t)-u_\e(y, s)|\leq \o_0(d((x, t), (y, s)))\quad\mbox{for }(x, t), (y, s)\in \ol \O_T. 
\]
This together with Proposition \ref{prop:bounded} implies that 
there are a subsequence $\e_k>0$ and $u\in C(\ol \O_T)$ such that $\e_k\to 0$ as $k\to\infty$, and $u_{\e_k}$ converges to $u$ uniformly in $\ol \O_T$. In what follows, we shall write $\e$ for $\e_k$. 

It remains to show that $u$ is an $L^p$-viscosity solution of (\ref{E0}). Suppose by contradiction that for some $\eta\in W^{2, 1}_p(\O_T)$, $u-\eta$ attains its local strict minimum at $(x_0, t_0)\in \O_T$, and 
\[
\min\{\max\{\eta_t+F(x, t, D\eta, D^2\eta)-f, u-\psi\}, u-\varphi\}\leq -2\theta
\]
a.e. in $Q_{2r}(x_0, t_0)\Subset \O_T$ for some $\theta$, $r>0$. For the sake of simplicity, we may suppose that $(x_0, t_0)=(0, 0)$. 
Since we may suppose that for small $\e>0$,
\[
(u_\e-\psi_\e)\leq -\theta \quad \mbox{in }Q_r,
\]
it suffices to consider the case when $u_\e$ is an $L^p$-viscosity supersolution of 
\begin{equation}\label{Subexist}
\min\le\{u_t+F_\e(x, t, Du, D^2u)-f_\e, u-\varphi_\e\ri\}=0\quad\mbox{in }Q_r.
\end{equation}
Thus, Proposition \ref{prop:bounded} implies that
\[
u\geq \varphi\quad\mbox{in }Q_r.
\]
Hence, $\eta\in W^{2, 1}_{p}(Q_{r})$ satisfies that 
\begin{equation}\label{eq:conclude}
\eta_t+F(x, t, D\eta, D^2\eta)\leq f-\theta\quad\mbox{a.e. in }Q_r.
\end{equation}

On the other hand, following the argument in the proof of Theorem 4.1 in \cite{CCKS}, since $u_\e$ is an $L^p$-viscosity supersolution of (\ref{Subexist}) together with the uniform convergence of $u_\e$ to $u$, we see that $u$ is an $L^p$-viscosity supersolution of
\[
u_t+F(x, t, Du, D^2u)-f=0\quad\mbox{in }Q_r,
\]
which contradicts (\ref{eq:conclude}). In order to apply the stability result in \cite{CKS}, we only note that $\mu D\eta\in L^p(Q_r)$ holds true since $q>n+2$, and $\eta\in W^{2, 1}_{p}(Q_r)$ for $q\geq p$ though $\mu$ may not be in $L^\infty$ in (\ref{paraAF}).
\end{proof}

\section{Local H\"older continuity of the space derivative}

\subsection{Estimates in the non-coincidence set}

We first note that $L^p$-viscosity solutions $u\in C(\O_T)$ of $(\ref{E0})$ are also $L^p$-viscosity solutions of 
\begin{equation*}
u_t+F(x, t, Du, D^2u)-f=0\quad\mbox{in }N[u].	
\end{equation*}

For any compact set $K\Subset N[u]$, where $u\in C(\O_T)$ is an $L^p$-viscosity solution of $(\ref{E0})$, we first show that $Du\in C^{\beta}(K)$ for some $\beta\in(0, 1)$. 
\begin{prop}\label{CKS}(cf. Theorem 7.3 in \cite{CKS})
Assume $(\ref{paraApq2})$, $(\ref{paraAf})$, $(\ref{paraAmu})$, $(\ref{paraAF})$. 
Let $\beta\in(0, \hat\beta\wedge\beta_0)$ be fixed, where $\hat\beta$ is from Proposition $\ref{prop:C1}$, and $\beta_0=1-\fr{n+2}{p}$. 
Then, there are $\delta_0>0$ and $r_1>0$, depending on $n$, $\L$, $\l$, $p$, $q$ and $\beta$, such that if $u\in C(\O_T)$ is an $L^p$-viscosity solution of $(\ref{E0})$, and if $(\ref{paraAF2})$ holds for $\e=r_1$, then $u\in C^{1, \beta}(N_{r_1}[u])$. Moreover, if $Q_{2R}(x, t)\subset N_{r_1}[u]$, then there is $C>0$, depending on $n$, $\L$, $\l$, $p$, $q$, $\beta$, $\delta_0$, $r_1$, $\|\mu\|_{L^q(\O_T)}$, such that \[
\fr{|Du(y, s)-Du(z, \tau)|}{d((y, s), (z, \tau))^{\beta}}\leq \fr{C}{R^{1+\beta}}\le(\|u\|_{L^\infty(\O_T)}+R^{1+\beta_0}\|f\|_{L^p(\O_T)}\ri)\] 
for $(y, s), (z, \tau)\in Q_{R}(x, t)$ with $(y, s)\neq(z, \tau)$.  
\end{prop}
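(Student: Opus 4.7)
My plan is to first reduce the bilateral obstacle problem to an unconstrained PDE on the non-coincidence set, and then invoke the interior $C^{1,\beta}$ theory of \cite{CKS} with a scaling argument. The key observation is that, inside $N[u]$, neither obstacle is active, so the minimum in (\ref{E0}) must be realized by the middle term. More precisely, I claim that if $u$ is an $L^p$-viscosity solution of (\ref{E0}) and $(x_0,t_0)\in N_{r_1}[u]$, then on a parabolic neighborhood of $(x_0,t_0)$ lying entirely in $N[u]$, the function $u$ is an $L^p$-viscosity solution of
\[
u_t+F(x,t,Du,D^2u)-f=0.
\]
This follows directly from the definition: any test function $\eta\in W^{2,1}_p$ touching $u$ from above (resp.\ below) at a point where $\varphi<u<\psi$ yields the correct one-sided inequality, because the outer $\min\{\cdot,u-\varphi\}$ and the inner $\max\{\cdot,u-\psi\}$ reduce to the PDE term under strict inequalities $u<\psi$ and $u>\varphi$.

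Next I would appeal to Theorem 7.3 in \cite{CKS}, which provides interior $C^{1,\beta}$ regularity for $L^p$-viscosity solutions of uniformly parabolic fully nonlinear equations with unbounded first-order coefficient $\mu\in L^q$ and right-hand side $f\in L^p$, under the smallness of the $L^{n+2}$-oscillation of $F$ measured by $\theta$. Fix $\beta\in(0,\hat\beta\wedge\beta_0)$. The cited theorem furnishes a threshold $\delta_0=\delta_0(n,\Lambda,\lambda,p,q,\beta)$ and a scale $r_1=r_1(n,\Lambda,\lambda,p,q,\beta,\|\mu\|_{L^q(\O_T)})$ such that whenever (\ref{paraAF2}) holds on scales $r\in(0,r_1]$ around every point of $N_{r_1}[u]$, one obtains $u\in C^{1,\beta}$ locally in $N_{r_1}[u]$ together with an interior $C^{1,\beta}$ estimate on any parabolic cylinder $Q_{2R}(x,t)\subset N_{r_1}[u]$.

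To derive the precise estimate in the statement, I would rescale. For $Q_{2R}(x,t)\subset N_{r_1}[u]$, set
\[
\tilde u(y,s):=\frac{1}{R}\bigl(u(x+Ry,\,t+R^2s)-u(x,t)\bigr),\qquad (y,s)\in Q_2,
\]
so that $\tilde u$ is an $L^p$-viscosity solution in $Q_2$ of a rescaled equation with coefficients
\[
\tilde F(y,s,\xi,X)=R\,F\!\bigl(x+Ry,t+R^2s,\xi,R^{-1}X\bigr),\quad \tilde\mu(y,s)=R\,\mu(x+Ry,t+R^2s),
\]
and right-hand side $\tilde f(y,s)=R\,f(x+Ry,t+R^2s)$. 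The rescaled oscillation $\tilde\theta$ still satisfies the smallness (\ref{paraAF2}), and the rescaled $\tilde\mu$ has $L^q$-norm bounded by $R^{1-(n+2)/q}\|\mu\|_{L^q(\O_T)}$, which is controlled since $q>n+2$. Applying the $C^{1,\beta}$ estimate in $Q_1$ to $\tilde u$ produces
\[
[D\tilde u]_{C^\beta(Q_1)}\leq C\bigl(\|\tilde u\|_{L^\infty(Q_2)}+\|\tilde f\|_{L^p(Q_2)}\bigr),
\]
and unwinding the scaling (using $\|\tilde f\|_{L^p(Q_2)}\leq R^{1+\beta_0}\|f\|_{L^p(\O_T)}$ via $\beta_0=1-(n+2)/p$, and $\|\tilde u\|_{L^\infty}\leq 2\|u\|_{L^\infty(\O_T)}/R$) yields the claimed bound with an extra $R^{-(1+\beta)}$ prefactor.

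The technical heart of the argument is really the cited theorem of \cite{CKS}; my contribution is just the reduction to the PDE on $N[u]$ and the bookkeeping of scaling factors. The main subtlety to watch is the first step: one must verify that the non-coincidence condition persists on a full parabolic neighborhood of each point of $N_{r_1}[u]$ so that the touching-test-function arguments defining the $L^p$-viscosity notion go through without interference from the obstacle terms. This is built into the definition $N_{r_1}[u]=\{(x,t)\in\O_T^{r_1}:\dist((x,t),C^\pm[u])>r_1\}$, which guarantees exactly the required separation.
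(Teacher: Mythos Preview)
Your reduction of the bilateral problem to the unconstrained PDE on $N[u]$ is exactly right and is the starting point the paper uses as well. However, the paper does \emph{not} simply quote Theorem~7.3 of \cite{CKS} as a black box; it re-runs the full Caffarelli-type iteration. Concretely, the paper fixes a small dyadic scale $\tau$, normalizes $u$ by a constant $N$ involving $\|u\|_{L^\infty}$ and a scale-invariant quantity built from $\|f\|_{L^{n+2}}$, and then constructs inductively affine functions $\ell_k(x)=a_k+\langle b_k,x\rangle$ so that the rescaled errors $v_k(x,t)=\tau^{-k(1+\beta)}(\hat u(\tau^k x,\tau^{2k}t)-\ell_k(\tau^k x))$ stay bounded and equi-H\"older in $Q_1$. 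The inductive step rests on a separately proved approximation lemma (Lemma~\ref{prop:approximate}) obtained by a compactness/contradiction argument, together with Proposition~\ref{prop:C1} for the limiting constant-coefficient equation. The author says explicitly that a proof is given ``because we need some modification'': the iteration has to track the drift term $G_j$ generated by subtracting the affine part (the parameter $\zeta^\ast=\tau^{-j\beta}b_j$), and the compactness lemma requires $\|\mu\|_{L^{p'}}$-smallness with $p'\in(n+2,p)$ rather than $p'=n+2$ in order to get equi-continuity of the approximating sequence. Your direct citation would be fine if Theorem~7.3 of \cite{CKS} applied verbatim, but the paper evidently judged that the precise dependence of $\delta_0$, $r_1$ and the estimate on $\|\mu\|_{L^q}$, together with the form of hypothesis~(\ref{paraAF2}) localized to $N_\varepsilon[u]$, warranted reproving the iteration.

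A small bookkeeping slip: with your scaling $\tilde u(y,s)=R^{-1}(u(x+Ry,t+R^2s)-u(x,t))$ one has $\tilde f(y,s)=R\,f(x+Ry,t+R^2s)$, hence $\|\tilde f\|_{L^p(Q_2)}=R^{1-(n+2)/p}\|f\|_{L^p(Q_{2R})}=R^{\beta_0}\|f\|_{L^p}$, not $R^{1+\beta_0}$. The final estimate still comes out as stated once you undo the scaling, since $[D\tilde u]_{C^\beta(Q_1)}=R^{\beta}[Du]_{C^\beta(Q_R)}$ and $\|\tilde u\|_{L^\infty}\le 2R^{-1}\|u\|_{L^\infty}$, so dividing by $R^{\beta}$ produces the factor $R^{-(1+\beta)}$ in front of $\|u\|_{L^\infty}+R^{1+\beta_0}\|f\|_{L^p}$.
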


Before going to the proof of Proposition \ref{CKS}, we first show a lemma corresponding to Lemma 6.3 in \cite{CKS}.
    
In the next lemma, for a modulus of continuity $\sigma$ and a constant $\k>0$,
we use the space
\[
C(\sigma, \k; \ol{Q}_1):=\le\{h\in C(\ol{Q}_1)
\le.: 
\begin{array}{ll}
|h(x, t)-h(y, s)|\leq \sigma(d((x, t), (y, s))) \mbox{ for} \\
(x, t), (y, s)\in \p_p Q_1, \mbox{ and } \|h\|_{L^\infty(Q_1)}\leq \k
\end{array}\ri. \ri\}.
\]
For $\zeta^\ast\in \R^n$, which will be fixed in the proof of Proposition \ref{CKS}, we also introduce
\[
G(x, t, \xi, X):=F(x, t, \xi+\zeta^\ast, X)-F(x, t, \xi, X). 
\]
Note that
\[
g^\ast(x, t):=\sup\le\{|G(x, t, \xi, X)| : \xi\in\R^n, X\in S^n\ri\}\leq |\zeta^\ast|\mu(x, t).
\]

\begin{lem}\label{prop:approximate}
(cf. Lemma 6.3 in \cite{CKS})
Assume $(\ref{paraApq2})$. Let $\sigma$ be a modulus of continuity, and let $\k>0$ and $p'\in(n+2, p)$ be constants. Then, for any $\e>0$, there exists $\delta_1=\delta_1(\e, p', n, \L, \l, p, q, \sigma , \k)\in(0, 1)$ such that if $f, \mu$ and $F$ in $(\ref{paraAf})$, $(\ref{paraAmu})$ and $(\ref{paraAF})$ for $\O_T=Q_1$, respectively, satisfy   
\begin{equation}\label{eq:smallness}
\|f\|_{L^{n+2}(Q_1)}\vee\|\mu\|_{L^{p'}(Q_1)}\vee\|g^\ast\|_{L^{n+2}(Q_1)}\vee \|\theta((0, 0), \cdot)\|_{L^{n+2}(Q_1)}\leq \delta_1,
\end{equation}
then for any two $L^p$-viscosity solutions $v$ and $h\in C(\sigma, \k; \ol{Q}_1)$ of 
\[
v_t+F(x, t, Dv, D^2v)+G(x, t, Dv, D^2v)-f=0\quad\mbox{in }Q_1
\]
and
\begin{equation*}\label{eq:limit}
h_t+F(0, 0, 0, D^2h)=0\quad\mbox{in }Q_1,
\end{equation*}
respectively, satisfying $(v-h)|_{\p_p Q_1}=0$, it follows that 
\[
\|v-h\|_{L^\infty(Q_1)}\leq \e. 
\]
\end{lem}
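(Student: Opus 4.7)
\medskip

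\noindent\textbf{Proof proposal for Lemma \ref{prop:approximate}.}

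The plan is a standard compactness/contradiction argument in the spirit of Lemma 6.3 in \cite{CKS}. Suppose the conclusion fails. Then there exist $\e_0>0$ and sequences $f_k$, $\mu_k$, $g^\ast_k$, $F_k$, $G_k$, $v_k$, $h_k$ and $\sigma$, $\k$, $p'$, $\e_0$--fixed data such that the smallness bound (\ref{eq:smallness}) holds with $\delta_1$ replaced by $1/k\to 0$, each pair $(v_k,h_k)$ solves the respective equations with $(v_k-h_k)|_{\p_p Q_1}=0$, $h_k\in C(\sigma,\k;\ol{Q}_1)$, yet $\|v_k-h_k\|_{L^\infty(Q_1)}>\e_0$. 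My aim is to extract uniformly convergent subsequences of $\{v_k\}$ and $\{h_k\}$ with the same limit, contradicting this.

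First I would handle $\{h_k\}$. Each $h_k$ is a $C$-viscosity solution of $h_t+F(0,0,0,D^2h)=0$ in $Q_1$, a fully nonlinear uniformly parabolic equation with fixed Pucci bounds, boundary datum in $C(\sigma,\k;\p_p Q_1)$. Interior Hölder estimates (Proposition \ref{prop:Holder} with $\mu\equiv 0$, $f\equiv 0$) together with standard boundary barriers on the cylinder $Q_1$ (whose parabolic boundary is regular, since one can build barriers from $\sigma$ on $\p_p Q_1$) yield a uniform modulus of continuity of $\{h_k\}$ on $\ol{Q}_1$. By Arzelà--Ascoli, a subsequence $h_k\to h_\infty$ uniformly in $\ol{Q}_1$, and by the standard stability of $C$-viscosity solutions, $h_\infty$ is a $C$-viscosity solution of $h_t+F(0,0,0,D^2h)=0$ with a definite continuous boundary datum $\gamma_\infty$.

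Next I treat $\{v_k\}$. Since $v_k$ is an $L^p$-viscosity solution of a PDE whose nonlinearity satisfies $(\ref{paraAF})$ with $\mu$ replaced by $\mu_k+\mu_k$--type quantities (from $G_k$) and right-hand side $f_k$, $v_k$ is an $L^p$-viscosity sub- and supersolution of the corresponding Pucci extremal equations with coefficients uniformly small in their $L^q$ and $L^p$ norms. Proposition \ref{prop:Holder} then gives uniform interior Hölder bounds. For boundary continuity, I would use the weak Harnack near $\p_p Q_1$ argument from Lemma \ref{paralemboundary} together with barriers based on $\sigma$, as $(v_k-h_k)|_{\p_p Q_1}=0$ ensures $v_k$ inherits the modulus $\sigma$ on the boundary. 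This yields a uniform modulus of continuity on $\ol{Q}_1$, so along a further subsequence $v_k\to v_\infty$ uniformly in $\ol{Q}_1$, with $v_\infty=h_\infty$ on $\p_p Q_1$.

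Finally I pass to the limit in the equation for $v_k$. Since $\|f_k\|_{L^{n+2}}$, $\|\mu_k\|_{L^{p'}}$, $\|g^\ast_k\|_{L^{n+2}}$, and $\|\theta_k((0,0),\cdot)\|_{L^{n+2}}$ all tend to zero, the stability theorem for $L^p$-viscosity solutions (Theorem 3.8 / Proposition 6.1 type result from \cite{CKS}) shows $v_\infty$ is an $L^p$-viscosity solution of the frozen limit equation $v_t+F(0,0,0,D^2v)=0$ in $Q_1$; the hypothesis $p'>n+2$ and $q\geq p>n+2$ provides the integrability needed to absorb the gradient term $\mu_k|Dv_k|$ and the lower-order perturbation $G_k$ into error terms tending to zero. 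The comparison principle for this uniformly parabolic equation with continuous Dirichlet data then forces $v_\infty\equiv h_\infty$ in $Q_1$, contradicting $\|v_k-h_k\|_{L^\infty}>\e_0$. The main obstacle is justifying the stability step rigorously: one must verify that the $L^{n+2}$--smallness of $\theta_k((0,0),\cdot)$ and $g^\ast_k$ suffices to pass to the limit inside an $L^p$-viscosity test, and that the unbounded coefficient $\mu_k$ causes no issue precisely because $\mu_k D\eta\in L^{p'}\subset L^{n+2}$ for admissible test functions $\eta\in W^{2,1}_{p'}$ with $p'>n+2$.
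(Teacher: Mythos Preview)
Your overall strategy---compactness/contradiction, extract uniform limits of $v_k$ and $h_k$, pass to the limit via $L^p$-stability, conclude by comparison---is exactly the paper's. There is, however, one genuine gap. Since $\delta_1$ is required to depend only on $(\e,p',n,\Lambda,\lambda,p,q,\sigma,\kappa)$ and \emph{not} on $F$, the contradiction sequence forces you to take a varying sequence of operators $F_k$ (as you correctly list), but you then write the limit equations for both $h_\infty$ and $v_\infty$ as $u_t+F(0,0,0,D^2u)=0$ for some fixed $F$ that does not exist. What the paper does, and what you must do, is observe that $X\mapsto F_k(0,0,0,X)$ are equi-Lipschitz with $F_k(0,0,0,O)=0$ by the structure condition, extract a locally uniform limit $F_\infty$ on $S^n$, and then show via stability (using $\|\theta_k((0,0),\cdot)\|_{L^{n+2}}\to 0$ and $\|\mu_k\|_{L^{p'}}\to 0$) that \emph{both} $v_\infty$ and $h_\infty$ are $C$-viscosity solutions of $u_t+F_\infty(D^2u)=0$. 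Only then does the comparison principle apply to force $v_\infty\equiv h_\infty$.

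A smaller remark: you work harder than necessary on equicontinuity. By hypothesis both $v_k$ and $h_k$ lie in $C(\sigma,\kappa;\ol{Q}_1)$, so they are uniformly bounded by $\kappa$ and share the boundary modulus $\sigma$ on $\p_pQ_1$; combined with the interior H\"older estimate (which is where $\|\mu_k\|_{L^{p'}}\le 1/k$ with $p'>n+2$ enters, as the paper's remark notes), Ascoli--Arzel\`a applies directly. The paper simply invokes this in one line rather than building separate barrier arguments.
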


\begin{rem}
We notice that $\|\mu\|_{L^{p'}(Q_1)}\leq \d_1$ in $(\ref{eq:smallness})$ for $p'\in(n+2, p)$ because we do not know if the equi-continuity of $v_k$ holds true in the proof below when $p'=n+2$. 
\end{rem}

\begin{proof}
We argue by contradiction. Suppose that there are $\hat{\e}>0$, $\zeta_k^\ast\in \R^n$, $v_k$, $h_k\in C(\sigma, \k; \ol{Q}_1)$, $f_k\in L^p(Q_1)$, $\mu_k\in L^q(Q_1)$ and $F_k:Q_1\ti\R^n\ti S^n\to\R$ satisfying that 
\begin{equation}\label{Aunif}
\begin{cases}
&F_k(x, t, 0, O)=0, \mbox{ and}\\
&\P^-(X-Y)-\mu_k(x, t)|\xi-\zeta|\leq F_k(x, t, \xi, X)-F_k(x, t, \zeta, Y)\\
&\leq \P^+(X-Y)+\mu_k(x, t)|\xi-\zeta|
\end{cases}
\end{equation}
for $(x, t)\in Q_1$, $\xi$, $\zeta\in\R^n$, $X$, $Y\in S^n$ such that 
\begin{equation}\label{A:approx}
\|f_k\|_{L^{n+2}(Q_1)}\vee\|\mu_k\|_{L^{p'}(Q_1)}\vee\|g_{k}^\ast\|_{L^{n+2}(Q_1)}\vee \|\theta_{k}((0, 0), \cdot)\|_{L^{n+2}(Q_1)}\leq \fr{1}{k},
\end{equation}
where $g_{k}^\ast(x, t):=\sup\le\{|F_k(x, t, \xi+\zeta_{k}^\ast, X)-F_k(x, t, \xi, X)| : \xi\in\R^n, X\in  S^n\ri\}$, and
\[\theta_{k}((x, t), (y, s)):=\sup_{X\in S^n}\fr{|F_k(x, t, 0, X)-F_k(y, s, 0, X)|}{1+\|X\|}.\]
Furthermore, by setting 
$G_k(x, t, \xi, X):=F_k(x, t, \xi+\zeta_{k}^\ast, X)-F_k(x, t, \xi, X),$
we suppose that $v_k$ and $h_k$ are, respectively, $L^p$-viscosity solutions of 
\[
(v_k)_t+F_k(x, t, Dv_k, D^2v_k)+G_k(x, t, Dv_k, D^2v_k)-f_k=0\quad\mbox{in }Q_1
\]
and
\[
(h_k)_t+F_k(0, 0, 0, D^2h_k)=0\quad\mbox{in }Q_1,
\]
which satisfy that $(v_k-h_k)|_{\p_p Q_1}$=0, and 
\begin{equation}\label{contradiction}
\|v_k-h_k\|_{L^\infty(Q_1)}\geq\hat{\e}.
\end{equation}

In view of Ascoli-Arzel\`a theorem, we can find $v$, $h\in C(\sigma, \k; \ol{Q}_1)$ such that $v_k\to v$ and $h_k\to h$ in $C(\ol{Q}_1)$ as $k\to\infty$, and $v=h$ on $\p_p Q_1$. By (\ref{Aunif}), we may suppose that $F_k(0, 0, 0, X)$ converges $F_\infty(X)$ uniformly in any compact sets in $S^n$, which satisfies that
\begin{equation*}\label{Ainfty}
F_\infty(O)=0, \quad\mbox{and}\quad \P^-(X-Y)\leq F_\infty(X)-F_\infty(Y)\leq \P^+(X-Y).
\end{equation*}
We also notice that by (\ref{Aunif}) and (\ref{A:approx}), for $\eta\in W^{2, 1}_{n+2}(Q_1)$ and $Q_r(x_0, t_0)\Subset Q_1$, we have
\[
\|F_k(\cdot, D\eta(\cdot), D^2\eta(\cdot))-F_\infty(D^2\eta(\cdot))\|_{L^{n+2}(Q_r(x_0, t_0))}\to 0\quad\mbox{as }k\to\infty. 
\]
Hence, since $F_\infty$ is continuous, in view of Theorem 6.1 in \cite{CKS}, we verify that $v$ and $h$ are $L^{n+2}$-viscosity (thus, $C$-viscosity) solutions of 
\[
F_\infty(D^2u)=0\quad\mbox{in } Q_1.
\]
Therefore, the comparison principle implies that $v=h$ in $\ol{Q}_1$, which contradicts (\ref{contradiction}). 
\end{proof}

Although our proof of Proposition \ref{CKS} follows by the same argument as in \cite{CKS}, we give a proof because we need some modification.

\begin{proof}[Proof of Proposition \ref{CKS}]
Let $\beta\in(0, \hat\beta\wedge\beta_0)$ and $p'\in(n+2, p)$ be fixed constants, where $\hat\beta$ is from Proposition \ref{prop:C1}, and $\beta_0:=1-\fr{n+2}{p}$.  Without loss of generality, we can assume that $(x, t)=(0, 0)$ and $R=1$ hereafter. We choose 
\[
K_1=K_1\le(n, \L, \l, p, q, \|\mu\|_{L^q(Q_2)}\ri)\quad
\mbox{and}\quad 
\hat{\alpha}=\hat{\alpha}\le(n, \L, \l, p, q, \|\mu\|_{L^q(Q_2)}\ri)
\]
in Proposition \ref{prop:Holder} for $R=1$. 

For small $\tau\in(0, 1)$, which will be fixed later, setting 
\[
\e:=K_2\tau^{1+\hat{\beta}}, 
\] 
where $K_2=K_2(n, \L, \l)$ is the constant in Proposition \ref{prop:C1},
we choose $\delta_1=\delta_1(\e, p', n, \L, \l, p, q, \sigma)\in(0, 1)$ in Lemma \ref{prop:approximate} for $\k=1$, where the modulus of continuity $\sigma$ is given by 
\[
\sigma(r)=K_1 r^{\hat{\alpha}}.
\]  

Now, for $\rho\in(0, 1)$, which will be fixed later, we set $\hat{u}(x, t):=\fr{1}{N}u(\rho x, \rho^2 t),$
 where
\[N:=2\|u\|_{L^\infty(Q_2)}+\fr{2^{1+\beta}}{\delta_1}\sup_{0<r\leq 2} \fr{1}{r^{\beta}}\|f\|_{L^{n+2}(Q_r)}.\]
We notice that $N<\infty$.

For $\tau\in(0, \tau_0]$, where $\tau_0:=2^{-\fr{1}{\beta}}$, we prove by induction that there is a sequence of affine functions $\ell_k(x)=a_k+\la b_k, x \ra$ such that 
\[
v_k=v_k(x, t):=\tau^{-k(1+\beta)}\le(\hat{u}(\tau^k x, \tau^{2k}t)-\ell_k(\tau^k x)\ri)
\]
satisfies that 
\begin{equation}\label{inductive}
\begin{cases}
\mbox{(i)} &\|v_k\|_{L^\infty(Q_2)}\leq 1\\
\mbox{(ii)} &|a_k-a_{k-1}|\vee\le(\tau^{k-1}|b_k-b_{k-1}|\ri)\leq K_2\tau^{(k-1)(1+\beta)}\\
\mbox{(iii)} &|v_k(x, t)-v_k(y, s)|\leq K_1 d((x, t), (y, s))^{\hat\a}\mbox{ for }(x, t), (y, s)\in \ol Q_1,
\end{cases}
\end{equation}
where $\ell_{-1}=\ell_0\equiv 0$. 
 
 For $k=0$, since $v_0=\hat{u}$, by the definition of $N$, the inequality (i) holds for $k=0$ while (ii) is trivially satisfied for $k=0$. Since $\hat{u}$ is an $L^p$-viscosity subsolution and an $L^p$-viscosity supersolution, respectively, of 
 \[
u_t+\P^-(D^2u)-\hat{\mu}|Du|-\hat{f}=0\ \mbox{ and }\ u_t+\P^+(D^2u)+\hat{\mu}|Du|-\hat{f}=0\quad\mbox{in }Q_2,
\]
where $\hat{\mu}(x, t)=\rho\mu(\rho x, \rho^2 t)$ and $\hat{f}(x, t)=\fr{\rho^2}{N} f(\rho x, \rho^2 t)$, it follows from Proposition \ref{prop:Holder} that
\begin{align*}
|\hat u(x, t)-\hat u(y, s)|&\leq K_1d((x, t), (y, s))^{\hat \a}\le(\|\hat{u}\|_{L^\infty(Q_2)}+\fr{1}{N}\|f\|_{L^{n+2}(Q_2)}\ri)\\
&\leq K_1d((x, t), (y, s))^{\hat \a}
\end{align*}
for $(x, t)$, $(y, s)\in\ol Q_1$,
which is (iii) for $k=0$. Notice that the last inequality is derived because of our choice of $\d_1$ and $N$.

By induction, assume that (\ref{inductive}) holds for $k=j$. We observe that $v_j$ is an $L^p$-viscosity solution of  
\[
u_t+F_j(x, t, Du, D^2u)+G_j(x, t, Du, D^2u)-f_j=0\quad\mbox{in }Q_2,
\]
where 
\begin{align*}
F_j(x, t, \xi, X)&:=\fr{\rho^2 \tau^{j(1-\beta)}}{N}F\le(\rho \tau^j x, \rho^2 \tau^{2j}t, \fr{N\tau^{j\beta}}{\rho}\xi, \fr{N}{\rho^2\tau^{j(1-\beta)}}X\ri), \\
G_j(x, t, \xi, X)&:=F_j(x, t, \xi+\tau^{-j\beta}b_j, X)-F_j(x, t, \xi, X),\\
f_j(x, t)&:=\fr{\rho^2\tau^{j(1-\beta)}}{N}f(\rho \tau^j x, \rho^2 \tau^{2j}t). 
\end{align*}
We notice that $\zeta^\ast$ in $G$ in Lemma $\ref{prop:approximate}$ corresponds to $\tau^{-j\b}b_j$. 
We note that for $(x, t)\in Q_2$, $\xi$, $\zeta\in\R^n$, $X$, $Y\in S^n$, 
\begin{equation*}
\begin{cases}
&F_j(x, t, 0, O)=0, \mbox{ and}\\
&\P^-(X-Y)-\mu_j(x, t)|\xi-\zeta|\leq F_j(x, t, \xi, X)-F_j(x, t, \zeta, Y)\\
&\leq \P^+(X-Y)+\mu_j(x, t)|\xi-\zeta|,
\end{cases}
\end{equation*}
where $\mu_j(x, t):=\rho\tau^j\mu(\rho \tau^j x, \rho^2 \tau^{2j}t)$. Also, since 
\[
|G_j(x, t, \xi, X)|\leq \rho|b_j|\tau^{j(1-\beta)}\mu(\rho \tau^j x, \rho^2 \tau^{2j}t) \ \mbox{ for } (x, t, \xi, X)\in Q_2\ti\R^n\ti S^n,
\] 
setting $g_j^\ast(x, t):=\sup\{|G_j(x, t, \xi, X)| : \xi\in\R^n, X\in S^n\}$, we have
\begin{equation}\label{g_j}
\|g_{j}^\ast\|_{L^{n+2}(Q_2)}
\leq \fr{|b_j|}{\tau^{j\beta}} \|\mu\|_{L^{n+2}(Q_{2\rho \tau^j})}
\leq 2^{\beta_0}\o_n\rho^{\beta_0}\tau^{j(\beta_0-\beta)}|b_j|\|\mu\|_{L^p(Q_{2\rho\tau^j})},
\end{equation}
where $\o_n:=|B_1|^{\fr{1}{n+2}-\fr{1}{p}}$. By induction, we have 
\begin{equation}\label{b_k}
|b_j|\leq K_2 \sum_{k=0}^{j-1} \tau^{k\beta}\leq \fr{K_2}{1-\tau^\beta}\leq 2K_2
\end{equation}
because $0<\tau\leq \tau_0=2^{-\fr{1}{\beta}}$. Simple calculations together with our choice of $N$ and (\ref{g_j}) give
\begin{equation*}\label{3case}
\begin{cases}
(1) &\|f_j\|_{L^{n+2}(Q_2)}\leq \fr{\d_1}{2},\\
(2) &\|\mu_j\|_{L^{p'}(Q_2)}\leq (\rho\tau^j)^{1-\fr{n+2}{p'}}\|\mu\|_{L^{p'}(Q_{2\rho\tau^j})}\leq \rho^{1-\fr{n+2}{p'}}\|\mu\|_{L^{p'}(Q_{2\rho\tau^j})},\\
(3) &\|g_{j}^\ast\|_{L^{n+2}(Q_2)}
\leq 2^{\beta_0+1}K_2\o_n\rho^{\beta_0}\|\mu\|_{L^p(Q_{2\rho\tau^j})}.
\end{cases}
\end{equation*}
Now, we can choose $\rho\in(0, 1)$, independent of $j\geq0$, such that 
\[
\|\mu_j\|_{L^{p'}(Q_1)}\vee\|g_{j}^\ast\|_{L^{n+2}(Q_1)}\leq \delta_1, \quad\mbox{and}\quad \rho\leq \sqrt{N}.  
\]
Because $\rho\leq \sqrt{N}$, assuming (\ref{paraAF2}) for $\delta_0=\delta_1>0$,    
we have
\[
\|\theta_{j}((0, 0), \cdot)\|_{L^{n+2}(Q_1)}\leq \fr{1}{\rho\tau^j}\|\theta((0, 0), \cdot)\|_{L^{n+2}(Q_{\rho\tau^j})}\leq \delta_1,
\]
where $\theta_{j}((x, t), (y, s)):=\sup_{X\in S^n}|F_j(x, t, 0, X)-F_j(y, s, 0, X)|/(1+\|X\|)$. 

Let $h\in C(\ol{Q}_1)$ be a $C$-viscosity solution of 
\[
u_t+F_j(0, 0, 0, D^2u)=0\quad\mbox{in }Q_1
\]
satisfying $h=v_j$ on $\p_p Q_1$. Hence, in view of Lemma \ref{prop:approximate} and Proposition \ref{prop:C1}, we have 
\begin{equation}\label{eq:v}
\|v_j-h\|_{L^\infty(Q_1)}\leq \e=K_2 \tau^{1+\hat\beta}\quad\mbox{and}\quad \|h\|_{C^{1, \hat\beta}(\ol{Q}_\fr12)}\leq K_2\|h\|_{L^\infty(Q_1)}\leq K_2. 
\end{equation} 

We define 
\[
\ell_{j+1}(x):=\ell_j(x)+\tau^{j(1+\beta)}\le(h(0, 0)+\le\la Dh(0, 0), \fr{x}{\tau^{j}}\ri\ra\ri). 
\]
Since the definition of $v_{j+1}$ can be written by 
\begin{align*}
&v_{j+1}(x, t)\\
&=\tau^{-(j+1)(1+\beta)}\le(\hat{u}(\tau^{j+1}x, \tau^{2(j+1)}t)-\ell_{j+1}(\tau^{j+1}x)\ri)\\
&=\tau^{-(j+1)(1+\beta)}\le\{\hat{u}(\tau^{j+1}x, \tau^{2(j+1)}t)-\ell_j(\tau^{j+1}x)-\tau^{j(1+\beta)}\le(h(0, 0)\ri.\ri.\\
&\quad\le.\le.+\la Dh(0, 0), \tau x\ra\ri)
\ri\}\\
&=\tau^{-(1+\beta)}\le(v_j(\tau x, \tau^2 t)-h(0, 0)-\la Dh(0, 0), \tau x\ra\ri),
\end{align*} 

it follows from (\ref{eq:v}) that for $\tau\in(0, \tau_1]$, where $\tau_1:=\tau_0\wedge  (2K_2 2^{\fr{3}{2}(1+\hat\beta)})^{-\fr{1}{\hat\beta-\beta}}$, 
\begin{align}\label{eq:k+1}
\|v_{j+1}\|_{L^\infty(Q_2)}&\leq \tau^{-(1+\beta)}\le\{\|v_j-h\|_{L^\infty(Q_{2\tau})}+K_2\le(\sqrt{(2\tau)^2+4\tau^2}\ri)^{1+\hat{\beta}}\ri\}\nonumber\\
&\leq \tau^{-(1+\beta)}\le( K_2 \tau^{1+\hat{\beta}}+K_22^{\fr{3}{2}(1+\hat{\beta})}\tau^{1+\hat{\beta}}\ri)\nonumber\\
&\leq 2K_2 2^{\fr{3}{2}(1+\hat{\beta})}\tau^{\hat{\beta}-\beta},\end{align}
which yields (i) of (\ref{inductive}) for $k=j+1$. 
Noting that $a_{j+1}-a_j=\tau^{j(1+\beta)}h(0, 0)$, $b_{j+1}-b_j=\tau^{j\beta}Dh(0, 0)$ and the second inequality of (\ref{eq:v}), we obtain (ii) of (\ref{inductive}) for $k=j+1$. 

It remains to show (iii) of (\ref{inductive}) for $k=j+1$. 
Since $v_{j+1}$ is an $L^p$-viscosity subsolution and an $L^p$-viscosity supersolution, respectively of   
\[
u_t+\P^-(D^2u)-\mu_{j+1}|Du|-f_{j+1}-g_{j+1}^\ast=0 \quad\mbox{in }Q_2\]
and
\[
u_t+\P^+(D^2u)+\mu_{j+1}|Du|-f_{j+1}+g_{j+1}^\ast=0\quad\mbox{in }Q_2,
\]
 by Proposition \ref{prop:Holder} and (\ref{eq:k+1}), we have 
 \begin{align}\label{v_k+1}
\fr{|v_{j+1}(x, t)-v_{j+1}(y, s)|}{d((x, t), (y, s))^{\hat\a}}
\leq K_1\le(2K_2 2^{\fr{3}{2}(1+\hat\beta)}\tau^{\hat\beta-\beta}+\fr{\d_1}{2}+\|g_{j+1}^\ast\|_{L^{n+2}(Q_2)}\ri)
\end{align}
for $(x, t)$, $(y, s)\in \ol Q_1$ with $(x, t)\neq (y, s)$. By the same manner as (\ref{g_j}) and (\ref{b_k}), we have 
\begin{align*}
\|g_{j+1}^\ast\|_{L^{n+2}(Q_2)}&\leq 2^{\b_0}\o_n\rho^{\beta_0}\tau^{(j+1)(\beta_0-\beta)}|b_{j+1}|\|\mu\|_{L^p(Q_{2\rho\tau^{j+1}})}\\
&\leq 2^{1+\b_0}\o_n\tau^{\beta_0-\beta}K_2\|\mu\|_{L^p(Q_2)}.
\end{align*}
Hence, combining this inequality with (\ref{v_k+1}), we can choose smaller $\tau>0$, if necessary, to obtain (iii) of (\ref{inductive}) for $k=j+1$. 

By (ii) of (\ref{inductive}), we find $a_\infty\in\R$ and $b_\infty\in\R^{n}$ such that $(a_k, b_k)\to(a_\infty, b_\infty)$ as $k\to\infty$. For any $(x, t)\in Q_1$, we choose $k\in\N\cup\{0\}$ such that 
\[
(x, t)\in Q_{\tau^k}\setminus Q_{\tau^{k+1}}. 
\]
Since $(x, t)\not\in Q_{\tau^{k+1}}$, (i) of (\ref{inductive}) implies
\[
|\hat{u}(x, t)-a_k-\la b_k, x\ra|\leq \tau^{k(1+\beta)}\leq\tau^{-1-\beta}\le(|x|^2+|t|\ri)^{\fr{1+\beta}{2}}. 
\]
By sending $k\to\infty$ in the above, it follows 
\[
|\hat{u}(x, t)-a_\infty-\la b_\infty, x\ra|\leq\tau^{-1-\beta}\le(|x|^2+|t|\ri)^{\fr{1+\beta}{2}}. 
\]
Therefore, we obtain the local H\"older continuity of $Du$ with exponent $\beta$ by Lemma A.1 in \cite{AP}.
\end{proof}

\subsection{Estimates near the coincidence set}

Following the idea in \cite{Sh} (see also \cite{PS}, \cite{D}), we next prove that the space derivative of $L^p$-viscosity solutions $u$ of (\ref{E0}) is H\"older continuous with the H\"older exponent $\beta_2:=\beta_0\wedge \beta_1$ at coincidence points, where $u$ touches one of the obstacles.
We remark that we cannot apply the argument for elliptic equations (cf. \cite{KT}) because $Q_r(0, -3r^2)$ and $Q_r$ in (\ref{pppWHI}) are disjoint. 

 In what follows, we use the notation
\[
Q_r^+:=B_r\ti(-r^2, r^2]\quad\mbox{and}\quad Q_r^+(x, t):=(x, t)+Q_r^+. 
\]


\begin{lem}\label{leminductive}
Assume $(\ref{paraApq2})$, $(\ref{paraAf})$, $(\ref{paraAmu})$, $(\ref{paraAF})$ and $(\ref{paraAregob})$. Then, for small $\e>0$, there exists $\hat C_0=\hat C_0(\e)>0$ such that if $u\in C(\O_T)$ is an $L^p$-viscosity solution of $(\ref{E0})$ in $\O_T$, and $(x_0, t_0)\in C^+[u]\cap \O^\e_T$ (resp., $(x_0, t_0)\in C^-[u]\cap \O^\e_T$), then it follows that  
\begin{equation*}\label{result1}
|u(x, t)-\psi(x_0, t_0)-\langle D\psi(x_0, t_0), x-x_0\rangle|\leq \hat C_0r^{1+\beta_2}
\end{equation*}
\[
\le(\mbox{resp., } |u(x, t)-\varphi(x_0, t_0)-\langle D\varphi(x_0, t_0), x-x_0\rangle|\leq \hat C_0r^{1+\beta_2}\ri)
\]
for $(x, t)\in Q_{r}^+(x_0, t_0)\cap \O_T$.
In particular, $u$ has a space derivative at $(x_0, t_0)$, and 
\[
Du(x_0, t_0)=D\psi(x_0, t_0) \quad \le(\mbox{resp., }Du(x_0, t_0)=D\varphi(x_0, t_0)\ri).
\]
\end{lem}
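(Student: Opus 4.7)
I treat the case $(x_0, t_0) \in C^+[u]$; the case $(x_0, t_0) \in C^-[u]$ follows by the symmetric argument applied to $-u$ with the roles of $\varphi$ and $\psi$ interchanged. After translation, set $(x_0, t_0) = (0, 0)$ and define the candidate tangent affine function $\ell(x) := \psi(0, 0) + \langle D\psi(0, 0), x\rangle$. The upper estimate is immediate: Proposition \ref{prop:bounded} gives $u \leq \psi$ globally, and the $C^{1, \beta_1}$-regularity of $\psi$ together with $\beta_1 \geq \beta_2$ and $r \leq \e \leq 1$ yields
\[
u(x, t) - \ell(x) \leq \psi(x, t) - \ell(x) \leq \|\psi\|_{C^{1, \beta_1}} d((x, t), (0, 0))^{1 + \beta_1} \leq C r^{1 + \beta_2}
\]
for $(x, t) \in Q_r^+(0,0) \cap \O_T$. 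The whole difficulty is the matching lower bound.

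I would obtain the lower bound by the compactness/rescaling technique of \cite{Sh}. The key structural input is Proposition \ref{prop:def}: taking $K \geq \|\psi\|_{C^{1, \beta_1}}$ and $\tilde\ell(x) := \ell(x) - K r^{1 + \beta_2}$, the $C^{1, \beta_1}$-regularity of $\psi$ together with $\beta_2 \leq \beta_1$ gives $\tilde\ell \leq \psi$ on $Q_r^+ \cap \O_T$, so $u \wedge \tilde\ell$ is an $L^p$-viscosity supersolution of
\[
w_t + \P^+(D^2 w) + \mu|Dw| + f^- = 0 \quad\mbox{in } Q_r^+ \cap \O_T,
\]
which crucially supplies an extremal PDE inequality on the \emph{entire} symmetric-in-time cylinder, not merely on the non-coincidence set $\{u < \psi\}$. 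Suppose the conclusion fails; one then finds $L^p$-viscosity solutions $u_k$, coincidence points $(x_k, t_k) \in C^+[u_k] \cap \O^\e_T$, and radii $r_k \leq \e$ with
\[
M_k := r_k^{-1-\beta_2}\|u_k - \ell_k\|_{L^\infty(Q_{r_k}^+(x_k, t_k) \cap \O_T)} \to \infty,
\]
and one rescales $w_k(y, s) := M_k^{-1} r_k^{-1-\beta_2}\bigl[u_k(x_k + r_k y, t_k + r_k^2 s) - \ell_k(x_k + r_k y)\bigr]$. By construction $\|w_k\|_{L^\infty(Q_1^+)} = 1$, $w_k(0, 0) = 0$, and each $w_k$ solves a rescaled bilateral obstacle problem whose rescaled coefficients $\tilde F_k, \tilde f_k, \tilde\mu_k$ have vanishing relevant norms and whose rescaled obstacles flatten to the zero constraint from above and below. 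Proposition \ref{prop:Holder}, applied to the extremal inequalities supplied by the Proposition \ref{prop:def} truncation, shows $\{w_k\}$ is equicontinuous on compact subsets of $Q_1^+$, and a subsequence converges uniformly to $w_\infty$ with $w_\infty(0, 0) = 0$, $\|w_\infty\|_{L^\infty(Q_1^+)} \leq 1$, solving a homogeneous extremal equation. A one-step improvement argument in the spirit of the proof of Proposition \ref{CKS}, together with the $C^{1, \hat\beta}$-estimate of Proposition \ref{prop:C1} at the limit, produces an affine function $L_k$ such that $\sup_{Q_\rho^+}|w_k - L_k| \leq C \rho^{1+\beta_2}$ for some universal $\rho \in (0, 1)$ and all large $k$, which after unscaling contradicts $M_k \to \infty$.

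The main obstacle is exactly the one flagged at the start of Section 5.2 (and implicit in the remark following Proposition \ref{prop:Holder}): the weak Harnack inequality of Proposition \ref{paraWeak} is naturally posed on backward cylinders $Q_r(0, -3r^2)$ disjoint from $Q_r$, so it cannot, on its own, deliver oscillation control across the symmetric cylinder $Q_r^+$. The compactness/rescaling route is precisely the device tailored to circumvent this: it reduces the estimate to one on a homogeneous extremal problem in $Q_1^+$, where the interior $C^{1, \hat\beta}$ regularity of Proposition \ref{prop:C1} is available on both halves of the cylinder, and the forward-time information deficit becomes a soft compactness matter rather than a direct oscillation estimate.
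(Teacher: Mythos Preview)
Your upper bound and the use of Proposition \ref{prop:def} to obtain extremal inequalities on the full symmetric cylinder are correct and match the paper's mechanism. The gap is in the contradiction step of your blow-up. With the normalization $\|w_k\|_{L^\infty(Q_1^+)} = 1$, compactness only yields $w_k \to w_\infty$ uniformly on compact subsets of the interior; the supremum of $|w_k|$ may drift to $\p_p Q_1^+$, so even $w_\infty \equiv 0$ in the interior produces no contradiction with $\|w_k\|_{L^\infty(Q_1^+)}=1$. The affine-improvement step does not close the gap either: after unscaling, $\sup_{Q_\rho^+}|w_k - L_k| \leq C\rho^{1+\beta_2}$ controls $u_k$ at scale $\rho r_k$, with a \emph{different} affine function from $\ell_k$, so it says nothing about $M_k$ at scale $r_k$. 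In addition, Proposition \ref{prop:C1} is stated for $C$-viscosity solutions of a single equation $u_t + F(D^2u) = 0$; your limit $w_\infty$ is only a subsolution of $u_t+\P^-(D^2u)=0$ and a supersolution of $u_t+\P^+(D^2u)=0$, so the $C^{1,\hat\beta}$ estimate is not available for it.

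The paper avoids these issues by splitting the argument. First, a one-step improvement (Lemma \ref{lemflat}): if $\inf_{Q_1^\ast} u \geq -1$, $u(0,0) = \Psi(0,0) = 0$, and all data are $\delta_2$-small, then $\inf_{Q_\nu^\ast} u \geq -\nu^{1+\beta_1}$ for a universal $\nu\in(0,1)$. In its compactness proof, the negated conclusion $\inf_{Q_\nu^\ast} u_k < -\nu^{1+\beta_1}$ is an \emph{interior} condition that survives the limit; since $u_\infty\leq 0$ and $u_\infty(0,0)=0$, the strong maximum principle forces $u_\infty \equiv 0$ in the backward half-cylinder, and a quadratic barrier $\eta(x,t) = -4|x|^2 - 8n\Lambda t$ handles the forward half, contradicting $\inf_{Q_\nu^\ast} u_\infty\leq -\nu^{1+\beta_1}$ for small $\nu$. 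Lemma \ref{leminductive} then follows by one preliminary rescaling by $\rho$ (chosen so that all data become $\delta_2$-small and $\inf \hat u \geq -1$) and iteration of Lemma \ref{lemflat} at scales $\nu^k$, with the tangent affine function fixed throughout as $\psi(x_0,t_0) + \langle D\psi(x_0,t_0), \cdot - x_0\rangle$.
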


Before going to the proof of Lemma \ref{leminductive}, we first show a lemma corresponding to Lemma 2.1 in \cite{Sh}. 

In the next lemma, for $\zeta^\ast\in \R^n$ and $R^\ast\geq 0$, which will be fixed in the proof of Lemma \ref{leminductive}, we introduce
\[
G(x, t, \xi, X):=F(x, t, \xi+\zeta^\ast, X)-F(x, t, \xi, X),
\]
\[
Q_r^\ast:=B_r\ti \le(-r^2, r^2\wedge (R^\ast r^2)\ri]\quad\mbox{and}\quad Q_r^\ast(x, t):=(x, t)+Q_r^\ast.
\]
We notice that $Q_r^\ast=Q_r^+$ if $R^\ast\geq 1$, and $Q_r^\ast=Q_r$ if $R^\ast=0$. 
We also note that
\[
g^\ast(x, t):=\sup_{(\xi, X)\in\R^n\ti S^n}|G(x, t, \xi, X)|\leq |\zeta^\ast|\mu(x, t) \quad\mbox{and}\quad Q_r^\ast\subset Q_r^+.
\]

\begin{lem}\label{lemflat}
Under $(\ref{paraApq2})$, we assume that
\[\Phi, \Psi\in C^{1, \beta_1}(Q_1^\ast) \quad\mbox{for }\beta_1\in(0,1), \quad\mbox{and}\quad \Phi\leq\Psi \quad\mbox{in }Q_1^\ast.\]
Then, 
there exist $\delta_2\in(0, 1)$ and $\nu\in(0, 1)$, depending on $n$, $\L$, $\l$, $p$, $q$ and $\beta_1$ such that if $f, \mu$ and $F$ in $(\ref{paraAf})$, $(\ref{paraAmu})$ and $(\ref{paraAF})$ for $\Omega_T:=Q_1^\ast$, respectively, satisfy  
\begin{equation}\label{A2}
\|f\|_{L^p(Q_1^\ast)}\vee\|\mu\|_{L^q(Q_1^\ast)}\vee\|g^\ast\|_{L^p(Q_1^\ast)}\leq \delta_2\end{equation}
and
\begin{equation}\label{A3}
\|\Psi\|_{L^\infty(Q_r^\ast)}\vee\sup_{(x, t)\in Q_r^\ast} |\Phi(x, t)-\Phi(0, 0)-\langle D\Phi(0, 0), x\rangle|\leq \delta_2r^{1+\beta_1} 
\end{equation}
\[
\le(\mbox{resp., }\|\Phi\|_{L^\infty(Q_r^\ast)}\vee \sup_{(x, t)\in Q_r^\ast} |\Psi(x, t)-\Psi(0, 0)-\langle D\Psi(0, 0), x\rangle|\leq \delta_2 r^{1+\beta_1}\ri)
\]
for $r\in(0, 1]$, then, 
for any $L^p$-viscosity solution $u\in C(Q_1^\ast)$ of 
\begin{equation}\label{E1}
\min\{\max\{u_t+F(x, t, Du, D^2u)+G(x, t, Du, D^2u)-f, u-\Psi\}, u-\Phi\}=0
\end{equation}
in $Q_1^\ast$ satisfying 
\begin{equation*}
\inf_{Q_1^\ast} u\geq -1, \quad\mbox{and}\quad u(0, 0)=\Psi(0, 0)=0
\end{equation*}
\[
\le(\mbox{resp., }\sup_{Q_1^\ast} u\leq 1,\quad \mbox{and}\quad u(0, 0)=\Phi(0, 0)=0\ri), 
\]
 it follows that 

\[
\inf_{Q_\nu^\ast} u\geq -\nu^{1+\beta_1} \quad\le(\mbox{resp., }\sup_{Q_\nu^\ast} u\leq \nu^{1+\beta_1}\ri).\]
\end{lem}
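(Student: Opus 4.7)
The plan is to prove Lemma \ref{lemflat} by a compactness/contradiction argument modeled on Lemma 2.1 of \cite{Sh}. The two cases (upper vs.\ lower obstacle touching) are symmetric, so I focus on the upper case $u(0,0)=\Psi(0,0)=0$. Suppose by contradiction that the conclusion fails: for each $k\in\N$ one can find $F_k, G_k, f_k, \mu_k, g_k^\ast, \Phi_k, \Psi_k$ satisfying \eqref{A2} and \eqref{A3} with $\delta_2 = 1/k$, together with $L^p$-viscosity solutions $u_k$ of \eqref{E1} with $u_k(0,0)=\Psi_k(0,0)=0$, $\inf_{Q_1^\ast}u_k\geq -1$, and yet $\inf_{Q_\nu^\ast} u_k < -\nu^{1+\beta_1}$, where $\nu\in(0,1)$ is to be chosen later depending only on universal constants.

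Next, I would pass to a limit. By Proposition \ref{prop:bounded} we have $\Phi_k\leq u_k\leq \Psi_k$, and \eqref{A3} forces $\|\Psi_k\|_{L^\infty(Q_1^\ast)}\to 0$ and $\Phi_k\to \ell$ in $C^0(Q_1^\ast)$ for an affine function $\ell(x)=\Phi_\infty(0,0)+\langle b_\infty,x\rangle\leq 0$, after passing to a subsequence. Using Proposition \ref{prop:def} to produce truncated sub/supersolutions of extremal equations with data forced to be small by \eqref{A2}, and applying Proposition \ref{prop:Holder} to those truncations, I obtain uniform parabolic Hölder estimates on $u_k$ in each $Q_r^\ast\Subset Q_1^\ast$. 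After further subsequence extraction, $u_k\to u_\infty$ locally uniformly and $F_k(0,0,0,\cdot)\to F_\infty(\cdot)$ locally uniformly on $S^n$ with $F_\infty$ satisfying the Pucci bounds and $F_\infty(O)=0$; the smallness of $f_k,\mu_k,g_k^\ast$ and $\theta_k$ in \eqref{A2} removes all lower-order terms in the limit, exactly as in the proof of Lemma \ref{prop:approximate}. By the $L^p$-viscosity stability theorem (Theorem 6.1 in \cite{CKS}) combined with Proposition \ref{prop:bounded}, $u_\infty$ is a $C$-viscosity solution of
\[
\min\bigl\{\max\{(u_\infty)_t+F_\infty(D^2u_\infty),\,u_\infty\},\,u_\infty-\ell\bigr\}=0\quad\mbox{in }Q_1^\ast,
\]
with $\ell\leq u_\infty\leq 0$, $u_\infty(0,0)=0$, and $\inf_{Q_\nu^\ast}u_\infty\leq -\nu^{1+\beta_1}$.

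Now I exploit the structure of the limit problem. Both obstacles $\ell$ and $0$ are affine in $x$ and independent of $t$, hence both satisfy $w_t+F_\infty(D^2w)=0$ classically. Applying Proposition \ref{prop:def} to $u_\infty$ with the affine test functions $\ell$ (from below) and $0$ (from above), I find that $u_\infty$ is simultaneously a $C$-viscosity subsolution of $u_t+\P^-(D^2u)=0$ and supersolution of $u_t+\P^+(D^2u)=0$ in $Q_1^\ast$. Using that $u_\infty\leq 0=\Psi_\infty$ with equality at $(0,0)$, standard $C^{1,1}$-growth estimates at contact points for obstacle problems with $C^{1,1}$ obstacles of Pucci-type equations (obtainable by comparison with an explicit paraboloid barrier built on top of $\Psi_\infty\equiv 0$, which is itself a solution of $u_t+F_\infty(D^2u)=0$) yield a universal constant $C_\ast=C_\ast(n,\L,\l)$ with
\[
|u_\infty(x,t)|\leq C_\ast r^2\quad\mbox{for }(x,t)\in Q_r^\ast,\ r\in(0,1/2].
\]
Choosing $\nu$ so small that $C_\ast\nu^2<\nu^{1+\beta_1}$, which is possible since $\beta_1<1$, we obtain $\inf_{Q_\nu^\ast}u_\infty > -\nu^{1+\beta_1}$, contradicting the previous inequality.

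The main obstacle in this program is the universal quadratic growth estimate used at the contact point. While the compactness step and the identification of the limit equation follow closely the template of Lemma \ref{prop:approximate} and are essentially routine, the $C^{1,1}$-type bound at $(0,0)$ is the heart of the matter: it must be universal (depending only on $n, \l, \L$) and must exploit that, in the limit, the upper obstacle has become a solution of the homogeneous extremal equation. The standard way to get it is to construct, in a neighbourhood of $(0,0)$, an explicit parabolic barrier of the form $w(x,t)=-A|x|^2+Bt$ with $A, B$ chosen from the Pucci bounds so that $w_t+\P^+(D^2 w)\leq 0$ on the one hand and $w\leq u_\infty$ on $\partial_p Q_{1/2}^\ast$ on the other, and then invoke the comparison principle for supersolutions of $u_t+\P^+(D^2u)=0$. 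Verifying this barrier construction, rather than the compactness argument, is the delicate step.
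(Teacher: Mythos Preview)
Your compactness--contradiction framework is the same as the paper's, and the passage to the limit together with the identification of $u_\infty$ as a subsolution of $u_t+\P^-(D^2u)=0$ and a supersolution of $u_t+\P^+(D^2u)=0$ is correct. The gap is exactly where you locate it, but the barrier you propose does not close it, and the missing ingredient is the strong maximum principle rather than a single comparison.

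With $w(x,t)=-A|x|^2+Bt$ and $w_t+\P^+(D^2w)=B+2An\L\leq 0$ one is forced to take $B<0$, whence $w(0,-\tfrac14)=-\tfrac{B}{4}>0\geq u_\infty(0,-\tfrac14)$: the inequality $w\leq u_\infty$ already fails at the bottom of $\p_p Q_{1/2}^\ast$, so comparison on the full cylinder is unavailable. More to the point, on $\p_p Q_{1/2}^\ast$ the only information is $u_\infty\geq -1$, so any subsolution you can guarantee to lie below $u_\infty$ there must satisfy $w\leq -1$ on the whole parabolic boundary; the maximum principle then forces $w\leq -1$ throughout $Q_{1/2}^\ast$, and no quadratic growth at the origin can come out. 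The ``standard $C^{1,1}$ growth at contact points'' you invoke is, in the parabolic setting, a statement about the \emph{backward} cylinder $Q_r$; the forward-in-time bound is precisely the nontrivial part (and is the content of \cite{Sh}).

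The paper's route is to use the \emph{strong maximum principle} first: $u_\infty\leq 0$ is a subsolution of $u_t+\P^-(D^2u)=0$ in $Q_{1/2}$ attaining its maximum $0$ at the top point $(0,0)$, hence $u_\infty\equiv 0$ in $Q_{1/2}$. This already yields the contradiction when the limiting domain is purely backward. When there is a forward component, the identity $u_\infty(\cdot,0)=0$ supplies the missing bottom boundary datum, and one compares with $\eta(x,t)=-4|x|^2-8n\L t$ (for which $\eta_t+\P^+(D^2\eta)=0$, $\eta\leq -1$ on the lateral boundary, and $\eta\leq 0=u_\infty$ on $\{t=0\}$) \emph{only} on the forward slab $B_{1/2}\times(0,R_\infty^\ast/8]$, obtaining $u_\infty\geq -4(1+2n\L R_\infty^\ast)\nu^2$ there and hence the contradiction for $\nu$ small.

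A secondary omission: the domain $Q_1^\ast=B_1\times(-1,1\wedge R^\ast]$ depends on the parameter $R^\ast\geq 0$, and the lemma requires $\delta_2,\nu$ independent of it. Along the contradicting sequence $R^\ast=R_k^\ast$ must be allowed to vary; the paper extracts a subsequence with $1\wedge R_k^\ast\to R_\infty^\ast\in[0,1]$ and splits into the two cases above. Your write-up treats $Q_1^\ast$ as fixed, which hides this case distinction.
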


\begin{rem}
We will choose $\Phi$ and $\Psi$ in (\ref{5.3phi}) and (\ref{5.3psi}), respectively.	
\end{rem}

\begin{proof}
We only show a proof when $u(0, 0)=\Psi(0, 0)$ because the other one can be shown similarly.  

We argue by contradiction. Thus, suppose that there are $\zeta_{k}^\ast\in\R^n$, $R_{k}^\ast>0$, $\Phi_k$, $\Psi_k\in C^{1, \beta_1}(Q_{1,k}^\ast)$ with $\Phi_k\leq \Psi_k$ in $Q_{1,k}^\ast$, $u_k\in C(Q_{1,k}^\ast)$, $f_k\in L^p(Q_{1,k}^\ast)$, $\mu_k\in L^q(Q_{1,k}^\ast)$ and $F_k: Q_{1,k}^\ast\ti\R^n\ti S^n\to\R$ satisfying (\ref{paraAF}) with $\Omega_T=Q_{1,k}^\ast$; 
\begin{equation*}
\begin{cases}
&F_k(x, t, 0, O)=0, \mbox{ and}\\
&\P^-(X-Y)-\mu_k(x, t)|\xi-\zeta|\leq F_k(x, t, \xi, X)-F_k(x, t, \zeta, Y)\\
&\leq \P^+(X-Y)+\mu_k(x, t)|\xi-\zeta|
\end{cases}
\end{equation*}
for $(x, t)\in Q_{1,k}^\ast$, $\xi$, $\zeta\in\R^n$, $X$, $Y\in S^n$ (for $k\in\N$) such that  
\begin{equation}\label{eq:A1}
\|f_k\|_{L^p(Q_{1,k}^\ast)}\vee\|\mu_k\|_{L^q(Q_{1,k}^\ast)}\vee \|g_{k}^\ast\|_{L^p(Q_{1,k}^\ast)}\vee c_k\vee d_k\leq\fr{1}{k},
\end{equation}
\begin{equation}\label{eq:A2}
\inf_{Q_{1,k}^\ast} u_k\geq -1,
 \end{equation} 
 $u_k(0, 0)=\Psi_k(0, 0)=0$,
and
\begin{equation}\label{eq:A3}
\inf_{Q_{\nu,k}^\ast} u_k<-\nu^{1+\beta_1}\quad\mbox{for any }\nu\in(0, 1),
\end{equation}
where 
\begin{align*}
Q_{r, k}^\ast&:=B_r\ti\le(-r^2, r^2\wedge( R_{k}^\ast r^2)\ri]\quad\mbox{for }r\in(0, 1],\\
  g_{k}^\ast(x, t)&:=\sup\le\{|F_k(x, t, \xi+\zeta_{k}^\ast, X)-F_k(x, t, \xi, X)| : \xi\in\R^n, X\in S^n\ri\},\\
c_k&:=\sup_{r\in(0, 1]} \fr{1}{r^{1+\beta_1}}\|\Psi_k\|_{L^\infty(Q_{r, k}^\ast)},\\
d_k&:=\sup_{r\in(0, 1]}\sup_{(x, t)\in Q_{r, k}^\ast} \fr{1}{r^{1+\beta_1}}|\Phi_k(x, t)-\Phi_k(0, 0)-\langle D\Phi_k(0, 0), x\rangle|.  
\end{align*}
Moreover, by setting 
\[
 G_k(x, t, \xi, X):=F_k(x, t, \xi+\zeta_{k}^\ast, X)-F_k(x, t, \xi, X),\]
 $u_k$ are $L^p$-viscosity solutions of (\ref{E1}) in $Q_{1, k}^\ast$, where $F$, $G$, $f$, $\Phi$ and $\Psi$ are replaced, respectively, by $F_k$, $G_k$, $f_k$, $\Phi_k$ and $\Psi_k$. 
 
Since $u_k\leq \Psi_k$ in $Q_{1,k}^{\ast}$, it follows by (\ref{eq:A1}) and (\ref{eq:A2}) that
 \begin{equation}\label{eq:B1}
\|u_k\|_{L^\infty(Q_{1,k}^\ast)}\leq 1.
 \end{equation} 
Now, for $r\in(0, 1]$, 
 by Proposition \ref{prop:def}, we see that 
\[
u_k\vee\le(\Phi_k(0, 0)+\langle D\Phi_k(0, 0), x\rangle+d_k r^{1+\beta_1}\ri)\quad\mbox{and}\quad u_k\wedge (-c_k r^{1+\beta_1})\] are an $L^p$-viscosity subsolution and an $L^p$-viscosity supersolution, respectively, of 
\[
u_t+\P^-(D^2u)-\mu_k|Du|-g_{k}^\ast-f^+_k= 0 \quad\mbox{in }Q_{r,k}^\ast\]
and
\[u_t+\P^+(D^2u)+\mu_k|Du|+g_{k}^\ast+f^-_k= 0 \quad\mbox{in }Q_{r,k}^\ast.\]
 Hence, by the same argument as in Lemma \ref{paralemlocal}, recalling (\ref{eq:A1}) and (\ref{eq:B1}), we can find $\hat{\gamma}\in(0, 1)$ and $\tilde C_0>0$, independent of $k$, such that 
 \begin{equation*}
 \|u_k\|_{C^{\hat{\gamma}}(\ol{Q}_{\fr12,k}^\ast)}\leq \tilde C_0.
\end{equation*}

Because $1\wedge R_{k}^\ast\in [0, 1]$, we may suppose that 
\[
1\wedge R_{k}^\ast\to R_\infty^\ast\quad\mbox{as }k\to\infty\mbox{ for some } R_\infty^\ast\in[0, 1].
\]

$\ul{\mbox{Case I}: R_\infty^\ast =0.}$ 
Using Ascoli-Arzel\`a theorem, we may suppose that $u_k\to u_\infty$ in $C(\ol{Q}_{\fr12})$ as $k\to\infty$ for some $u_\infty\in C(\ol{Q}_{\fr12})$. By (\ref{eq:A1}), (\ref{eq:A2}) and (\ref{eq:A3}), sending $k\to\infty$, we may have 
\begin{equation}\label{eq:B2}
-1\leq u_\infty\leq 0\quad \mbox{in } \ol{Q}_{\fr12},\quad\mbox{and}\quad\inf_{Q_\nu} u_\infty\leq -\nu^{1+\beta_1}\quad\mbox{for }\nu\in\le(0, \fr12\ri].
\end{equation}
Next, setting
\[
\ol{u}_{k}:=u_k\vee\le(\Phi_k(0, 0)+\langle D\Phi_k(0, 0), x\rangle+\fr{d_k}{2^{1+\beta_1}}\ri),\mbox{ and }\ \ul{u}_{k}:=u_k\wedge \le(-\fr{c_k}{2^{1+\beta_1}}\ri),
\]
we claim that $\ol{u}_{k}$ and $\ul{u}_{k}$ converge to $u_\infty$ uniformly in $\ol{Q}_{\fr12}$.  
Indeed, since for $(x, t)\in \ol{Q}_{\fr12}$, 
\begin{align*}
0\leq (\ol{u}_{k}-u_k)(x, t) &= \le(\Phi_k(0, 0)+\langle D\Phi_k(0, 0), x\rangle+\fr{d_k}{2^{1+\beta_1}}-u_k(x, t)\ri)^+\\
&\leq \le(-\Phi_k(x, t)+\Phi_k(0, 0)+\langle D\Phi_k(0, 0), x\rangle+\fr{d_k}{2^{1+\beta_1}}\ri)^+\\
&\leq 2^{-\beta_1}d_k,
\end{align*}
it follows that 
\begin{align*}
\|\ol{u}_{k}-u_\infty\|_{L^\infty(\ol{Q}_{\fr 1 2})}&\leq \|\ol{u}_{k}-u_k\|_{L^\infty(\ol{Q}_{\fr 1 2})}+\|u_k-u_\infty\|_{L^\infty(\ol{Q}_{\fr 1 2})}\\
&\leq 2^{-\beta_1}d_k +\|u_k-u_\infty\|_{L^\infty(\ol{Q}_{\fr 1 2})}. 
\end{align*}
In contrast, since for $(x, t)\in \ol{Q}_{\fr12}$, 
\begin{align*}
0\leq (u_k-\ul{u}_{k})(x, t) &= \le(u_k(x, t)+\fr{c_k}{2^{1+\beta_1}}\ri)^+\leq \le(\Psi_k(x, t)+\fr{c_k}{2^{1+\beta_1}}\ri)^+\leq 2^{-\beta_1}c_k,
\end{align*}
we have
\begin{align*}
\|\ul{u}_{k}-u_\infty\|_{L^\infty(\ol{Q}_{\fr12})}\leq 2^{-\beta_1}c_k +\|u_k-u_\infty\|_{L^\infty(\ol{Q}_{\fr12})}.
\end{align*}
Hence, $\ol{u}_{k}\to u_\infty$, $\ul{u}_{k}\to u_\infty$ uniformly in $\ol{Q}_{\fr12}$ as $k\to\infty$.  
 
 Passing to the limit, by Theorem 6.1 in \cite{CKS}, we see that $u_\infty$ is a $C$-viscosity subsolution and a $C$-viscosity supersolution, respectively, of
\begin{equation}\label{eq:B21/2}
u_t+\P^-(D^2u)= 0\quad\mbox{in }Q_{\fr12}, \quad\mbox{and}\quad u_t+\P^+(D^2u)= 0\quad\mbox{in }Q_{\fr12}. 
\end{equation}
Because $-u_\infty\geq 0$ in $Q_{\fr12}$ and $-u_\infty(0, 0)=0$, the strong maximum principle yields \begin{equation}\label{eq:B3}
-u_\infty\equiv 0\quad\mbox{in } Q_{\fr12},
\end{equation}
which contradicts $(\ref{eq:B2})$.

 $\ul{\mbox{Case II}: R_\infty^\ast >0.}$ Set $Q':=B_{\fr12}\ti(-\fr14, \fr{R_\infty^\ast}{8}]$. Using Ascoli-Arzel\`a theorem, we may suppose that $u_k\to u_\infty$ in $C(\ol{Q}')$ as $k\to\infty$ for some $u_\infty\in C(\ol{Q}')$. By (\ref{eq:A1}), (\ref{eq:A2}) and (\ref{eq:A3}), sending $k\to\infty$, we may have 
\begin{equation}\label{eq:B2'}
-1\leq u_\infty\leq 0\ \mbox{ in } \ol{Q}',\quad\mbox{and}\quad\inf_{B_\nu\ti(-\nu^2, R_\infty^\ast\nu^2]} u_\infty\leq -\nu^{1+\beta_1}\ \mbox{for }\nu\in\le(0, \fr{1}{2\sqrt{2}}\ri].
\end{equation}
As in Case I, since $u_\infty$ is also a $C$-viscosity subsolution and a $C$-viscosity supersolution of (\ref{eq:B21/2}) in $Q'$, it follows that (\ref{eq:B3}) holds.

Now, setting
\[
\eta(x, t):=-4|x|^2-8n\L t,
\]
we observe that 
\[
\eta_t+\P^+(D^2\eta)=0\quad\mbox{in } B_{\fr12}\ti\le(0, \fr{R_\infty^\ast}{8}\ri],\]
\[
\eta\leq -1\quad\mbox{on }\p B_{\fr12}\ti\le(0, \fr{R_\infty^\ast}{8}\ri),\quad\mbox{and}\quad \eta\leq 0\quad\mbox{in }B_{\fr12}\ti \{0\}.
\]
Hence, because $\eta\leq u_\infty$ in $\p_p (B_{\fr12}\ti(0, \fr{R_\infty^\ast}{8}])$, the comparison principle yields $\eta\leq u_\infty$ in $B_{\fr12}\ti(0, \fr{R_\infty^\ast}{8}]$. Moreover, by (\ref{eq:B3}) and (\ref{eq:B2'}), for $\nu\in(0, \fr{1}{2\sqrt{2}}]$, we have
\[
-\nu^{1+\beta_1}\geq\inf_{B_\nu\ti(-\nu^2, R_\infty^\ast\nu^2]}u_\infty=\inf_{B_\nu\ti(0, R_\infty^\ast\nu^2]}u_\infty \geq -4(1+2n\L R_\infty^\ast)\nu^{2}.
\]
Therefore, for $\nu$ small enough such that $4(1+2n\L R_\infty^\ast)\nu^{1-\beta_1}<1$, we obtain a contradiction. 
\end{proof}

Now, we shall show our proof of Lemma \ref{leminductive}. 

\begin{proof}[Proof of Lemma \ref{leminductive}]
We only consider the estimate near $C^+[u]$ because the other one can be shown similarly. 

We fix $(x_0, t_0)\in C^+[u]\cap \O^\e_T$; $(u-\psi)(x_0, t_0)=0$. 
For $\rho\in(0, \e]$, which will be fixed later, denoting $\hat{Q}_s$ by $Q_s^\ast$ with $R^\ast=\fr{T-t_0}{\rho^2}$;
\[
\hat{Q}_s:=B_s\ti\le(-s^2, s^2\wedge\le(\fr{T-t_0}{\rho^2}s^2\ri)\ri] \quad\mbox{for }s\in(0, 1],
\]
and setting
\[
\hat{u}(x, t):=u(x_0+\rho x, t_0+\rho^2 t)-\psi(x_0, t_0)-\langle D\psi(x_0, t_0),  \rho x\rangle\quad
\mbox{for } (x, t)\in \hat{Q}_1,\]
we see that $\hat{u}$ is an $L^p$-viscosity solution of (\ref{E1}) in $\hat{Q}_1$, where $F$, $G$, $f$, $\varphi$, $\psi$ are replaced, respectively, by 
\begin{align}
\hat{F}(x, t, \xi, X)&:=\rho^2F\le(x_0+\rho x, t_0+\rho^2 t, \fr{1}{\rho}\xi, \fr{1}{\rho^2}X\ri),\nonumber\\
\hat{G}(x, t, \xi, X)&:=\hat{F}(x, t, \xi+\rho D\psi(x_0, t_0), X)-\hat{F}(x, t, \xi, X)\nonumber\\
\hat{f}(x, t)&:=\rho^2f(x_0+\rho x, t_0+\rho^2 t),\nonumber\\
\Phi(x, t)&:=\varphi(x_0+\rho x, t_0+\rho^2 t)-\psi(x_0, t_0)-\langle D\psi(x_0, t_0), \rho x\rangle,\label{5.3phi}\\
\Psi(x, t)&:=\psi(x_0+\rho x, t_0+\rho^2 t)-\psi(x_0, t_0)-\langle D\psi(x_0, t_0), \rho x\rangle.\label{5.3psi}
\end{align}
We notice that $\zeta^\ast$ in $G$ of Lemma \ref{lemflat} corresponds to $\rho D\psi(x_0, t_0)$. 
We note that for $(x, t)\in \hat{Q}_1$, $\xi$, $\zeta\in\R^n$, $X$, $Y\in S^n$, 
\begin{equation*}
\begin{cases}
&\hat{F}(x, t, 0, O)=0, \mbox{ and}\\
&\P^-(X-Y)-\hat{\mu}(x, t)|\xi-\zeta|\leq \hat{F}(x, t, \xi, X)-\hat{F}(x, t, \zeta, Y)\\
&\leq \P^+(X-Y)+\hat{\mu}(x, t)|\xi-\zeta|,
\end{cases}
\end{equation*}
where $\hat{\mu}(x, t):=\rho \mu(x_0+\rho x, t_0+\rho^2 t)$. Also, since 
\[
|\hat{G}(x, t, \xi, X)|\leq \rho|D\psi(x_0, t_0)|\hat{\mu}(x, t)\quad\mbox{for }(x, t, \xi, X)\in \hat Q_1\ti\R^n\ti S^n,
\]
setting $\hat{g}^\ast(x, t):=\sup\{|\hat{G}(x, t, \xi, X)| : \xi\in\R^n, X\in S^n\}$, we have
\[
\|\hat g^\ast\|_{L^p(\hat Q_1)}\leq \rho^{1+\beta_0}|D\psi(x_0, t_0)|\|\mu\|_{L^p(\O_T)}.
\]
Standard calculations give
\[
\|\hat{f}\|_{L^{p}(\hat{Q}_1)}\leq \rho^{1+\beta_0}\|f\|_{L^p(\Omega_T)}, \quad \|\hat{\mu}\|_{L^q(\hat{Q}_1)}\leq \rho^{1-\fr{n+2}{q}}\|\mu\|_{L^q(\Omega_T)},\]
and
\[\|\Psi\|_{L^\infty(\hat{Q}_r)}\vee \sup_{(x, t)\in \hat{Q}_r}|\Phi(x, t)-\Phi(0, 0)-\la D\Phi(0, 0), x\ra|\leq \hat C_1 \rho^{1+\beta_1}r^{1+\beta_1}\]
for $r\in(0, 1]$, where $\hat C_1$ is a constant depending only on $\|\varphi\|_{C^{1, \beta_1}(\O_T)}$ and $\|\psi\|_{C^{1, \beta_1}(\O_T)}$.
  Thus, we can choose small $\rho\in(0, \e]$ such that
\begin{equation}\label{pf:A1}
\|\hat{f}\|_{L^p(\hat{Q}_1)}\vee\|\hat{\mu}\|_{L^q(\hat{Q}_1)}\vee\|\hat{g}^\ast\|_{L^p(\hat{Q}_1)}\leq \delta_2
\end{equation}
and 
\begin{equation}\label{pf:A2}
\|\Psi\|_{L^\infty(\hat{Q}_r)}\vee\sup_{(x, t)\in \hat{Q}_r} |\Phi(x, t)-\Phi(0, 0)-\langle D\Phi(0, 0), x\rangle|\leq \delta_2 r^{1+\b_1}
\end{equation}
for $r\in(0, 1]$, 
where $\delta_2=\delta_2(n, \L, \l, p, q, \beta_1)\in(0, 1)$ is from Lemma \ref{lemflat}. By Lemma \ref{paralemlocal}, there exist $\gamma=\gamma(\e)\in(0, 1)$ and $K=K(\e)>0$ such that 
\[
\|u(\cdot, \cdot)-u(x_0, t_0)\|_{L^\infty(Q_r^{+}(x_0, t_0)\cap \O_T)}\leq Kr^\gamma\quad\mbox{for }r\in(0, \e].
\]
Hence, we can choose smaller $\rho>0$, if necessary, such that
\begin{equation}\label{eq:C1B1}
\inf_{\hat{Q}_1}\hat{u}\geq -1.
\end{equation}


Next, in order to show the assertion of Lemma \ref{leminductive}, we notice that it is sufficient to prove that 
\begin{equation}\label{Iteration}
\inf_{\hat{Q}_{\nu^k}}\hat{u}\geq -\nu^{k(1+\beta_2)}\quad\mbox{for }k\geq 0,
\end{equation}
where $\nu\in(0, 1)$ is the constant in Lemma \ref{lemflat}.

For $k=0$, by (\ref{eq:C1B1}), the inequality (\ref{Iteration}) holds for $k=0$. 
Assuming that 
\[
\inf_{\hat{Q}_{\nu^k}}\hat{u}\geq -\nu^{k(1+\beta_2)}, 
\]
we shall prove 
\[\inf_{\hat{Q}_{\nu^{k+1}}}\hat{u}\geq -\nu^{(k+1)(1+\beta_2)}.\]

Setting
\[
 u_k(x, t):=\nu^{-k(1+\beta_2)}\hat{u}(\nu^k x, \nu^{2k}t), 
 \]
 we see that $\inf_{\hat{Q}_1}u_k\geq -1$. We also see that $u_k$ is an $L^p$-viscosity solution of (\ref{E1}), where $F$, $G$, $f$, $\Phi$ and $\Psi$ are replaced, respectively, by 
\begin{align*}
F_k(x, t, \xi, X)&:=\nu^{k(1-\beta_2)}\hat{F}(\nu^k x, \nu^{2k}t, \nu^{k\beta_2} \xi, \nu^{k(\beta_2-1)}X),\\
G_k(x, t, \xi, X)&:=\nu^{k(1-\beta_2)}\hat{G}(\nu^k x, \nu^{2k}t, \nu^{k\beta_2} \xi, \nu^{k(\beta_2-1)}X),\\
f_k(x, t)&:=\nu^{k(1-\beta_2)}\hat{f}(\nu^k x, \nu^{2k}t),\\
\Phi_k(x, t)&:=\nu^{-k(1+\beta_2)}\Phi(\nu^k x, \nu^{2k}t),\\ 
\Psi_k(x, t)&:=\nu^{-k(1+\beta_2)}\Psi(\nu^k x, \nu^{2k}t). 
\end{align*}
By (\ref{pf:A1}), (\ref{pf:A2}) and $\beta_2=\b_0\wedge \b_1$, we notice that 
(\ref{A2}) and (\ref{A3}) hold for ($f_k$, $\mu_k$, $g_{k}^\ast$, $\Phi_k$, $\Psi_k$), where
$g_{k}^\ast(x, t):=\sup \{|G_k(x, t, \xi, X)| : \xi\in\R^n, X\in S^n\}.$
 Hence, it follows from Lemma \ref{lemflat} that 
\[
\inf_{\hat{Q}_\nu} u_k\geq -\nu^{1+\beta_1},
\]
which implies (\ref{Iteration}) for $k+1$. 
\end{proof}

We finally prove the local H\"older continuity for the space derivative of $L^p$-viscosity solutions of (\ref{E0}). In what follows, for the $L^p$-viscosity solution $u\in C(\O_T)$ of (\ref{E0}), we use the notation of $\e$-neighborhood of $C^\pm[u]$ for small $\e>0$;
\[
C^\pm_\e[u]:=\{(x, t)\in \O_T^{3\e} : \dist((x, t), C^\pm[u])<\e\}.
\] 

\begin{proof}[Proof of Theorem \ref{Nearcoin}]
 In order to show the assertion, by Proposition \ref{CKS}, we may suppose that $(y, s)$, $(z, \tau)\in C^\pm_{2r_1}[u]$. Furthermore, in view of Lemma \ref{leminductive}, we may suppose that $\dist((y, s), C^\pm[u])>0$ and $\dist((z, \tau), C^\pm[u])>0$. Without loss of generality, we assume that $\tau\leq s$. Choose $(\hat{y}, \hat{s})$, $(\hat{z}, \hat{\tau})\in C^\pm[u]$ such that $d((y, s), (\hat{y}, \hat{s}))=\dist((y, s), C^\pm[u])$ and $d((z, \tau), (\hat{z}, \hat{\tau}))=\dist((z, \tau), C^\pm[u])$. Thus, we see
 \[
d((y, s), (\hat{y}, \hat{s}))>0\quad\mbox{and}\quad d((z, \tau), (\hat{z}, \hat{\tau}))>0. \]
 
 $\ul{\mbox{Case I: } d((y, s), (z, \tau))<\fr12 d((y, s), (\hat{y}, \hat{s})).}$ 
 Setting 
 \[v(x, t):=u(x, t)-u(\hat{y}, \hat{s})-\la Du(\hat{y}, \hat{s}), x\ra,\] in view of Proposition \ref{CKS}, for any $\beta\in (0, \hat{\beta}\wedge\beta_0)$, we see that 
 \begin{align*}
 |Dv(y, s)-Dv(z, \tau)|\leq C\fr{d((y, s), (z, \tau))^{\beta}}{d((y, s), (\hat{y}, \hat{s}))^{1+\beta}}\|v\|_{L^\infty(Q_{d((y, s), (\hat{y}, \hat{s}))}(y, s))}
\\ 
+Cd((y, s), (z, \tau))^{\beta}d((y, s), (\hat{y}, \hat{s}))^{\beta_0-\beta}. 
 \end{align*}
 This together with 
 \[
 |Dv(y, s)-Dv(z, \tau)|=|Du(y, s)-Du(z, \tau)|,\]
 and
 \[
 \|v\|_{L^\infty(Q_{d((y, s), (\hat{y}, \hat{s}))}(y, s))}\leq \|v\|_{L^\infty(Q^+_{2d((y, s), (\hat{y}, \hat{s}))}(\hat{y}, \hat{s})\cap \O_T)}\leq Cd((y, s), (\hat{y}, \hat{s}))^{1+\beta_2}, \]
 where the last inequality follows from Lemma \ref{leminductive}, 
 yields 
  \begin{align*}
 &|Du(y, s)-Du(z, \tau)|\\ 
 &\leq C d((y, s), (z, \tau))^{\beta}\le(d((y, s), (\hat{y}, \hat{s}))^{\beta_2-\beta}
 +d((y, s), (\hat{y}, \hat{s}))^{\beta_0-\beta}\ri)\\
 &\leq Cd((y, s), (z, \tau))^{\beta}
  \end{align*}
 for $\beta\in (0, \hat{\beta}\wedge\beta_0)\cap (0, \beta_1]$. 
 
  $\ul{\mbox{Case II: } d((y, s), (z, \tau))\geq \fr12 d((y, s), (\hat{y}, \hat{s})).}$ In view of Lemma \ref{leminductive}, since 
  \begin{align*}
d((z, \tau), (\hat{z}, \hat{\tau}))&\leq d((z, \tau), (\hat{y}, \hat{s}))\\
&\leq d((z, \tau), (y, s))+d((y, s), (\hat{y}, \hat{s}))\\
&\leq 3d((z, \tau), (y, s)),
\end{align*}
  we obtain 
  \begin{align*}
  &|Du(y, s)-Du(z, \tau)|\\
  &\leq |Du(y, s)-Du(\hat{y}, \hat{s})|+|Du(\hat{y}, \hat{s})-Du(\hat{z}, \hat{\tau})|+|Du(\hat{z}, \hat{\tau})-Du(z, \tau)|\\
  &\leq C\le\{d((y, s), (\hat{y}, \hat{s}))^{\beta_2}+d((\hat{y}, \hat{s}), (\hat{z}, \hat{\tau}))^{\beta_2}+d((\hat{z}, \hat{\tau}), (z, \tau))^{\beta_2}\ri\}\\
  &\leq Cd((y, s), (z, \tau))^{\beta_2}. 
    \end{align*}
 \end{proof}

 \section{Appendix: Local H\"older continuity of derivatives for elliptic problems}
In this section, we consider the following elliptic bilateral obstacle problems 
\begin{equation}\label{Noeq:1}
\min\{\max\{F(x, Du, D^2u)-f, u-\psi\}, u-\varphi\}=0\quad\mbox{in } \O, 
\end{equation}
where $\O\subset\R^n$ is a bounded domain. Hereafter, under the hypothesis 
\begin{equation}\label{NoApq2}
q\geq p>n, 
\end{equation}
we define $\beta_0\in(0, 1)$ by \[\beta_0=1-\fr n p.\]
Suppose that 
\begin{equation}\label{NoAf}
f\in L^p(\O).
\end{equation}
The structure condition on $F$ is that there exists
\begin{equation}\label{NoAmu}
\mu\in L^q(\O), \quad \mu\geq 0\quad\mbox{in }\O
\end{equation}
such that for $x\in \O$, $\xi$, $\zeta\in\R^n$, $X$, $Y\in S^n$, 
\begin{equation}\label{NoAF}
\begin{cases}
&F(x, 0, O)=0, \mbox{ and}\\
&\P^-(X-Y)-\mu(x)|\xi-\zeta|\leq F(x, \xi, X)-F(x, \zeta, Y)\\
&\leq \P^+(X-Y)+\mu(x)|\xi-\zeta|.
\end{cases}
\end{equation}
For obstacles $\varphi$ and $\psi$, we suppose that 
\begin{equation}\label{NoAob}
  \varphi\leq \psi\quad\mbox{in }\O, \quad\mbox{and}\quad \varphi, \psi\in C^{1, \beta_1}(\O) \quad\mbox{for }\beta_1\in(0, 1). 
\end{equation}

Under the above hypotheses, we prove that the first derivative of $L^p$-viscosity solutions $u$ of (\ref{Noeq:1}) is H\"older continuous with the H\"older exponent $\b_0\wedge \beta_1$ near the coincidence set without assuming that $\varphi<\psi$ in $\O$. 

\begin{lem}
Assume $(\ref{NoApq2})$, $(\ref{NoAf})$, $(\ref{NoAmu})$, $(\ref{NoAF})$ and $(\ref{NoAob})$. 
Then, for small $\e>0$, there exists $\hat C_2=\hat C_2(\e)>0$ such that if $u\in C(\O)$ is an $L^p$-viscosity solution of $(\ref{Noeq:1})$, and if $x_0\in \O^\e$ satisfies
\[u(x_0)=\varphi(x_0)\quad (\mbox{resp., } u(x_0)=\psi(x_0)), \]
then it follows that 
$$
|u(x)-u(x_0)-\la D\varphi(x_0),x-x_0\ra |\leq \hat C_2r^{1+\b_2}$$
$$
\le( \mbox{resp., } |u(x)-u(x_0)-\la D\psi (x_0),x-x_0\ra |\leq \hat C_2r^{1+\b_2}\ri)
$$
for $x\in B_r (x_0)$, where 
$$
\b_2:=\b_0\wedge \b_1. 
$$
In particular, $u$ is differentiable at $x_0$, and 
$$Du(x_0)=D\varphi (x_0)\quad (\mbox{resp., }Du(x_0)=D\psi (x_0)).$$ 
\end{lem}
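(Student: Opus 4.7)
The plan is to follow the parabolic proof of Lemma \ref{leminductive}, with the simplifications afforded by the elliptic setting (no time variable, and in particular no case split on $R^\ast$ as in Lemma \ref{lemflat}). I consider only $u(x_0)=\varphi(x_0)$, the other case being symmetric, and translate so that $x_0=0$. I then rescale by a small parameter $\rho\in(0,\e]$: setting
\[
\hat u(x):=u(\rho x)-\varphi(0)-\la D\varphi(0),\rho x\ra,\qquad x\in B_1,
\]
together with $\hat F(x,\xi,X):=\rho^2 F(\rho x,\rho^{-1}\xi,\rho^{-2}X)$, $\hat G(x,\xi,X):=\hat F(x,\xi+\rho D\varphi(0),X)-\hat F(x,\xi,X)$, $\hat f(x):=\rho^2 f(\rho x)$, $\Phi(x):=\varphi(\rho x)-\varphi(0)-\la D\varphi(0),\rho x\ra$ and $\Psi(x):=\psi(\rho x)-\varphi(0)-\la D\varphi(0),\rho x\ra$, the function $\hat u$ is an $L^p$-viscosity solution of the obstacle problem associated with $(\hat F+\hat G,\hat f,\Phi,\Psi)$ on $B_1$. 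Using $f\in L^p$, $\mu\in L^q$, $\varphi,\psi\in C^{1,\b_1}$, and the elliptic interior H\"older estimate (the elliptic analog of Proposition \ref{prop:Holder}) to normalise $\inf_{B_1}\hat u\geq -1$, I choose $\rho$ so small that for every $r\in(0,1]$,
\[
\|\hat f\|_{L^p(B_1)}\vee\|\hat\mu\|_{L^q(B_1)}\vee\|\hat g^{\ast}\|_{L^p(B_1)}\leq \d
\]
and $\|\Phi\|_{L^\infty(B_r)}\vee \sup_{x\in B_r}|\Psi(x)-\Psi(0)-\la D\Psi(0),x\ra|\leq \d r^{1+\b_1}$, where $\d>0$ is the small parameter of the approximation lemma below.

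The core of the proof is an inductive iteration showing
\begin{equation}\label{app:iter}
\inf_{B_{\nu^k}}\hat u\geq -\nu^{k(1+\b_2)}\quad\text{for all }k\geq 0,
\end{equation}
where $\nu\in(0,1)$ is a universal constant supplied by the approximation lemma. The base case $k=0$ is just the normalisation; the inductive step rescales $u_k(x):=\nu^{-k(1+\b_2)}\hat u(\nu^k x)$, for which the new data $F_k,G_k,f_k,\Phi_k,\Psi_k$ still satisfy the smallness hypotheses (this is where $\b_2\leq \b_0$ and $\b_2\leq \b_1$ are used in exactly the same way as in the proof of Lemma \ref{leminductive}). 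Applying the approximation lemma to $u_k$ with $\Psi_k(0)$ replaced by the appropriate tangent flat obstacle yields $\inf_{B_\nu}u_k\geq -\nu^{1+\b_1}\geq -\nu^{1+\b_2}$, which gives \eqref{app:iter} for $k+1$. The desired Taylor expansion $|u(x)-u(x_0)-\la D\varphi(x_0),x-x_0\ra|\leq \hat C_2 r^{1+\b_2}$ then follows by choosing $k$ so that $\rho\nu^{k+1}\leq |x-x_0|\leq \rho\nu^k$, together with the one-sided upper bound $u\leq \psi$ and the $C^{1,\b_1}$ regularity of $\psi$ (to convert an infimum bound into a two-sided bound). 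Differentiability and $Du(x_0)=D\varphi(x_0)$ are then immediate, e.g.\ from Lemma A.1 in \cite{AP}.

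The main obstacle is proving the elliptic approximation lemma: given smallness of the data and the flatness of the obstacles at the origin, an $L^p$-viscosity solution $u$ with $\inf u\geq -1$ and $u(0)=\Psi(0)=0$ satisfies $\inf_{B_\nu}u\geq -\nu^{1+\b_1}$. This is established by contradiction in the spirit of Case I of Lemma \ref{lemflat}, but entirely in the elliptic setting: take a sequence $u_k,F_k,f_k,\mu_k,\Phi_k,\Psi_k$ violating the conclusion. Using Proposition \ref{prop:def} (in its elliptic version from \cite{KT}), the truncations $u_k\vee(\Phi_k(0)+\la D\Phi_k(0),x\ra+2^{-(1+\b_1)}d_k)$ and $u_k\wedge(-2^{-(1+\b_1)}c_k)$ become $L^p$-viscosity sub- and supersolutions, respectively, of elliptic extremal equations with right-hand sides tending to $0$ in $L^p$, hence share a uniform H\"older modulus and converge (by Ascoli--Arzel\`a) to a single limit $u_\infty$ with $-1\leq u_\infty\leq 0$ in $B_{1/2}$ and $u_\infty(0)=0$. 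The stability result in \cite{CCKS} promotes $u_\infty$ to a $C$-viscosity sub- and supersolution of $\P^-(D^2 v)=0$ and $\P^+(D^2 v)=0$; the strong maximum principle then forces $u_\infty\equiv 0$, contradicting the quantitative bound $\inf_{B_\nu}u_k<-\nu^{1+\b_1}$. The delicate point is passing the obstacle constraints to the limit, which is precisely what the truncation argument circumvents, exactly as was done in Lemma \ref{lemflat}.
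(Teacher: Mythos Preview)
Your approach is essentially correct and would yield the result, but it takes a genuinely different route from the paper. One presentational slip: you announce the case $u(x_0)=\varphi(x_0)$ and subtract the tangent of $\varphi$, but then normalise $\inf_{B_1}\hat u\geq -1$, iterate on the infimum, and state the approximation lemma with $u(0)=\Psi(0)=0$; those are the conditions and conclusions for the \emph{upper}-obstacle case $u(x_0)=\psi(x_0)$. For the lower-obstacle case the automatic bound is the lower one (from $u\geq\varphi$), and what must be iterated is $\sup_{B_{\nu^k}}\hat u\leq \nu^{k(1+\b_2)}$. This is only a sign/symmetry confusion and does not affect the validity of the method.

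The substantive difference is that the paper's appendix does \emph{not} use an iteration or a compactness-based approximation lemma at all. It gives a direct one-scale argument: with $v(x):=u(x)-\varphi(0)-\la D\varphi(0),x\ra$ on $B_{2r}$ and $\sigma_1(r)=\hat Cr^{1+\b_1}$, define the truncations $v_+:=v\vee\sigma_1(r)$ and $v_-:=v\wedge(\Psi(0)+\la D\Psi(0),x\ra-\sigma_1(r))$, check the elementary inequality $v_+\leq v_-+4\sigma_1(r)$, and then apply the \emph{elliptic} weak Harnack inequality (Proposition 2.4 of \cite{KT}) to the nonnegative supersolution $v_-+4\sigma_1(r)$ together with the local maximum principle (Proposition 2.5 of \cite{KT}) to the nonnegative subsolution $v_+$. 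Because in the elliptic weak Harnack inequality the $L^{\e_0}$-average and the infimum are taken over the \emph{same} ball, these two estimates chain directly to give $|v|\leq Cr^{1+\b_2}$ on $B_{r/2}$ with no iteration and no compactness. This is precisely the trick the paper remarks (just before Lemma \ref{leminductive}) is unavailable in the parabolic case, where $Q_r(0,-3r^2)$ and $Q_r$ in Proposition \ref{paraWeak} are disjoint, forcing the compactness route you reproduce. Your argument has the virtue of being uniform across the elliptic and parabolic settings; the paper's is shorter, more elementary, and dispenses with compactness entirely in the elliptic appendix.
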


\begin{proof}
We consider the case when $x_0\in C^-[u]\cap \O^\e$; $(u-\varphi)(x_0)=0$.  
For simplicity of notations, we shall suppose $x_0=0\in C^-[u]\cap \O^\e.$

Let $0<r<\fr{\e}{2}$. 
Setting 
\[
v(x):=u(x)-\varphi (0)-\la D\varphi (0),x\ra\quad\mbox{for }x\in B_{2r},\]
we see that $v$ is an $L^p$-viscosity solution of 
\begin{equation*}
\min\{\max\{F(x, Dv, D^2v)+G(x, Dv, D^2v)-f, v-\Psi\}, v-\Phi\}=0\quad\mbox{in }B_{2r},
\end{equation*}
where
\begin{align*}G(x, \xi, X)&:=F(x, \xi+ D\varphi(0), X)-F(x, \xi, X),\\
\Phi(x)&:=\varphi(x)-\varphi (0)-\la D\varphi (0),x\ra,\\
\Psi(x)&:=\psi(x)-\varphi (0)-\la D\varphi (0),x\ra.\end{align*}
We note that (\ref{NoAob}) yields 
\[
|\Phi(x)|\leq \sigma_1(r), \quad\mbox{and}\quad \Psi(x)\geq \Psi(0)+\la D\Psi(0), x\ra -\sigma_1(r)
\]
for $x\in B_{2r}$, where $\sigma_1(r)=\hat Cr^{1+\b_1}$ for some $\hat C>0$. 
Setting 
\[
v_+:=v\vee \s_1(r),\quad\mbox{and}\quad v_-:= v\wedge \le(\Psi(0)+\la D\Psi(0), x\ra -\sigma_1(r)\ri),
\]
we claim that 
\begin{equation}\label{Noproperty}
v_+\leq v_-+4\s_1(r) \quad\mbox{in }B_{2r}. 
\end{equation}
Indeed, when $v_-=v$, noting that $v\geq\Phi\geq -\sigma_1(r)$ in $B_{2r}$, we have (\ref{Noproperty}). If $v_-\neq v$, then since for $x\in B_{2r}$,
\begin{align*} 
\Psi(0)+\la D\Psi(0), x\ra&=\psi(0)-\varphi(0)+\la D\psi(0)-D\varphi(0), x\ra\\
&=v(x)-u(x)+\psi(0)+\la D\psi(0), x\ra \\
&\geq v(x)-\psi(x)+\psi(0)+\la D\psi(0), x\ra\\
&\geq v(x)-\sigma_1(r),
 \end{align*}
we have (\ref{Noproperty}). 

In view of Proposition \ref{prop:def}, we observe that $v_-+4\sigma_1(r)$ is a nonnegative $L^p$-viscosity supersolution of 
\[\P^+(D^2u)+\mu|Du|+f^-+|D\varphi(0)|\mu=0\quad\mbox{in } B_{2r}.\] 
Thus, by the weak Harnack inequality (cf. Proposition 2.4 in \cite{KT}) and $v(0)=0$, there are $\e_0, C_0>0$ such that 
\[
r^{-\fr{n}{\e_0}}\|v_-+4\sigma_1(r)\|_{L^{\e_0}(B_r)}\leq C_0\le(4\sigma_1(r)+r^{1+\beta_0}\|f^-+|D\varphi(0)|\mu\|_{L^p(B_{2r})}\ri).
\]
Hence, from our choice of $\b_2$, we have
\begin{equation}\label{Noineq:wh}
r^{-\fr{n}{\e_0}}\|v_-+4\sigma_1(r)\|_{L^{\e_0}(B_r)}\leq Cr^{1+\b_2}. 
\end{equation}

In contrast, we see that $v_+$ is a nonnegative $L^p$-viscosity subsolution of 
\[
\P^-(D^2u)-\mu|Du|-f^+-|D\varphi(0)|\mu=0\quad\mbox{in }B_{2r}.\]
Hence, by the local maximum principle (cf. Proposition 2.5 in \cite{KT}) with the above $\e_0>0$, we have 
$$
\sup_{B_{\fr{r}{2}}}\,v_+\leq C_1\le( r^{-\fr n{\e_0}}\| v_+\|_{L^{\e_0}(B_r)}+r^{1+\b_0}\| f^++|D\varphi(0)|\mu\|_{L^p(B_{2r})}\ri),
$$
for some $C_1=C_1(\e_0)>0$. 
This together with (\ref{Noproperty}) and (\ref{Noineq:wh}) implies that 
\[
-Cr^{1+\b_1}\leq u(x)-\varphi(0)-\la D\varphi(0), x\ra\leq Cr^{1+\b_2}\quad\mbox{in }B_{\fr{r}{2}}, \]
which concludes the proof. 
\end{proof}



\end{document}